\theoremstyle{plain}
\newtheorem{theorem}{Theorem}[section]
\newtheorem{lemma}[theorem]{Lemma}
\newtheorem{corollary}[theorem]{Corollary}
\newtheorem{proposition}[theorem]{Proposition}
\theoremstyle{definition}
\newtheorem{definition}[theorem]{Definition}
\newtheorem{convention}[theorem]{Convention}
\newtheorem{construction}[theorem]{Construction}
\newtheorem{remark}[theorem]{Remark}
\newtheorem{example}[theorem]{Example}
\newtheorem{question}[theorem]{Question}
\let\Sec=\S
\newcommand{\R}{\mathbb{R}}
\newcommand{\N}{\mathbb{N}}
\newcommand{\Z}{\mathbb{Z}}
\newcommand{\D}{\mathbb{D}}
\newcommand{\A}{\mathbb{A}}
\newcommand{\F}{\mathbb{F}}
\newcommand{\T}{\mathbb{T}}
\renewcommand{\S}{\mathbb{S}}
\newcommand{\id}{\mathrm{id}}
\newcommand{\pt}{\mathrm{pt}}
\newcommand{\ex}{\mathrm{ex}}
\newcommand{\st}{\mathrm{st}}
\newcommand{\dual}{\mathrm{dual}}
\newcommand{\perf}{\mathrm{perf}}
\newcommand{\open}{\mathrm{open}}
\newcommand{\hyp}{\mathrm{hyp}}
\newcommand{\cont}{\mathrm{cont}}
\newcommand{\lax}{\mathrm{lax}}
\newcommand{\Space}{\mathcal{S}}
\newcommand{\Sp}{\mathrm{Sp}}
\newcommand{\Ab}{\mathrm{Ab}}
\newcommand{\PSh}{\mathrm{PShv}}
\newcommand{\Shv}{\mathrm{Shv}}
\newcommand{\CoShv}{\mathrm{CoShv}}
\newcommand{\LCH}{\mathrm{LCH}}
\newcommand{\Cat}{\mathrm{Cat}}
\newcommand{\PrL}{\mathrm{Pr}^{\mathrm{L}}}
\newcommand{\PrR}{\mathrm{Pr}^{\mathrm{R}}}
\newcommand{\PB}{\mathrm{PB}}
\newcommand{\CoSys}{\mathrm{CoSys}}
\newcommand{\Corr}{\mathrm{Corr}}
\newcommand{\BCFun}{\mathrm{BCFun}}
\newcommand{\SixFF}{\mathrm{6FF}}
\newcommand{\Calk}{\mathrm{Calk}}
\newcommand{\THH}{\mathrm{THH}}
\newcommand{\TC}{\mathrm{TC}}
\DeclareMathOperator{\Open}{Open}
\DeclareMathOperator{\CAlg}{CAlg}
\DeclareMathOperator{\Fun}{Fun}
\DeclareMathOperator{\Map}{Map}
\DeclareMathOperator{\Hom}{Hom}
\DeclareMathOperator{\Lan}{Lan}
\DeclareMathOperator{\Ind}{Ind}
\DeclareMathOperator{\Loc}{Loc}
\DeclareMathOperator{\op}{op}
\def\colim{\qopname\relax m{colim}}
\begin{document}

\title{Continuous six-functor formalism on locally compact Hausdorff spaces}
\author{Qingchong Zhu}
\date{}

\maketitle

\begin{abstract}
  We show that the functor sending a locally compact Hausdorff space $X$ to the $\infty$-category of spectral sheaves $\Shv(X; \Sp)$ is initial among all continuous six-functor formalisms on the category of locally compact Hausdorff spaces. Here, continuous six-functor formalisms are those valued in dualizable presentable stable $\infty$-categories and satisfying canonical descent, profinite descent, and hyperdescent. As an application, we generalize Efimov’s computation of the algebraic $K$-theory of sheaves to all localizing invariants on continuous six-functor formalisms. Our results show that localizing invariants behave analogously to compactly supported sheaf cohomology theories when evaluated on continuous six-functor formalisms on locally compact Hausdorff spaces.
\end{abstract} 

\begingroup
\small
\tableofcontents
\endgroup

\section{Introduction}

\subsection*{Six‐functor formalism}

In its classical form, a six‐functor formalism organizes the basic operations in (co)homology theories. For example, if $X$ is a locally compact Hausdorff space, then the stable $\infty$‐category $\Shv(X; D(\Z))$ of sheaves of chain complexes on $X$ is symmetric monoidal (the tensor product is the sheafified pointwise tensor) and admits all (co)limits. In this setting, one has the six operations familiar from sheaf theory:

\[
  \otimes,\ \underline{\Hom} \qquad f^* \dashv f_* \qquad f_! \dashv f^!
\]

\begin{itemize}
  \item
    $\otimes$ and $\underline{\Hom}$: The tensor product $\otimes$ and the internal Hom make $\Shv(X; D(\Z))$ into a closed symmetric monoidal category. These are functors $\Shv(X; D(\Z))\times\Shv(X; D(\Z))\to\Shv(X; D(\Z))$ and $\Shv(X; D(\Z))^{\op}\times\Shv(X; D(\Z))\to\Shv(X; D(\Z))$, and serve as operations (1) and (2) of the six-functor formalism.
  \item
    $f_*$ and $f^*$ (direct/inverse image): A continuous map $f:Y\to X$ between locally compact Hausdorff spaces induces a pushforward $f_*: \Shv(Y; D(\Z))\to\Shv(X; D(\Z))$ (direct image) given by $\mathcal{F}\mapsto(\mathcal{F}\circ f^{-1})$, and a left adjoint $f^*: \Shv(X; D(\Z))\to\Shv(Y; D(\Z))$ (inverse image) given by sheafifying the preimage construction. Thus $f^*$ and $f_*$ are functors (3) and (4) of the formalism.
  \item
    $f_!$ and $f^!$ (proper pushforward/exceptional inverse): If $f$ is proper, then $f_*=f_!$. In general one defines a proper pushforward $f_!: \Shv(Y; D(\Z))\to\Shv(X; D(\Z))$ by ``sections with proper support'' (so that for $f:Y\to \pt$ this is $\Gamma_{\mathrm{c}}(Y,-)$) which is the functor (5) of the formalism. This admits a right adjoint $f^!:\Shv(X; D(\Z))\to\Shv(Y; D(\Z))$, the exceptional inverse image (functor (6)). In practice $f^!$ coincides with $f^*$ when $f$ is an open immersion, and $f_!=f_*$ when $f$ is proper.
\end{itemize}

These six operations satisfy the usual compatibilities, i.e., composition, base change and projection formula, forming the classical Grothendieck six-functor package on topological spaces.

Let $X$ be a locally compact Hausdorff space. Within this framework, the standard sheaf (co)homology theories of $X$ with $\Z$ coefficients can be expressed entirely in terms of the six functors as follows:
\[
  \hspace{-5em}
  \begin{aligned}
  \text{(sheaf cohomology)}\quad
  H^*(X,\mathbb{Z})
  &=p_*\,p^*\,\mathbb{Z},\\
  \text{(compactly supported sheaf cohomology)}\quad
  H^*_c(X,\mathbb{Z})
  &=p_!\,p^*\,\mathbb{Z},\\
  \text{(sheaf homology)}\quad
  H_*(X,\mathbb{Z})
  &=p_!\,p^!\,\mathbb{Z},\\
  \text{(locally finite sheaf/Borel--Moore homology)}\quad
  H^{\mathrm{lf}}_*(X,\mathbb{Z})
  &=p_*\,p^!\,\mathbb{Z}.
  \end{aligned}
\]
where $p : X \to \pt$ denotes the unique map to a point, and $\Z \in D(\Z)$ is the constant sheaf on the point.

\subsection*{Modern $\infty$‑categorical formalism}

Scholze \cite{scholze2022sixfunctors}, Liu-Zheng \cite{liu2015gluingrestrictednervesinftycategories}\cite{liu2024enhancedoperationsbasechange} and Mann \cite{mann2022padic} recently have recast the structure of six-functor formalism abstractly in higher categorical terms. One fixes a geometric context $(C,E)$, as in \cite[Appendix A.5]{mann2022padic}, where $C$ is an $\infty$‐category with finite limits (e.g. schemes, stacks, or spaces) and $E$ is a class of ``admissible'' morphisms (e.g. proper, étale maps or just all morphisms in $C$) stable under pullback. One then considers the symmetric monoidal $\infty$‐category $\Corr(C,E)$ of correspondences whose objects are those of $C$, and a morphism $X\to Y$ is given by a span $X\overset{f}{\leftarrow}W\overset{g}\rightarrow Y$ in $C$ with $g\in E$, composed by taking pullback. By definition, a 3‐functor formalism is a lax symmetric monoidal functor
\[
  D: \Corr(C, E)^\otimes \to \Cat_\infty
\]
for $(C,E)$ some geometric context. Unpacking this definition shows that for each $X\in C$, the $\infty$-category $D(X)$ is equipped with an external tensor product $\otimes$, and that for each $f:X\to Y$ in $C$ one has a functor $f^*:D(Y)\to D(X)$, while for each $f\in E$ one has a functor $f_!:D(X)\to D(Y)$, see \cite[Definition 2.4]{scholze2022sixfunctors}. In other words, a 3‐functor formalism produces the operations $\otimes$, $f^*$ and $f_!$ satisfying all evident compatibilities (base change and projection formula) by virtue of the functoriality of $D$.

One then says that $D$ is a six‐functor formalism if the remaining adjoints exist, i.e., the functors $-\otimes X$, $f^*$ and $f_!$ each admit right adjoints \cite[Definition 2.5]{scholze2022sixfunctors}. We summarize the definitions as follows.

\begin{definition}
  A 3‐functor formalism is a lax symmetric monoidal functor $D: \Corr(C, E)^\otimes \to \Cat_\infty$.
\end{definition}

\begin{definition}
  A 6‐functor formalism is a 3‐functor formalism for which $-\otimes X$, $f^*$ and $f_!$ all admit right adjoints.
\end{definition}

In this language, Liu–Zheng’s work \cite{liu2015gluingrestrictednervesinftycategories}\cite{liu2024enhancedoperationsbasechange}  and later Mann’s abstract theory \cite{mann2022padic} give concrete constructions of such functors $D$ from partial data. In practice, one often has two special classes of morphisms $I\subseteq E$ (e.g. open immersions) and $P \subseteq E$ (e.g. proper maps) with the property that any $f: X\to Y$ factors as $X\xrightarrow{i\in I}\bar X\xrightarrow{p\in P}Y$ satisfying extra assumptions\footnote{We omit the mild truncation assumption, which is automatically satisfied for all 1-categories, see the discussion below \cite[Example 4.1]{scholze2022sixfunctors} for the complete statement.}:
\begin{itemize}
  \item
    The classes of morphisms $I$ and $P$ are stable under pullback and composition and contain all equivalences.
  \item
    For any $f\in I$, the functor $f^*$ admits a left adjoint $f_!$ which satisfies base change and the projection formula.
  \item
    For any $f \in P$, the functor $f^*$ admits a right adjoint $f_*$ which satisfies base change and the projection formula.
  \item
    For any Cartesian diagram
    \[\begin{tikzcd}
      {X'} & X \\
      {Y'} & Y
      \arrow["{j'}", from=1-1, to=1-2]
      \arrow["{g'}"', from=1-1, to=2-1]
      \arrow["g", from=1-2, to=2-2]
      \arrow["j", from=2-1, to=2-2]
    \end{tikzcd}\]
    with $j \in I$ (hence $j'\in I$) and $g\in P$ (hence $g'\in P$), the natural map $j_!g'_* \to g_*j'_!$ is an isomorphism.
\end{itemize}

\begin{theorem}[{\cite[Theorem 4.6]{scholze2022sixfunctors}, \cite[Proposition A.5.10]{mann2022padic}, Liu–Zheng}]\label{thm:construction-six-functors}
  In that case, a given assignment $X\mapsto D(X)$ with monoidal structure extends canonically to a lax functor $D:\Corr(C,E)^\otimes\to\Cat_\infty$ by defining
  \[
    f_! = p_* i_!,
  \]
  where $i_!$ (resp.\ $p_*$) is left (resp.\ right) adjoint to $i^*$ (resp. $p^*$).
\end{theorem}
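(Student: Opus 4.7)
The plan is to construct $D$ by gluing two partial formalisms, one encoding the left-adjoint shrieks along $I$ and one encoding the right-adjoint pushforwards along $P$, both extending the given upper-star functor $D^* : C^{\op,\otimes} \to \Cat_\infty$.

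\textbf{Step 1: Two partial formalisms.} I would first construct lax symmetric monoidal functors
\[
    D_I : \Corr(C, I)^\otimes \to \Cat_\infty \quad\text{and}\quad D_P : \Corr(C, P)^\otimes \to \Cat_\infty,
\]
sending $i \in I$ to $i_!$ and $p \in P$ to $p_*$, respectively. For each class separately, the hypotheses supply exactly the data needed to extend $D^*$ along correspondences in that class: an adjoint in the correct direction, stability under pullback and composition, base change, and the projection formula. Both functors then restrict to the same $D^*$ on $C^{\op,\otimes}$.

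\textbf{Step 2: Gluing via the factorization.} On a correspondence $X \xleftarrow{f} W \xrightarrow{g} Y$ with $g \in E$, choose a factorization $g = p i$ with $i \in I$ and $p \in P$, and set
\[
    D\bigl(X \xleftarrow{f} W \xrightarrow{g} Y\bigr) := p_* \circ i_! \circ f^*.
\]
Independence of the factorization reduces, by comparing two factorizations through the fiber product of the intermediate objects, to an instance of the compatibility axiom $j_! g'_* \xrightarrow{\sim} g_* j'_!$. Functoriality under composition of correspondences produces, after refactoring the composed span, a diagram in which one must trade an $i_!$ past a $p_*$ at each Cartesian square arising from the pullback; the same axiom handles each such square.

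\textbf{Step 3: Higher coherence (the main obstacle).} The real difficulty is not the $1$-categorical gluing above --- that is the classical Deligne construction of $f_!$ --- but producing a coherent lax monoidal functor of $\infty$-categories. To control the infinite tower of coherences I would follow Liu-Zheng and model the input data as a map out of a multi-simplicial nerve of $C$ equipped with markings distinguishing $I$-edges and $P$-edges, and then show that the inclusion of this marked nerve into the simplicial set presenting $\Corr(C, E)^\otimes$ is a categorical equivalence in a suitable model structure. A lax monoidal extension then exists and is unique up to contractible ambiguity. Verifying the equivalence reduces, stratum by stratum, to the adjointability axioms for $I$ and $P$ separately and to the single mixed compatibility axiom; the content of the theorem is that these finitely many axioms already saturate all higher coherences, so no additional hypotheses are needed.
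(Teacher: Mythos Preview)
The paper does not give its own proof of this statement: it is quoted as a result from the literature, with explicit attribution to \cite[Theorem 4.6]{scholze2022sixfunctors}, \cite[Proposition A.5.10]{mann2022padic}, and Liu--Zheng, and no argument is supplied beyond the citation. So there is nothing in the paper to compare your proposal against.

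That said, your outline is a reasonable sketch of the Liu--Zheng/Mann strategy that the paper is pointing to. Step~3 correctly identifies where the real content lies: the $1$-categorical gluing of Step~2 is classical, and the substance is the coherent $\infty$-categorical extension. Your description of the mechanism (a marked multisimplicial model for the data, an equivalence of simplicial sets reducing to the adjointability and mixed Beck--Chevalley axioms) matches the shape of the argument in those references, though of course it elides the considerable technical work carried out there. If you intend this as a pointer to the cited proofs rather than a self-contained argument, it is accurate; as a standalone proof it would need the actual construction of the multisimplicial model and the verification of the relevant categorical equivalence, neither of which is short.
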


\subsection*{Universal property of sheaves}

Having exhibited all six standard functors on sheaves on locally compact Hausdorff spaces, a natural question is whether they fit together into an $\infty$-categorical six-functor formalism. To this end, we take all morphisms, the open immersions and the proper maps for the classes $E$, $I$ and $P$, respectively, and consider the assignment
\[
  \LCH^{\op} \to \CAlg(\Cat_\infty): X\mapsto \Shv(X; \Sp)
\]
which sends a locally compact Hausdorff space to its symmetric monoidal $\infty$-category of
$\Sp$-valued sheaves.

The fundamental structural ingredients required by the six-functor formalism, namely the symmetric monoidal structure, base change, projection formula, etc., were established by Volpe in \cite{volpe2023operationstopology}. This leads to the following formulation:

\begin{theorem}[{\cite[Theorem 7.4]{scholze2022sixfunctors}}]
  Under the setup described above, the conditions of \cref{thm:construction-six-functors} are satisfied, and one gets the resulting six-functor formalism\footnote{The statement of \cite[Theorem 7.4]{scholze2022sixfunctors} is formulated for sheaves valued in $D(\Z)$. However, as pointed out there, the entire proof works in a similar way with coefficients in the $\infty$-category of spectra or other coefficients. Alternatively, one can see \cite{volpe2023operationstopology} for a more detailed and general discussion.}
  \[
    X\mapsto \Shv(X; \Sp).
  \]
\end{theorem}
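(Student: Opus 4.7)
The plan is to verify the four bulleted hypotheses immediately preceding \cref{thm:construction-six-functors}, with $C = \LCH$, $E$ the class of all continuous maps, $I$ the class of open immersions, and $P$ the class of proper maps. Once these hold, the theorem canonically extends the base assignment $X \mapsto \Shv(X; \Sp)$, equipped with its pointwise symmetric monoidal structure, to a six-functor formalism via the formula $f_! = p_* i_!$ applied to any open/proper factorization of $f$.

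The formal closure properties are standard point-set topology: open immersions and proper maps in $\LCH$ each contain all homeomorphisms and are stable under composition and under pullback (using in particular that $X$ Hausdorff makes the topological pullback closed in the product, hence $\LCH$). For the required open/proper factorization of a map $f: Y \to X$, I would take $\overline{Y}$ to be the closure of the graph $\Gamma_f \subset Y \times X$ inside $Y^+ \times X$, where $Y^+$ is the one-point compactification. Since $Y^+$ is compact Hausdorff, $\overline{Y}$ is a closed subspace of an $\LCH$ space and is therefore itself $\LCH$; the projection $\overline{Y} \to X$ is proper because $Y^+ \times X \to X$ is; and the graph embedding identifies $Y$ with $\overline{Y} \cap (Y \times X)$, which is open in $\overline{Y}$ because $\Gamma_f$ is already closed in $Y \times X$.

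For the sheaf-theoretic hypotheses I would invoke the results of Volpe \cite{volpe2023operationstopology}. For an open immersion $j$ the inverse image $j^*$ is restriction, which admits a left adjoint $j_!$ (extension by zero); the requisite base change and projection formula for $j_!$ are largely formal, since $j^*$ preserves all colimits and commutes with the external tensor. For a proper map $p$ the direct image $p_*$ is right adjoint to $p^*$, and here proper base change together with the projection formula hold; these constitute the substantive input. The remaining compatibility $j_! g'_* \simeq g_* j'_!$ in a Cartesian square mixing $j \in I$ and $g \in P$ then follows as the mate of the proper base change equivalence $g^* j_! \simeq j'_! (g')^*$.

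The main obstacle, and essentially the sole analytic content of the argument, is proper base change for $\Sp$-valued sheaves on $\LCH$ spaces; the other three bullets reduce to elementary topology or formal adjunction manipulations once proper base change has been established.
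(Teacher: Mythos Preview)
The paper does not supply its own proof here; it simply cites \cite[Theorem~7.4]{scholze2022sixfunctors} and points to \cite{volpe2023operationstopology} for general coefficients. Your outline is precisely the verification carried out in those references, so in that sense you match the paper.

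Two small issues. First, a terminological slip: the equivalence $g^* j_! \simeq j'_! (g')^*$ you cite is the base change for $j_!$ along an arbitrary $g$---the open/\'etale base change of bullet~(2)---not the proper base change. Second, the mixed compatibility $j_! g'_* \to g_* j'_!$ does not follow merely ``as the mate'': the mate of an equivalence need not be one. The argument that actually works is to transpose across $j_! \dashv j^*$, obtaining the map $g'_* \to j^* g_* j'_! \simeq g'_* (j')^* j'_!$ via the genuine proper base change $j^* g_* \simeq g'_* (j')^*$; this composite is $g'_*$ applied to the unit of $j'_! \dashv (j')^*$, which is invertible because $j'_!$ is fully faithful for an open immersion. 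So you need proper base change together with full faithfulness of $j'_!$, not just a mate.
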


Drew and Gallauer \cite{Drew_2022} recently proved that Morel–Voevodsky’s stable $\A^1$-homotopy category $\mathrm{SH}$ realizes the universal coefficient system in motivic homotopy theory which gives rise to Grothendieck’s six operations. Specifically, $\mathrm{SH}$ is the initial object in $\CoSys^{\mathrm{c}}_B$ \cite[Theorem 7.14]{Drew_2022}. This naturally raises the question of whether an analogous universal six-functor formalism exists in the setting of locally compact Hausdorff spaces. Our main result provides an affirmative answer to this question.

We denote by $\SixFF(\LCH, E, I, P)$ the $\infty$-category of Nagata six-functor formalisms\footnote{Informally, Nagata six-functor formalisms $\SixFF(C, E, I, P)$ are those six-functor formalisms arising from the Liu–Zheng and Mann construction applied to the category $C$ with specified classes of morphisms $(E, I, P)$.} as defined in \cite[Definition 2.15]{dauser2025uniquenesssixfunctorformalisms}. When the context is clear, we omit the classes $E, I, P$ and simply write $\SixFF(\LCH)$. We then consider the subcategory $\SixFF(\LCH)^{\cont} \subseteq \SixFF(\LCH)$ consisting of all continuous six-functor formalisms, i.e., those valued in dualizable stable $\infty$-categories and satisfying canonical descent, profinite descent, and hyperdescent (\cref{def:continuous-six-functor}). The morphisms in this subcategory $\SixFF(\LCH)^{\cont}$ are natural transformations whose components preserve colimits. The functor $X\mapsto \Shv(X; \Sp)$ defines a continuous six-functor formalism. Our main result characterizes it as follows:

\begin{theorem}[{\cref{thm:sheaf-initial-six-functor}}]
  The object $\Shv(-; \Sp) \in \SixFF(\LCH)^{\cont}$ is initial.
\end{theorem}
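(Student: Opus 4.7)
The plan is to prove that for any $D \in \SixFF(\LCH)^{\cont}$ the mapping space $\Map_{\SixFF(\LCH)^{\cont}}(\Shv(-;\Sp), D)$ is contractible, by constructing a canonical comparison morphism $\eta \colon \Shv(-;\Sp) \to D$ on a small class of test spaces and propagating it along the three descent conditions that together comprise continuity.

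First I would evaluate at the one-point space. The six-functor structure equips $D(\pt)$ with the structure of a symmetric monoidal presentable stable $\infty$-category, which by the continuity hypothesis is dualizable in $\PrL$. Since $\Shv(\pt;\Sp) \simeq \Sp$ is the initial object of $\CAlg(\PrL_{\mathrm{st}})$, there is an essentially unique colimit-preserving symmetric monoidal functor $\eta_{\pt} \colon \Sp \to D(\pt)$. This is the only nontrivial choice in the whole construction; the remainder of the argument transports $\eta_\pt$ through the structural maps of both source and target.

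I would then extend $\eta_\pt$ to all of $\LCH$ by successively enlarging the class of spaces on which $\eta$ is defined. On a finite discrete set $S$, the 6FF axioms force $\Shv(S;\Sp) \simeq \prod_s \Sp$ and $D(S) \simeq \prod_s D(\pt)$, so $\eta_S = \prod_s \eta_\pt$ is determined. Profinite descent then extends $\eta$ from finite sets to Stone (totally disconnected compact Hausdorff) spaces, which are cofiltered limits of finite sets. Canonical descent extends $\eta$ from Stone spaces to all compact Hausdorff spaces via projective covers by Stone spaces. Hyperdescent together with canonical descent for open covers by relatively compact opens then extends $\eta$ from compact spaces to all of $\LCH$.

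The substantive content is not the existence of $\eta_X$ for each $X$, but that the family $\{\eta_X\}$ assembles into a morphism in $\SixFF(\LCH)^{\cont}$ — that is, a natural transformation of lax symmetric monoidal functors $\Corr(\LCH, E)^\otimes \to \Cat_\infty$ whose components preserve colimits. Because the descent conditions are themselves phrased on the correspondence category, the coherences for $f^*$, $f_!$, and the external tensor product should propagate from the point together with $\eta$; here I would invoke the uniqueness framework of \cite{dauser2025uniquenesssixfunctorformalisms} together with \cref{thm:construction-six-functors} to reduce the six-functor compatibilities to the minimal Nagata data. The principal obstacle is precisely this combined coherence: making base change and the projection formula interact simultaneously with canonical, profinite, and hyperdescent for one and the same comparison morphism. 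Uniqueness of $\eta$ then follows from the same descent reduction, since the mapping space at $\pt$ is contractible by the initiality of $\Sp$ in $\CAlg(\PrL_{\mathrm{st}})$.
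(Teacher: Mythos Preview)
Your bootstrap strategy has a genuine gap at the passage from Stone spaces to general compact Hausdorff spaces. You write that ``canonical descent extends $\eta$ from Stone spaces to all compact Hausdorff spaces via projective covers by Stone spaces,'' but the canonical descent (localization) axiom in \cref{def:continuous-six-functor} concerns open immersions and their closed complements; a projective (Gleason) cover $\tilde X \twoheadrightarrow X$ is a \emph{proper surjection}, and nothing in the axioms gives descent along such maps or along their \v{C}ech nerves. The hyperdescent axiom does not help either: as formulated here it is a stalkwise conservativity statement on hypercomplete spaces, not a descent statement for covers in $\LCH$. Without this step you cannot reach even $[0,1]$, let alone a general compact Hausdorff space, from the point by your inductive scheme. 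A second gap is the coherence problem you yourself flag: even granting pointwise comparison functors $\eta_X$, assembling them into a morphism of lax symmetric monoidal functors on $\Corr(\LCH,E)^\otimes$ requires an infinite hierarchy of compatibilities with $f^*$, $f_!$, and $\otimes$ simultaneously; invoking \cite{dauser2025uniquenesssixfunctorformalisms} and \cref{thm:construction-six-functors} does not obviously produce these from the raw data you have at each stage.

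The paper's argument avoids both problems by working in the opposite direction. Rather than building $\eta$ from scratch, it first establishes that $\Shv(-;\Sp)$ is initial in the larger category $\CoSys^{\mathrm{c}}$ (\cref{thm:sheaf-initial-cosys}, via the Drew--Gallauer machinery), which already supplies a unique, fully coherent morphism $[-]\colon \Shv(-;\Sp)\to D$ compatible with $\otimes$, $f^*$, and $i_!$ for open immersions. By \cref{prop:continuous-six-functor-as-subcategory-of-cosys} the only remaining obligation is to check that $[-]$ commutes with $p_*$ for \emph{proper} $p$. This is reduced, using generators $i_!\pi_U^*1$ and base change, to the case $Y=\pt$, where it becomes the identification $[\pi_{U,!}\pi_U^*1_{\Sp}] \simeq \pi_{U,!}\pi_U^*1_{D(\pt)}$. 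That identification is the content of \cref{lem:compact-supp-cohomology-agree}: one shows the cosheaf $U\mapsto \pi_{U,!}\pi_U^*1_{D(\pt)}$ Verdier-dualizes to the constant sheaf, and the descent axioms are used only to \emph{verify an equivalence of sheaves} (embed in a Hilbert cube, profinite-descend to $[0,1]^n$, then check on stalks by hyperdescent), not to construct the morphism. This is precisely where the three continuity conditions earn their keep, and it is a much lighter use of them than your scheme would require.
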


Consequently, by the universal property of sheaves, both the cohomology and compactly supported cohomology theories in any continuous six-functor formalism coincide precisely with sheaf cohomology and sheaf cohomology with compact support, respectively (see \cref{prop:cohomology-equivalence}). Furthermore, the universal property of sheaves implies that all continuous six-functor formalisms are homotopy invariant (see \cref{thm:homotopy-invariant-six-functor}) which corresponds to the $\mathbb{A}^1$-homotopy invariance of the coefficient system in motivic homotopy theory.

As a concrete example, Efimov’s computation of the algebraic $K$-theory of $\infty$-categories of sheaves on locally compact Hausdorff spaces identifies it with sheaf cohomology with compact support \cite{efimov2025ktheorylocalizinginvariantslarge}. We generalize Efimov’s result to arbitrary continuous six-functor formalisms and to all continuous localizing invariants:

\begin{theorem}[{\cref{thm:localizing-invariant-six-functor}}]
  Fix a continuous six-functor formalism $D$. Let $\mathcal{E}$ be a presentable stable $\infty$-category, and $F^{\cont}: \Cat^{\dual}_\infty \to \mathcal{E}$ a continuous localizing invariant which commutes with filtered colimits. For any locally compact Hausdorff space $X$ there is an equivalence
  \[
    F^{\cont}(D(X)) \simeq \Gamma_{\mathrm{c}}\Big(X, \underline{F^{\cont}(D(\pt))}\Big).
  \]
\end{theorem}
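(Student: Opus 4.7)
The plan is to adapt Efimov's computation of $F^{\cont}(\Shv(X;\Sp))$ to the abstract setting of a continuous six-functor formalism, verifying that his argument uses only the six-functor structure and the continuity hypotheses on $D$, not features specific to sheaves. Consider the covariant functor $\mathcal{F}\colon \LCH \to \mathcal{E}$ defined by $\mathcal{F}(X) := F^{\cont}(D(X))$, with functoriality along $f\colon X \to Y$ induced by $f_!\colon D(X) \to D(Y)$; this is a morphism in $\Cat_\infty^{\dual}$ since $f_!$ preserves colimits as the left adjoint of $f^!$. The goal is to show $\mathcal{F}(X) \simeq \Gamma_c(X, \underline{\mathcal{F}(\pt)})$.

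First I would establish excision. For an open immersion $j\colon U \hookrightarrow X$ with closed complement $i\colon Z \hookrightarrow X$, the six-functor structure of $D$ supplies the recollement Verdier sequence
\[
  D(U) \xrightarrow{j_!} D(X) \xrightarrow{i^*} D(Z)
\]
in $\Cat_\infty^{\dual}$. Applying the localizing invariant $F^{\cont}$ turns this into a fiber sequence $\mathcal{F}(U) \to \mathcal{F}(X) \to \mathcal{F}(Z)$ in $\mathcal{E}$, which mirrors the excision triangle for compactly supported cohomology. Next I would establish the descent side: the canonical, profinite and hyperdescent conditions packaged into the definition of a continuous six-functor formalism, combined with the fact that $F^{\cont}$ preserves filtered colimits and sends Verdier sequences to fiber sequences, imply that $\mathcal{F}$ satisfies the descent conditions required of a compactly supported $\mathcal{E}$-valued cosheaf on $\LCH$. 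Together with the tautological identification $\mathcal{F}(\pt) = F^{\cont}(D(\pt))$, Efimov's characterization — originally phrased for $\Shv(-;\Sp)$ but whose proof uses only the structural properties just listed — then forces the equivalence $\mathcal{F}(X) \simeq \Gamma_c(X, \underline{F^{\cont}(D(\pt))})$.

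The main obstacle is the descent step: one has to translate the continuity conditions on $D$, stated as properties of $D$ as a $\Cat_\infty^{\dual}$-valued functor, into the specific descent statements that Efimov's characterization requires of $\mathcal{F}$ as an $\mathcal{E}$-valued functor. Concretely, this means commuting $F^{\cont}$ past the filtered colimits, geometric realizations, and cofiber diagrams encoding each descent hypothesis, which is possible precisely because $F^{\cont}$ is continuous and localizing. A secondary point is to verify that the open-closed recollement is compatible with each of these continuity conditions, so that the cosheaf picture is internally consistent and Efimov's argument applies verbatim with $D$ in place of $\Shv(-;\Sp)$.
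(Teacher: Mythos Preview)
Your overall strategy matches the paper's: form the cosheaf $U \mapsto F^{\cont}(D(U))$ via the open--closed recollement sequence, pass through Verdier duality to a sheaf on $X$, and identify it with the constant sheaf at $F^{\cont}(D(\pt))$ by reducing to the Hilbert cube and checking on stalks. Two remarks on the setup. First, the paper does not appeal to a black-box ``Efimov characterization''; it runs this Verdier-duality argument explicitly. Second, your functoriality via $f_!$ for arbitrary $f$ need not land in $\Cat^{\dual}_\infty$, since $f^!$ is not known to preserve colimits in general; the paper only uses covariant functoriality along open immersions (where $j_! \dashv j^* \dashv j_*$ gives strong continuity) and the map $F(D(\pt)) \to F(D(K))$ induced by $p^*$ for $p\colon K \to \pt$ proper.

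The genuine gap is the passage to arbitrary presentable $\mathcal{E}$. The stalkwise verification that $\underline{F^{\cont}(D(\pt))} \to \D(\mathcal{F})$ is an equivalence on $[0,1]^n$ requires that equivalences in $\Shv([0,1]^n; \mathcal{E})$ be detectable on stalks, and this is only guaranteed when $\mathcal{E}$ is compactly assembled (see \cref{lem:stalkwise-check}). Your descent discussion does not address this: the hyperdescent axiom is a property of $D$, not of $\mathcal{E}$-valued sheaves, and there is no mechanism for ``pushing it through $F^{\cont}$'' to produce the needed conservativity for general $\mathcal{E}$. The paper closes this gap by first proving the theorem for dualizable $\mathcal{E}$ (\cref{thm:localizing-invariant-dualizable-six-functor}), and then reducing the general presentable case to the universal finitary continuous localizing invariant $\mathcal{U}^{\cont}_{\mathrm{loc}} \colon \Cat^{\dual}_\infty \to \mathcal{M}_{\mathrm{loc}}$, invoking Efimov's theorem that $\mathcal{M}_{\mathrm{loc}}$ is dualizable. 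Any finitary $F^{\cont}$ factors through $\mathcal{U}^{\cont}_{\mathrm{loc}}$ via a colimit-preserving functor $\mathcal{M}_{\mathrm{loc}} \to \mathcal{E}$, which then commutes with $\Gamma_{\mathrm{c}}$. Without this reduction your argument does not close for non-dualizable $\mathcal{E}$.
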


Suppose $\mathcal{C}$ is a dualizable stable $\infty$-category. If we take $F^{\cont}$ to be the continuous algebraic $K$-theory $\mathrm{K}^{\cont}$ and set $D = \Shv(-; \mathcal{C})$, then one recovers Efimov’s result:

\begin{corollary}[{Efimov, \cite{efimov2025ktheorylocalizinginvariantslarge}}]
  For any locally compact Hausdorff space $X$ there is an equivalence
  \[
    \mathrm{K}^{\cont}(\Shv(X; \mathcal{C})) \simeq \Gamma_{\mathrm{c}}\Big(X, \underline{\mathrm{K}^{\cont}(\mathcal{C})}\Big).
  \]
\end{corollary}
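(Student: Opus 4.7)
The plan is to derive this as a direct specialization of \cref{thm:localizing-invariant-six-functor}. There are really only three things to check: that $\Shv(-;\mathcal{C})$ qualifies as a continuous six-functor formalism $D$, that $\mathrm{K}^{\cont}$ qualifies as the invariant $F^{\cont}$, and that the right-hand side of the general formula simplifies to the stated one on the point.

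First, I would recall from the excerpt (the theorem attributed to Scholze, Volpe, and the discussion immediately following the main theorem) that the assignment $X \mapsto \Shv(X; \mathcal{C})$ underlies a six-functor formalism on $\LCH$ in the sense of the Liu--Zheng/Mann construction, with $E$ all morphisms, $I$ open immersions and $P$ proper maps; it is therefore an object of $\SixFF(\LCH)$. The continuity hypotheses, namely dualizability of the categories $\Shv(X;\mathcal{C})$ and the canonical, profinite, and hyperdescent properties of \cref{def:continuous-six-functor}, are precisely the ones recalled in the introduction as being satisfied by spectral sheaves; the same proofs go through for sheaves valued in a dualizable stable $\mathcal{C}$ because $\Shv(X;\mathcal{C}) \simeq \Shv(X;\Sp) \otimes \mathcal{C}$ in $\PrL$, and dualizability together with the three descent properties are preserved by tensoring with a dualizable stable $\infty$-category. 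Hence $D := \Shv(-;\mathcal{C})$ is an object of $\SixFF(\LCH)^{\cont}$.

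Second, I would invoke Efimov's fundamental results to justify applying the theorem to $F^{\cont} = \mathrm{K}^{\cont}$: continuous algebraic $K$-theory is a continuous localizing invariant on $\Cat^{\dual}_\infty$ and commutes with filtered colimits (both being part of its defining universal property among continuous localizing invariants). No further verification is required for this step; it is purely a citation.

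With both inputs in place, the proof is a single substitution. Applying \cref{thm:localizing-invariant-six-functor} to $D = \Shv(-;\mathcal{C})$ and $F^{\cont} = \mathrm{K}^{\cont}$ yields
\[
  \mathrm{K}^{\cont}\bigl(\Shv(X;\mathcal{C})\bigr) \;\simeq\; \Gamma_{\mathrm{c}}\Bigl(X, \underline{\mathrm{K}^{\cont}(\Shv(\pt;\mathcal{C}))}\Bigr),
\]
and the identification $\Shv(\pt;\mathcal{C}) \simeq \mathcal{C}$ finishes the proof. The only potential obstacle is verifying that the continuity package (dualizability plus the three descent conditions) is stable under tensoring with $\mathcal{C}$; this is mild and essentially formal, but it is the one place where some care is needed if $\mathcal{C}$ is not compactly generated.
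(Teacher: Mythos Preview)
Your proposal is correct and matches the paper's own derivation: the corollary is obtained by specializing \cref{thm:localizing-invariant-six-functor} to $D=\Shv(-;\mathcal{C})$ and $F^{\cont}=\mathrm{K}^{\cont}$, together with $\Shv(\pt;\mathcal{C})\simeq\mathcal{C}$. The one point you flag as needing care, that $\Shv(-;\mathcal{C})$ lies in $\SixFF(\LCH)^{\cont}$ for dualizable $\mathcal{C}$, is already covered in the paper by \cref{prop:sheaf-profinite-descent}, \cref{lem:stalkwise-check}, and \cref{prop:prop:free-forgetful-six-functor-adjunction}, so no additional argument is required.
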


In conclusion, the continuous six-functor formalism captures the essential categorical structure of sheaf theory.

\subsection*{Notation}

The framework of this paper is grounded in higher category theory. Throughout, we work in the language of $\infty$-categories à la Lurie (Higher Topos Theory \cite{lurie2009HTT}, Higher Algebra \cite{lurie2017HA}). We summarize our main notions and notation:

\begin{itemize}
  \item
    A stable $\infty$-category is a pointed $\infty$-category admitting all finite limits and colimits, and every pullback square is also a pushout (an exact square). Functors preserving these limits and colimits are exact.

  \item
    Write $\Cat_\infty$ for the $\infty$-category of (small) $\infty$-categories, and $\Cat^{\ex}_\infty$ for its (non-full) subcategory of stable $\infty$-categories with exact functors. Let $\Cat^{\perf}_\infty$ be the full subcategory of $\Cat^{\ex}_\infty$ of idempotent-complete stable $\infty$-categories. For an $\infty$-category $\mathcal{C}$, we denote $\Ind(\mathcal{C})$ its $\Ind$-construction and $\mathcal{C}^\kappa$ its full subcategory of $\kappa$-compact object. An $\infty$-category $\mathcal{C}$ is compactly generated if the colimit functor $\Ind(\mathcal{C}^\omega) \to \mathcal{C}$ is an equivalence.

  \item
    Write $\PrL$ (resp. $\PrR$) for the $\infty$-category of presentable $\infty$-categories and left adjoints (resp. right adjoints), and $\PrL_{\st}$ (resp. $\PrR_{\st}$) for its (non-full) subcategory of presentable stable $\infty$-categories.

  \item
    A presentable stable $\infty$-category $\mathcal{C}$ is dualizable if any of the following equivalent holds\footnote{The second and third conditions are also known as the definition of a compactly assembled $\infty$-category.} (Lurie):
    \begin{enumerate}
      \item
        $\mathcal{C}$ is dualizable in $\PrL_{\st}$.

      \item
        $\mathcal{C}$ is a retract in $\PrL_{\st}$ of some compactly generated category.

      \item
        The colimit functor $\colim: \Ind(\mathcal{C}) \to \mathcal{C}$ admits a left adjoint $\hat{y}: \mathcal{C} \to \Ind(\mathcal{C})$.
    \end{enumerate}
    Write $\Cat^{\dual, \mathrm{L}}_\infty$ for the full subcategory of $\PrL_{\st}$ of dualizable presentable stable $\infty$-categories. Let $\Cat^{\dual}_\infty$ be the (non-full) subcategory of $\Cat^{\dual, \mathrm{L}}_\infty$ with strongly continuous functors, i.e., left adjoints whose right adjoints admit further right adjoints.

    Throughout this paper, whenever we refer to a stable $\infty$-category as dualizable, we always mean that it is dualizable in the sense of presentable $\infty$-categories.

  \item
    Finally, denote by $\mathcal{S}$ the $\infty$-category of spaces and by $\Sp$, its stabilization, the $\infty$-category of spectra.
\end{itemize}

\subsection*{Acknowledgments}

I would like to begin by expressing my sincere gratitude to Gijs Heuts for his inspiring and insightful discussions, which significantly shaped the direction and development of this work. I am also deeply grateful to Bram Mesland for his continuous support and encouragement throughout the course of this project. Finally, I wish to thank Josefien Kuijper and Liam Keenan for their interest in my research and the invaluable discussions we have shared.
\section{Category of sheaves} \label{sec:category-of-sheaves}

In this section, we briefly recall the fundamental properties of the $\infty$-category of sheaves used throughout the paper, and we give explicit constructions of the direct and inverse image functors as well as the proper pushforward and exceptional inverse functors.

\begin{definition}
  Let $\LCH$ denote the category of locally compact Hausdorff topological spaces with continuous maps.
\end{definition}

\begin{definition}\label{def:sheaf}
  Let $X$ be a locally compact Hausdorff space. The $\infty$-category of \emph{sheaves} with values in an $\infty$-category $\mathcal{C}$ is denoted by $\Shv(X; \mathcal{C})$. A sheaf $\mathcal{F} \in \Shv(X; \mathcal{C})$ is given by a functor
  \[
    \mathcal{F} : \Open(X)^{\op} \to \mathcal{C}
  \]
  satisfying the following descent conditions:

  \begin{enumerate}
    \item
      $\mathcal{F}(\emptyset) = 0$,
    \item
      for any open subsets $U, V \subseteq X$, the square
      \[\begin{tikzcd}
        {\mathcal{F}(U \cup V)} & {\mathcal{F}(U)} \\
        {\mathcal{F}(V)} & {\mathcal{F}(U \cap V)}
        \arrow[from=1-1, to=1-2]
        \arrow[from=1-1, to=2-1]
        \arrow["\lrcorner"{anchor=center, pos=0.125}, draw=none, from=1-1, to=2-2]
        \arrow[from=1-2, to=2-2]
        \arrow[from=2-1, to=2-2]
      \end{tikzcd}\]
      is a pullback,
      \item for a filtered system of open subsets $U = \bigcup_{i \in I} U_i$,
      \[
        \mathcal{F}\Big( \bigcup_{i \in I} U_i \Big) \xrightarrow{\sim} \lim_{i \in I} \mathcal{F}(U_i)
      \]
      is an equivalence in $\mathcal{C}$.
  \end{enumerate}
  In the special case where $\mathcal{C}$ is the $\infty$-category of spaces $\Space$, we simply write $\Shv(X) := \Shv(X; \mathcal{S})$.
\end{definition}

\begin{definition}
  Dually, the $\infty$-category of \emph{cosheaves} on a locally compact Hausdorff space $X$ with values in an $\infty$-category $\mathcal{C}$ is the $\infty$-category of colimit-preserving functors out of sheaves
  \[
    \CoShv(X; \mathcal{C}) := \Fun^{\colim}(\Shv(X, \Space), \mathcal{C}),
  \]
  similarly we write $\CoShv(X) := \CoShv(X; \mathcal{S})$.
\end{definition}

Our main interest lies in the algebraic behavior of sheaves, meaning we primarily consider  sheaves valued in a (dualizable) stable $\infty$-category, e.g., spectra $\Sp$. A key feature is that sheaves with values in a presentable $\infty$-category $\mathcal{C}$ can be formed by tensoring in presentable $\infty$-categories. Concretely, one has
\[
  \Shv(X; \mathcal{C}) \simeq \Shv(X) \otimes \mathcal{C}
\]
as presentable $\infty$-categories \cite[Corollary 2.24]{volpe2023operationstopology}. When we have a dualizable stable $\infty$-category $\mathcal{C}$ as coefficient, the $\infty$-category of sheaves $\Shv(X; \mathcal{C})$ is in fact dualizable \cite[Proposition 2.2.20, Theorem 2.9.2]{krause2024sheaves}.

\begin{remark}
  A sheaf can equivalently be defined by the descent property associated to the open-cover Grothendieck topology. More concretely, one requires that for any covering sieve $R\to y(x)$ the canonical morphism
  \[
    \mathcal{F}(x) \to \lim_{y(x') \to R} \mathcal{F}(x')
  \]
  is an equivalence. In addition, one obtains $\CoShv(X; \mathcal{C}) \simeq \Shv(X; \mathcal{C}^{\op})^{\op}$ (see \cite[Remark 2.19]{volpe2023operationstopology}).
\end{remark}

\begin{construction}\label{cons:sheaf-of-sheaves}
  Let $i: X \to Y$ be an open immersion, and let $\mathcal{C}$ be a dualizable stable $\infty$-category. The induced inclusion $i: \Open(X) \subseteq \Open(Y)$ admits a right adjoint given by the inverse image $i^{-1}: \Open(Y) \to \Open(X)$. This adjunction induces an adjoint triple on presheaves
  \[\begin{tikzcd}[column sep=huge]
    {\PSh(X; \mathcal{C})} & {\PSh(Y; \mathcal{C})}
    \arrow[""{name=0, anchor=center, inner sep=0}, "{\Lan_{i^{\op}}}"{description}, bend left=30, shift left=3, from=1-1, to=1-2]
    \arrow[""{name=1, anchor=center, inner sep=0}, "{(i^{-1,\op})^*}"{description}, bend right=30, shift right=3, from=1-1, to=1-2]
    \arrow[""{name=2, anchor=center, inner sep=0}, "{(i^{\op})^*}"{description}, from=1-2, to=1-1]
    \arrow["\bot"{description}, draw=none, from=1, to=2]
    \arrow["\bot"{description}, draw=none, from=2, to=0]
  \end{tikzcd}\]
  where the left Kan extension of $i^{\op}$ agrees with the extension by zero. Restrict the functors to sheaves and pass to the sheafification, it gives rise to the adjoint triple on sheaves.
  \[\begin{tikzcd}[column sep=huge]
    {\Shv(X; \mathcal{C})} & {\Shv(Y; \mathcal{C})}
    \arrow[""{name=0, anchor=center, inner sep=0}, "{i_!}"{description}, bend left=30, shift left=3, from=1-1, to=1-2]
    \arrow[""{name=1, anchor=center, inner sep=0}, "{i^*}"{description}, bend right=30, shift right=3, from=1-1, to=1-2]
    \arrow[""{name=2, anchor=center, inner sep=0}, "{i_*}"{description}, from=1-2, to=1-1]
    \arrow["\bot"{description}, draw=none, from=1, to=2]
    \arrow["\bot"{description}, draw=none, from=2, to=0]
  \end{tikzcd}\]
  For an arbitrary continuous map $f: X \to Y$, such an adjoint triple does not exist in general. Nevertheless, one can still define the pushforward functor
  \[
    f_\ast: \Shv(X; \mathcal{C}) \to \Shv(Y; \mathcal{C}) \qquad (f_\ast \mathcal{F})(U) = \mathcal{F}(f^{-1}(U)),
  \]
  which admits a left adjoint, namely the pullback functor
  \[
    f^\ast: \Shv(Y; \mathcal{C}) \to \Shv(X; \mathcal{C}) \qquad (f^\ast \mathcal{F}) = \Big(V \mapsto \colim_{\substack{f(V) \subseteq U, \\ U\;\open}}\mathcal{F}(U) \Big)^{\wedge}.
  \]
  where $(-)^{\wedge}$ denotes sheafification. Then we obtain a functor
  \begin{equation}\label{cons:functor-of-sheaves}
    \LCH^{\op} \to \PrL, \qquad X \mapsto \Shv(X; \mathcal{C})
  \end{equation}
  endowed with the contravariant pullback functoriality.
\end{construction}

\begin{proposition}\label{prop:sheaf-of-sheaves}
  The assignment (\ref{cons:functor-of-sheaves}) defines a sheaf of (presentable) $\infty$-categories on $\LCH$, i.e.,
  \begin{enumerate}
    \item
      $\Shv(\emptyset; \mathcal{C}) = \pt$,
    \item
      for any open subsets $U, V \subseteq X$, the square
      \[\begin{tikzcd}
        {\Shv(U \cup V;\mathcal{C})} & {\Shv(U;\mathcal{C})} \\
        {\Shv(V;\mathcal{C})} & {\Shv(U \cap V;\mathcal{C})}
        \arrow[from=1-1, to=1-2]
        \arrow[from=1-1, to=2-1]
        \arrow["\lrcorner"{anchor=center, pos=0.125}, draw=none, from=1-1, to=2-2]
        \arrow[from=1-2, to=2-2]
        \arrow[from=2-1, to=2-2]
      \end{tikzcd}\]
    is a pullback in $\Cat_\infty^{\ex}$. Moreover, all the functors are Bousfield localizations.
    \item
      for a filtered system of open subsets $U = \bigcup_{i \in I} U_i$,
      \[
        \Shv(U; \mathcal{C}) \xrightarrow{\sim} \lim_{i \in I} \Shv(U_i, \mathcal{C})
      \]
      is an equivalence, where the limit is taken along the functors $(f_{ij})^\ast: \Shv(U_j; \mathcal{C}) \to \Shv(U_i; \mathcal{C})$ in $\Cat_\infty^{\ex}$.
  \end{enumerate}
\end{proposition}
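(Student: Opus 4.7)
The plan is to verify each of the three axioms directly from the descent conditions of \cref{def:sheaf} and the pullback/pushforward functors from \cref{cons:sheaf-of-sheaves}. Axiom (1) is essentially by inspection: $\Open(\emptyset) = \{\emptyset\}$, and the condition $\mathcal{F}(\emptyset) = 0$ forces $\Shv(\emptyset; \mathcal{C})$ to be the terminal $\infty$-category. Axioms (2) and (3) each assert an equivalence of $\infty$-categories, and in both cases I would produce an explicit inverse to the canonical restriction functor via a gluing formula.

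For axiom (2), I would first observe that for any open immersion $j: U \hookrightarrow X$, the pullback $j^*$ is a Bousfield localization, because $j_!$ is left Kan extension along a fully faithful inclusion $\Open(U) \hookrightarrow \Open(X)$, so $j^* j_! \simeq \id$ and hence $j_!$ (and $j_*$) is fully faithful. This handles the second assertion in (2) uniformly. To establish the pullback square, I would define the restriction functor
\[
\Phi : \Shv(U \cup V; \mathcal{C}) \longrightarrow \Shv(U; \mathcal{C}) \times_{\Shv(U \cap V; \mathcal{C})} \Shv(V; \mathcal{C})
\]
via $(j_U^*, j_V^*)$ together with the canonical coherence data, and construct an inverse $\Psi$ by gluing: given a compatible triple $(\mathcal{F}_U, \mathcal{F}_V, \alpha)$, define a presheaf on $U \cup V$ by the pullback formula
\[
\mathcal{F}(W) := \mathcal{F}_U(W \cap U) \times_{\mathcal{F}_U(W \cap U \cap V)} \mathcal{F}_V(W \cap V),
\]
which is motivated by applying the two-fold sheaf condition to the cover $\{W \cap U, W \cap V\}$ of $W$. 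That $\mathcal{F}$ is a sheaf follows from sheafhood of $\mathcal{F}_U, \mathcal{F}_V$ together with the fact that limits commute with limits (so filtered limits in $W$-direction commute with the finite pullback defining $\mathcal{F}(W)$). That $\Phi \circ \Psi \simeq \id$ uses $j^* j_* \simeq \id$ on the two open pieces; that $\Psi \circ \Phi \simeq \id$ is exactly the two-fold descent axiom for a sheaf on $U \cup V$.

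For axiom (3), the same template applies with the two-fold pullback replaced by a filtered limit: for a compatible system $(\mathcal{F}_i)$ in $\lim_{i \in I} \Shv(U_i; \mathcal{C})$, define
\[
\mathcal{F}(W) := \lim_{i \in I} \mathcal{F}_i(W \cap U_i),
\]
well-defined by the compatibility data, and verify that it satisfies all three sheaf axioms using that limits commute with limits, and that the two-fold pullback axiom is preserved pointwise in $i$. Conversely, any sheaf $\mathcal{F}$ on $U$ automatically satisfies $\mathcal{F}(W) \simeq \lim_i \mathcal{F}(W \cap U_i)$ by axiom (iii) applied to the filtered cover $W = \bigcup_i (W \cap U_i)$, so the restriction functor is fully faithful, and the gluing construction gives essential surjectivity.

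The main technical obstacle I anticipate is bookkeeping: making the gluing formulas genuinely functorial and coherent at the $\infty$-categorical level (rather than just pointwise on opens), and checking that the resulting limit is computed in $\Cat_\infty^{\ex}$ rather than merely in $\Cat_\infty$. The latter is not a real issue, since limits in $\PrL_{\st}$ along left adjoints are computed in $\Cat_\infty$ via the right adjoints $j_*$, and these are automatically exact; the former is handled most cleanly by recognising the gluing formula as Kan extension along $\Open(U) \hookrightarrow \Open(U \cup V)$ (resp.\ $\colim_i \Open(U_i) \to \Open(U)$) restricted to sheaves, so that the construction is intrinsic rather than ad hoc.
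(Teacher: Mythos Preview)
Your proposal is correct and takes a genuinely different route from the paper. The paper's ``proof'' is a bare citation to \cite[Proposition 3.6.3]{krause2024sheaves} and contains no argument of its own. Your direct verification---inspecting the sheaf axioms and building explicit inverse functors via the gluing formulas $\mathcal{F}(W) = \mathcal{F}_U(W\cap U) \times_{\mathcal{F}_U(W\cap U\cap V)} \mathcal{F}_V(W\cap V)$ and $\mathcal{F}(W) = \lim_i \mathcal{F}_i(W\cap U_i)$---is the standard hands-on proof and is sound; it also makes transparent where each of the three descent axioms in \cref{def:sheaf} is used, and your observation that $j^*j_!\simeq\id$ (whence $j^*$ is a Bousfield localization) is correct since $j_!$ arises from left Kan extension along a fully faithful inclusion and restriction to an open commutes with sheafification. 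The benefit of your approach is self-containment and transparency; the cost is the $\infty$-categorical coherence bookkeeping you flag at the end, which is genuine but routine, and your suggestion to package the gluing as a Kan extension is the right way to make it precise. The paper's approach trades all of this for brevity.
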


\begin{proof}
  \cite[Proposition 3.6.3]{krause2024sheaves}
\end{proof}

For an open immersion $i: X\to Y$, we have an adjoint triple $i_! \dashv i^* \dashv i_*$ as established in \cref{cons:sheaf-of-sheaves}. Thus the functor $i_!$ is compactly assembled\footnote{In stable case being compactly assembled is equivalent to being strongly continuous, i.e., the right adjoint has a further right adjoint, see the discussion below \cite[Definition 2.9.6]{krause2024sheaves}.}, i.e., takes compact morphisms to compact morphisms.

Consider the category $\LCH_{\open}$ of locally compact Hausdorff spaces and open immersions, the assignment $X \mapsto \Shv(X; \mathcal{C})$ on $\LCH_{\open}$ with values landing in $\Cat^{\dual, \mathrm{L}}_\infty$ becomes a covariant functor by means of the extension functoriality.

\begin{corollary}
  The functor $\Shv(-; \mathcal{C}): \LCH_{\open} \to \Cat^{\dual, \mathrm{L}}_\infty$ is a cosheaf.
\end{corollary}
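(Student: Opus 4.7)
The plan is to derive the three cosheaf axioms on $\LCH_{\open}$ directly from the three sheaf axioms on $\LCH$ provided by \cref{prop:sheaf-of-sheaves}, using the standard adjoint duality $\PrL \simeq (\PrR)^{\op}$ of \cite[\S 5.5.3]{lurie2009HTT}. For each open immersion $i\colon U \hookrightarrow X$, \cref{cons:sheaf-of-sheaves} produces an adjoint triple $i_! \dashv i^* \dashv i_*$. Consequently, the covariant $i_!$-diagram $\Shv(-; \mathcal{C})\colon \LCH_{\open} \to \PrL_{\st}$ is precisely the image under this duality of the contravariant $i^*$-diagram $\Shv(-; \mathcal{C})|_{\LCH_{\open}^{\op}}\colon \LCH_{\open}^{\op} \to \PrR$, where the latter sends each open inclusion to the right adjoint $i^*$ of $i_!$. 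Any colimit of the former in $\PrL_{\st}$ is therefore identified with the corresponding limit of the latter in $\PrR$, and it is the sheaf axioms that compute this limit.

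Concretely, I would verify each cosheaf axiom in turn. The empty-space axiom is immediate from \cref{prop:sheaf-of-sheaves}(1), since $\Shv(\emptyset; \mathcal{C}) = \pt$ is the initial object of $\Cat^{\dual,\mathrm{L}}_\infty$. For the union axiom, the pushout square in $\Cat^{\dual,\mathrm{L}}_\infty$ formed by the $i_!$'s dualizes under $\PrL \simeq (\PrR)^{\op}$ to the pullback square in $\PrR$ formed by the $i^*$'s; since the forgetful $\PrR \hookrightarrow \Cat_\infty$ creates limits, and since all categories are stable with exact functors (so this limit coincides with the limit in $\Cat_\infty^{\ex}$), the resulting pullback is exactly the one supplied by \cref{prop:sheaf-of-sheaves}(2), and hence equals $\Shv(U\cup V;\mathcal{C})$. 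The filtered-union axiom is handled identically, invoking \cref{prop:sheaf-of-sheaves}(3) in place of (2).

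The only remaining sanity check is that each colimit, computed a priori in $\PrL_{\st}$, genuinely computes a colimit in the full subcategory $\Cat^{\dual,\mathrm{L}}_\infty \subseteq \PrL_{\st}$; but this is automatic, since the vertex $\Shv(Y;\mathcal{C})$ is always dualizable by \cite[Theorem 2.9.2]{krause2024sheaves} and a full subcategory inclusion detects colimits whose vertex lies inside. I do not anticipate a genuine obstacle: the only delicate point is tracking the compatibility of limits and colimits across the categories $\Cat_\infty^{\ex}$, $\Cat_\infty$, $\PrR$, and $(\PrL)^{\op}$, which amounts to invoking standard creation-of-limits results for their mutual inclusions.
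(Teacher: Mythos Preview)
Your proposal is correct and follows essentially the same approach as the paper, whose proof is the single line ``It follows directly from \cref{prop:sheaf-of-sheaves} by passing to the left adjoints.'' Your write-up simply unpacks what this passage to left adjoints means via the equivalence $\PrL \simeq (\PrR)^{\op}$, and your final sanity check (that the colimit vertex lands in the full subcategory $\Cat^{\dual,\mathrm{L}}_\infty$) is a reasonable point the paper leaves implicit.
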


\begin{proof}
  It follows directly from \cref{prop:sheaf-of-sheaves} by passing to the left adjoints.
\end{proof}

\begin{remark}\label{rem:recollement}
  For any open immersion $j: U \subseteq X$ with complementary closed immersion $i: (X \setminus U) \to X$, there is a fiber sequence in $\Cat^\ex_\infty$
  \[
    \Shv(X \setminus U; \mathcal{C}) \xrightarrow{i_*} \Shv(X; \mathcal{C}) \xrightarrow{j^*} \Shv(U; \mathcal{C})
  \]
  which follows directly from the recollement, i.e., the exact sequence $i_* i^! \to \id_{\Shv(X;\mathcal{C})} \to j_*j^*$ together with the fact that $i_*$ is fully faithful (see \cite[Corollary 4.7, Remark 4.11]{volpe2023operationstopology}). This fiber sequence is also called the localization sequence. Then the second sheaf condition in the statement of \cref{prop:sheaf-of-sheaves} can be deduced from the localization sequence by comparing the fibers.
\end{remark}

\begin{proposition}[Profinite descent]\label{prop:sheaf-profinite-descent}
  The sheaf $\Shv(-; \mathcal{C})$ satisfies profinite descent, specifically, for a cofiltered limit of compact Hausdorff spaces $X = \lim_{i \in I} X_i$ we have an equivalence
  \[
    \Shv(X; \mathcal{C}) \xrightarrow{\simeq} \lim\nolimits^{\PrR} \Shv(X_i; \mathcal{C})
  \]
  where the limit is taken along the functors $(f_{ij})_\ast: \Shv(X_i; \mathcal{C}) \to \Shv(X_j; \mathcal{C})$.
\end{proposition}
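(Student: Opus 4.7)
The strategy is to reduce to the case $\mathcal{C} = \Sp$, dualize the limit in $\PrR$ to a filtered colimit in $\PrL$ along pullbacks, and then identify that colimit with $\Shv(X; \Sp)$ using the standard basis for the topology on a cofiltered limit. First, I would exploit the tensor presentation $\Shv(X; \mathcal{C}) \simeq \Shv(X; \Sp) \otimes_{\Sp} \mathcal{C}$ (from the discussion after \cref{def:sheaf}). Since $(-) \otimes_{\Sp} \mathcal{C}: \PrL_{\st} \to \PrL_{\st}$ is a left adjoint, it preserves colimits in $\PrL$, equivalently limits in $\PrR$ under the anti-equivalence $\PrL \simeq (\PrR)^{\op}$. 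This reduces everything to the universal case $\mathcal{C} = \Sp$.

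Under the same anti-equivalence, the desired equivalence becomes the statement that the canonical functor
\[
  \colim_{i \in I^{\op}}^{\PrL} \Shv(X_i; \Sp) \longrightarrow \Shv(X; \Sp),
\]
induced by the pullbacks $(p_i)^*$ along the projections $p_i: X \to X_i$, is an equivalence in $\PrL_{\st}$. Here $I^{\op}$ is filtered, and each transition $(f_{ij})^*$ is strongly continuous because $(f_{ij})_*$ admits a further right adjoint $(f_{ij})^!$ (as $f_{ij}$ is a continuous map between compact Hausdorff spaces, hence proper). The key geometric input is that opens of the form $p_i^{-1}(V)$, with $V \subseteq X_i$ open and $i \in I$, constitute a basis for the topology on $X = \lim_{i \in I} X_i$; this is standard for cofiltered limits of compact Hausdorff spaces. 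Combined with \cref{prop:sheaf-of-sheaves}(3), which converts filtered unions of opens into limits of sheaf categories, this means that a sheaf on $X$ is already determined by its values on the pulled-back opens.

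To finish, I would construct an explicit quasi-inverse to the colimit functor: given a compatible system $\{\mathcal{F}_i\}_{i \in I}$ with equivalences $(f_{ij})^* \mathcal{F}_j \simeq \mathcal{F}_i$, the assignment $p_i^{-1}(V) \mapsto \mathcal{F}_i(V)$ is well-defined on the basis up to coherent compatibility and sheafifies to a sheaf on $X$ via \cref{prop:sheaf-of-sheaves}. Verifying that this construction interacts correctly with the adjunction data of $(p_i)^* \dashv (p_i)_*$ and lands in the colimit in $\PrL$ (rather than merely in $\Cat_\infty$) then completes the proof that the two functors are mutually inverse.

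\textbf{Main obstacle.} The principal difficulty is rigorously controlling the colimit in $\PrL$ along left adjoints, since such colimits are generally not computed as colimits of the underlying $\infty$-categories. The essential input that makes the identification work is the properness of all maps involved: continuous maps between compact Hausdorff spaces are proper, so each pushforward $(f_{ij})_*$ and $(p_i)_*$ is strongly continuous and preserves filtered colimits, which allows the basis-level gluing to be upgraded to an equivalence of dualizable presentable $\infty$-categories. This properness is precisely what distinguishes the compact Hausdorff setting from the general locally compact one and is the real engine behind profinite descent.
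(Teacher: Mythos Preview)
The paper does not prove this statement itself; it simply cites \cite[Proposition~3.6.7]{krause2024sheaves}. Your proposal therefore attempts strictly more than the paper does, and the overall strategy---reduce to $\mathcal{C}=\Sp$ via the tensor decomposition, dualize the $\PrR$-limit to a filtered colimit in $\PrL$ along pullbacks, and exploit the basis of pulled-back opens on the inverse limit---is the right one and is essentially how the cited reference proceeds.

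There is, however, a genuine slip in your description of the target category that propagates into the quasi-inverse. The colimit $\colim^{\PrL}_{I^{\op}}\Shv(X_i;\Sp)$ along the left adjoints $(f_{ij})^*$ is computed, via $\PrL\simeq(\PrR)^{\op}$, as the limit in $\Cat_\infty$ along the \emph{right} adjoints $(f_{ij})_*$. Its objects are therefore systems $\{\mathcal{F}_i\}$ with $(f_{ij})_*\,\mathcal{F}_i\simeq\mathcal{F}_j$, not $(f_{ij})^*\,\mathcal{F}_j\simeq\mathcal{F}_i$ as you wrote; the latter is the \emph{limit} along pullbacks, a different category. This matters because your basis formula $p_i^{-1}(V)\mapsto\mathcal{F}_i(V)$ is well-defined precisely under pushforward compatibility (then $\mathcal{F}_j(W)=\mathcal{F}_i(f_{ij}^{-1}(W))$, matching $p_j^{-1}(W)=p_i^{-1}(f_{ij}^{-1}(W))$), and has no pointwise meaning under pullback compatibility. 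Once you correct this, you are really constructing the inverse to $\Shv(X)\to\lim\Shv(X_i)$, $\mathcal{F}\mapsto\{(p_i)_*\mathcal{F}\}$, which is the right comparison map.

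A smaller gap: the appeal to \cref{prop:sheaf-of-sheaves} is misplaced---that result glues sheaf \emph{categories} over filtered unions of opens, not individual sheaves from a basis. To close the argument you need an honest sheaf-on-a-basis verification, and it is exactly here that compactness and properness of the $p_i$ do their real work (a cover of $p_i^{-1}(V)$ by basic opens can, after refining the index, be replaced by a genuine cover at a finite stage). You correctly identify this as the main obstacle, but it deserves an explicit argument rather than a citation.
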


\begin{proof}
  \cite[Proposition 3.6.7]{krause2024sheaves}
\end{proof}

\begin{proposition}\label{prop:section-profinite-descent}
  For a cofiltered limit of compact Hausdorff spaces $X = \lim_{i \in I} X_i$ we have an equivalence
  \[
    \Gamma(X, \underline{E}) \simeq \colim \Gamma(X_i, \underline{E})
  \]
  for any spectrum $E$.
\end{proposition}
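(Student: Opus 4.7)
The plan is to deduce the statement from profinite descent, \cref{prop:sheaf-profinite-descent}, via a computation of mapping spectra in the resulting filtered colimit.

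Since $X = \lim_i X_i$ is compact Hausdorff by Tychonoff and every continuous map between compact Hausdorff spaces is proper, each transition $f_\alpha \colon X_i \to X_j$ satisfies $(f_\alpha)_* = (f_\alpha)_!$ and hence admits a further right adjoint $(f_\alpha)^!$. Thus the functors $(f_\alpha)_*$ appearing in \cref{prop:sheaf-profinite-descent} are themselves strongly continuous left adjoints, and passing to their left adjoints rewrites the profinite descent as a filtered colimit in $\PrL$,
\[
  \Shv(X; \Sp) \simeq \colim_{i \in I^{\op}}^{\PrL} \Shv(X_i; \Sp),
\]
along the pullback functors $(f_\alpha)^*$, with structure maps $f_i^* \colon \Shv(X_i; \Sp) \to \Shv(X; \Sp)$. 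Since pullback preserves constant sheaves, the unit $\underline{\S}_X$ and the constant sheaf $\underline{E}_X$ correspond to the compatible systems $(\underline{\S}_{X_i})_i$ and $(\underline{E}_{X_i})_i$, respectively.

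I would then identify $\Gamma(X, \underline{E}) \simeq \mathrm{map}_{\Shv(X; \Sp)}(\underline{\S}_X, \underline{E}_X)$ via the $p_X^* \dashv p_{X,*}$ adjunction, and the main step is to compute this mapping spectrum as $\colim_j \mathrm{map}_{\Shv(X_j; \Sp)}(\underline{\S}_{X_j}, \underline{E}_{X_j}) = \colim_j \Gamma(X_j, \underline{E})$. The main obstacle — the key technical input — is the assertion that in a filtered colimit in $\PrL$ along strongly continuous functors, mapping spectra between objects coming from a fixed stage are computed as filtered colimits of the corresponding mapping spectra in the source categories. This is precisely where the properness of all maps in the cofiltered system (and hence the strong continuity of all pullbacks) is essential.

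As an alternative, should the above mapping-spectrum commutation be cumbersome to establish directly, one may first reduce to the case of Eilenberg--MacLane coefficients: both sides of the desired equivalence commute with filtered colimits in $E$, since for compact Hausdorff $Y$ the global sections functor $\Gamma(Y, -) = p_{Y,*} = p_{Y,!}$ is a left adjoint and $p_Y^*$ preserves all colimits, while filtered colimits commute with filtered colimits on the right-hand side. This reduces the statement to the classical continuity of sheaf cohomology for compact Hausdorff spaces under cofiltered limits, applied level-wise to the homotopy groups of $E$.
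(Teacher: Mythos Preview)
The paper does not supply a proof; it cites \cite[Proposition 3.6.8]{krause2024sheaves}. Your primary approach is correct and is essentially the argument behind that reference: rewrite profinite descent as a filtered colimit $\Shv(X;\Sp)\simeq\colim^{\PrL}_{I^{\op}}\Shv(X_i;\Sp)$ along the pullbacks, observe that these pullbacks are strongly continuous because the transition maps are proper, and then invoke the mapping-spectrum formula for filtered colimits in $\PrL$ along strongly continuous functors. The input you isolate---that for such a colimit with structure maps $\iota_j$ and right adjoints $\pi_j$ one has $\pi_{j_0}\iota_{j_0}\simeq\colim_{\alpha:j_0\to j}G_\alpha F_\alpha$, equivalently that mapping spectra between objects pushed in from a fixed stage are computed as a filtered colimit over later stages---is precisely the crux, and it genuinely needs the right adjoints $(f_\alpha)_*$ to preserve colimits.

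Your alternative route has a gap. Both sides preserve all colimits in $E$ (not merely filtered ones), so one reduces to $E=\S$, not to Eilenberg--MacLane coefficients: EM spectra do not generate $\Sp$ under colimits alone. Getting from $\S$ (or any bounded-below spectrum) to its Postnikov pieces involves an inverse limit, and commuting $\Gamma(X,-)$ past that limit would require a finite-cohomological-dimension hypothesis that arbitrary compact Hausdorff spaces do not satisfy. Hence the appeal to classical continuity of sheaf cohomology ``applied level-wise to the homotopy groups of $E$'' does not close the argument as written.
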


\begin{proof}
  \cite[Proposition 3.6.8]{krause2024sheaves}
\end{proof}

In the theory of $\infty$-categorical sheaves, equivalences cannot, in general, be detected simply by evaluating on stalks. One counterexample can be found in \cite[Counterexample 6.5.4.5, Remark 6.5.4.7]{lurie2009HTT}\footnote{The topological space in this counterexample is coherent but not Hausdorff.}. To enforce a fully local criterion, one further localizes the $\infty$-category of sheaves with respect to hypercovers. The resulting objects are the so-called hypersheaves, which by construction satisfy the hyperdescent (see \cite[\Sec6.5.4]{lurie2009HTT} for more details).

\begin{definition}\label{def:hypercomplete-space}
  A locally compact Hausdorff space $X$ is \emph{hypercomplete} if the sheaf $\Shv(X)$ is hypercomplete, equivalently, if the canonical functor $\Shv(X) \to \Shv^{\hyp}(X)$ is an equivalence.
\end{definition}

\begin{lemma}
  If $X$ is paracompact and has finite covering dimension, then it is hypercomplete.
\end{lemma}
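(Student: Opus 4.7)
The plan is to deduce this from two standard results of Lurie in \cite{lurie2009HTT}: first, that a paracompact space $X$ of finite covering dimension $n$ yields an $\infty$-topos $\Shv(X)$ of homotopy dimension $\leq n$ (HTT 7.2.3.6), and second, that any $\infty$-topos of (locally) finite homotopy dimension is hypercomplete (HTT 7.2.1.12). Combined, these immediately give the lemma. In view of \cref{def:hypercomplete-space}, it suffices to verify that the $\infty$-topos $\Shv(X)$ is hypercomplete.

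I would begin by recalling the relevant notions. An $\infty$-topos $\mathcal{X}$ has \emph{homotopy dimension} $\leq n$ if every $n$-connective object admits a global section out of the terminal object. Hypercompleteness is the condition that every $\infty$-connective morphism is an equivalence, equivalently that the only $\infty$-connective object up to equivalence is the terminal object. The passage from finite homotopy dimension to hypercompleteness is then essentially formal: if $\mathcal{F}$ is $\infty$-connective it is in particular $(n{+}1)$-connective, and by iterating the global section property in slice topoi one forces $\mathcal{F} \simeq 1$.

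The substantive input is the homotopy-dimension bound. Given an $n$-connective sheaf $\mathcal{F} \in \Shv(X)$, paracompactness supplies partitions of unity subordinate to arbitrary open covers, while the finite covering dimension hypothesis refines any cover to one of order $\leq n+1$, i.e., in which any $n+2$ distinct members have empty intersection. One then builds a global section by inducting on the order of the cover, producing compatible local sections on progressively larger unions of multiple intersections and gluing via the partition of unity. The main obstacle in writing a self-contained proof is precisely this combinatorial shrinking-and-gluing argument, which requires a careful interplay between paracompactness (for shrinking covers), finite covering dimension (for bounding the order of refinements), and the nerve-theoretic bookkeeping of Čech covers; this is exactly the content of HTT 7.2.3.6. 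Since the lemma plays only an auxiliary role here, the most economical route is to invoke these two cited results rather than reprove them.
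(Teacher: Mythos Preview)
Your proposal is correct and matches the paper's own proof, which simply cites \cite[Theorem 7.2.3.6, Proposition 7.2.1.10, Corollary 7.2.1.12]{lurie2009HTT}. The only minor difference is that the paper also cites Proposition 7.2.1.10 as an intermediate step, but your two citations already suffice for the conclusion.
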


\begin{proof}
  \cite[Theorem 7.2.3.6, Proposition 7.2.1.10, Corollary 7.2.1.12]{lurie2009HTT}
\end{proof}

\begin{lemma}\label{lem:stalkwise-check}
  Equivalences between hypersheaves valued in a compactly assembled category $\mathcal{C}$ can be detected at the level of stalks. Consequently, if a space $X$ is hypercomplete, then equivalences in $\Shv(X, \mathcal{C})$ can be determined stalkwise.
\end{lemma}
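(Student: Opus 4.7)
The plan is first to reduce the second assertion to the first. If $X$ is hypercomplete, then $\Shv(X) \simeq \Shv^{\hyp}(X)$; tensoring with $\mathcal{C}$ in $\PrL$ and using the identification $\Shv(X; \mathcal{C}) \simeq \Shv(X) \otimes \mathcal{C}$ (and its hypercomplete analogue) yields $\Shv(X; \mathcal{C}) \simeq \Shv^{\hyp}(X; \mathcal{C})$, so only the first assertion needs to be proved.

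For the first assertion, I would take as input the classical fact that for every topological space $X$ the family of stalk functors $\{x^{\ast}: \Shv^{\hyp}(X) \to \Space\}_{x \in X}$ is jointly conservative (Lurie, HTT 6.5.4). I would then bootstrap this to $\mathcal{C}$-coefficients using the compactly assembled hypothesis. Concretely, write $\mathcal{C}$ as a retract in $\PrL_{\st}$ of a compactly generated stable $\infty$-category $\mathcal{D}$, via $i: \mathcal{C} \to \mathcal{D}$ and $r: \mathcal{D} \to \mathcal{C}$ with $r \circ i \simeq \id_{\mathcal{C}}$. Postcomposition induces $i_{\ast}, r_{\ast}$ on hypersheaves with $r_{\ast} i_{\ast} \simeq \id$, and since $i$ and $r$ are colimit-preserving, they commute with stalk formation $\mathcal{F}_x \simeq \colim_{U \ni x} \mathcal{F}(U)$. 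Hence it is enough to establish the claim when the coefficients are compactly generated.

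For compactly generated $\mathcal{D}$ with a set of compact generators $\{c_j\}_{j \in J}$, a morphism $f: \mathcal{F} \to \mathcal{G}$ in $\Shv^{\hyp}(X; \mathcal{D})$ is an equivalence if and only if the space-valued hypersheaf map $\Map(c_j, f): U \mapsto \Map(c_j, f(U))$ is an equivalence for every $j$. Compactness of $c_j$ gives the interchange $\Map(c_j, f)_x \simeq \Map(c_j, f_x)$, so applying the space-valued conservativity reduces the question to requiring $\Map(c_j, f_x)$ to be an equivalence for all $j, x$, which is equivalent to $f_x$ being an equivalence in $\mathcal{D}$ for every $x$. This closes the argument.

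The main technical point to confirm is that stalk formation genuinely commutes with the retract $\mathcal{C} \to \mathcal{D} \to \mathcal{C}$ and with the operation $\Map(c_j, -)$; this is routine since stalks are filtered colimits and the relevant functors are left adjoints (respectively preserve filtered colimits by compactness). A smaller subtlety is that the pointwise formulas for $\Shv^{\hyp}(X; \mathcal{D})$ must be justified through the tensor-product identification $\Shv^{\hyp}(X; \mathcal{D}) \simeq \Shv^{\hyp}(X) \otimes \mathcal{D}$, but this is already recorded in the preceding material of the section.
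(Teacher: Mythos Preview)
Your argument is correct. The paper does not supply its own proof here but simply cites \cite[Lemma 3.6.11]{krause2024sheaves}, so there is no in-paper argument to compare against; your direct proof via the retract characterization of compactly assembled categories and reduction to space-valued hypersheaves through compact generators is the standard route to this statement.

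One caution on phrasing: the sentence ``postcomposition induces $i_*, r_*$ on hypersheaves'' is not quite right as stated, since a left adjoint need not preserve the limit conditions defining the (hyper)sheaf property under literal postcomposition. What you actually need---and what you acknowledge in your final paragraph---is that the retract on hypersheaf categories is obtained by tensoring $\Shv^{\hyp}(X)$ with the retract $\mathcal{C} \to \mathcal{D} \to \mathcal{C}$ in $\PrL$, and that the stalk functors $x^*$ are induced the same way; functoriality of the tensor product then gives the required commutation with stalks. With that adjustment the argument goes through cleanly.
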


\begin{proof}
  \cite[Lemma 3.6.11]{krause2024sheaves}
\end{proof}

\begin{remark}
  In the situation of \cref{def:hypercomplete-space}, i.e., $X$ is hypercomplete, one has
  \[
    D(\Ab(X)) \simeq \Shv(X; D(\Z)),
  \]
  the derived category of abelian sheaves on $X$ agrees with the $\infty$-category of sheaves on $X$ valued in the derived category $D(\Z)$ \cite[Proposition 7.1]{scholze2022sixfunctors}. Thus the theory of $\infty$-categories of sheaves subsumes the classical sheaf cohomology theory over ``good'' spaces, e.g., compact manifolds.
\end{remark}

The method of reducing the verification of equivalences of sheaves to the stalk level allowing local-to-global arguments to be carried out, together with the profinite descent, will be the central techniques in this paper.

To conclude this section, we present Lurie’s formulation of Verdier duality, which establishes the self-duality of categories of sheaves.

\begin{construction}
  Let $X$ be a Hausdorff topological space and let $\mathcal{F}$ be a $\mathcal{C}$-valued sheaf on $X$. Given an open subset $U \subseteq X$, we let
  \[
    \mathcal{F}_{\mathrm{c}}(U) = \Gamma_{\mathrm{c}}(U, \mathcal{F}|_U)
  \]
  which are the sections of $\mathcal{F}$ supported in a compact subset of $U$. Then $\mathcal{F}_{\mathrm{c}}$ is a covariant functor in $U$.
\end{construction}

\begin{theorem}[Lurie, Verdier duality]\label{thm:verdier-duality}
  Let $\mathcal{C}$ be a stable $\infty$-category which admits small limits and colimits, and let $X$ be a locally compact Hausdorff space. The assignment $\mathcal{F}\mapsto \mathcal{F}_{\mathrm{c}}$ determines an equivalence of $\infty$-categories
  \[
    \D: \Shv(X; \mathcal{C}) \simeq \CoShv(X; \mathcal{C}).
  \]
\end{theorem}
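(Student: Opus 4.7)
The plan is to follow Lurie's strategy by passing through an intermediate $\infty$-category of ``K-sheaves'' --- functors on the poset $K(X)$ of compact subsets of $X$. First, I would verify that $\mathcal{F} \mapsto \mathcal{F}_{\mathrm{c}}$ actually lands in $\CoShv(X; \mathcal{C})$: the recollement of \cref{rem:recollement} applied to $U \cap V \hookrightarrow U \cup V$ (and to $U_i \hookrightarrow \bigcup_i U_i$) expresses $\Gamma_{\mathrm{c}}$ as the fiber of restriction to a closed complement of ordinary sections, and this converts the pullback squares and filtered limits defining sheaves (\cref{def:sheaf}) into the pushout squares and filtered colimits required of cosheaves.

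The heart of the proof is to introduce the $\infty$-category $\mathrm{KShv}(X; \mathcal{C})$ of K-sheaves, i.e., functors $\mathcal{G}: K(X)^{\op} \to \mathcal{C}$ satisfying $\mathcal{G}(\emptyset) \simeq 0$, Mayer--Vietoris $\mathcal{G}(K \cup K') \simeq \mathcal{G}(K) \times_{\mathcal{G}(K \cap K')} \mathcal{G}(K')$, and continuity $\mathcal{G}(K) \simeq \colim_{K \subseteq \mathring{L}} \mathcal{G}(L)$, and to establish mutually inverse equivalences
\[
  \Shv(X; \mathcal{C}) \xrightarrow{\simeq} \mathrm{KShv}(X; \mathcal{C}), \qquad \mathcal{F} \mapsto \Big(K \mapsto \colim_{K \subseteq U,\ U\text{ open}} \mathcal{F}(U)\Big),
\]
with inverse $\mathcal{G} \mapsto (U \mapsto \lim_{K \subseteq U,\ K\text{ compact}} \mathcal{G}(K))$. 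Local compactness of $X$ is essential here, since it guarantees that every compact $K$ inside an open $U$ has a cofinal system of relatively compact open neighborhoods in $U$, which is precisely what makes the two composites invert each other. A parallel construction, using $\CoShv(X; \mathcal{C}) \simeq \Shv(X; \mathcal{C}^{\op})^{\op}$ from the remark after \cref{def:sheaf}, identifies $\CoShv(X; \mathcal{C})$ with the dual $\infty$-category of K-cosheaves. The equivalence $\D$ is then realized as the resulting composite, where under these identifications the compact-support functor $\Gamma_{\mathrm{c}}$ on the interior $\mathring{L}$ is given by the cofiber of a restriction along a compact pair $(L, \partial L)$, matching up the sheaf K-datum with the cosheaf K-datum via excision.

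The main obstacle is establishing the equivalence $\Shv(X; \mathcal{C}) \simeq \mathrm{KShv}(X; \mathcal{C})$ and its cosheaf analogue: one must carefully control the interchange of colimits indexed by open neighborhoods of a compact set and limits indexed by compact subsets of an open set, repeatedly invoking local compactness to produce cofinal filtered subsystems and exploiting the stability of $\mathcal{C}$ to move between fiber and cofiber sequences. Once these two equivalences are in hand, the identification of the induced composite with $\mathcal{F} \mapsto \mathcal{F}_{\mathrm{c}}$ reduces to a direct comparison through excision applied to pairs $(L, L \setminus \mathring{K})$ of compacta.
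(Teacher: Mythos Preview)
Your proposal is correct and follows precisely the K-sheaf strategy of Lurie's \cite[Theorem 5.5.5.1]{lurie2017HA}, which is exactly what the paper invokes: the paper gives no independent argument here and simply cites that reference (together with \cite[Theorem 5.10]{volpe2023operationstopology}). So there is no divergence to discuss --- you have supplied the content behind the citation.
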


\begin{proof}
  See \cite[Theorem 5.5.5.1]{lurie2017HA} or \cite[Theorem 5.10]{volpe2023operationstopology}.
\end{proof}

\begin{construction}
  Let $f: X \to Y$ be a continuous map. The adjunction $f_\ast \dashv f^\ast$ dualizes to an adjunction on the categories of cosheaves
  \[\begin{tikzcd}[sep=2.5em]
    {f_+:\CoShv(Y;\mathcal{C})} & {\CoShv(X;\mathcal{C}): f^+}
    \arrow[""{name=0, anchor=center, inner sep=0}, shift left=2, from=1-1, to=1-2]
    \arrow[""{name=1, anchor=center, inner sep=0}, shift left=2, from=1-2, to=1-1]
    \arrow["\bot"{description}, draw=none, from=1, to=0]
  \end{tikzcd}\]
  Concretely the functor $f_+$ is given by $f_+(\mathcal{F})(U) = \mathcal{F}(f^{-1}(U))$. By Verdier duality (\cref{thm:verdier-duality}), the adjunction $f_+ \dashv f^+$ on cosheaves corresponds to an adjunction $f_! \dashv f^!$ on sheaves. More abstractly, the adjunction $f_! \dashv f^!$ can be viewed as the dual of the adjunction $f_*\dashv f^*$ under the canonical self-duality of categories of sheaves.
\end{construction}

\begin{remark}
  When $f: X \to Y$ is a proper map, we have $f_! = f_*$, and thus obtain an adjoint triple $f^\ast \dashv f_\ast \dashv f^!$ (see \cite[Remark 6.6]{volpe2023operationstopology}). In this case, $f^*$ is strongly continuous.
\end{remark} 
\section{Coefficient system} \label{sec:coefficient-system}

To characterize the universal property of sheaves, we adapt the definition of coefficient systems from \cite[Definition 7.5]{Drew_2022}, originally formulated for finite type schemes over a noetherian scheme of finite Krull dimension, to the setting of locally compact Hausdorff spaces.

\begin{definition}[Coefficient system]\label{def:coeff-system}
  A functor $C: \LCH^{\op} \to \mathrm{CAlg}(\Cat_\infty^{\ex})$ taking values in symmetric monoidal stable $\infty$-categories and exact symmetric monoidal functors is called a \emph{coefficient system} if it satisfies the following properties:

  \begin{enumerate}
    \item
      \begin{enumerate}
        \item (\emph{Pushforwards})
          For every morphism $f:X \to Y$ in $\LCH$, the pullback functor $f^*: C(Y) \to C(X)$ admits a right adjoint $f_*: C(X)\to C(Y)$.
        \item (\emph{Internal homs})
          For every $X\in\LCH$, the symmetric monoidal structure on $C(X)$ is closed.
      \end{enumerate}

    \item
      Whenever $i:X\to Y$ is an \emph{open immersion} in $\LCH$, the functor $i^*:C(Y)\to C(X)$ admits a left adjoint $i_!$, and:
      \begin{enumerate}
        \item (\emph{Base change})
          For each Cartesian square
          \[\begin{tikzcd}
            {X'} & {Y'} \\
            X & Y
            \arrow["{i'}", from=1-1, to=1-2]
            \arrow["{f'}"', from=1-1, to=2-1]
            \arrow["\lrcorner"{anchor=center, pos=0.125}, draw=none, from=1-1, to=2-2]
            \arrow["f", from=1-2, to=2-2]
            \arrow["i", from=2-1, to=2-2]
          \end{tikzcd}\]
          the canonical exchange transformation $i'_!\,(f')^* \to f^*\,i_!$ is an equivalence.
        \item (\emph{Projection formula})
          The canonical map
          \[
            i_!(i^*(-)\otimes -) \to - \otimes i_!(-)
          \]
          is an equivalence of functors $C(Y)\times C(X)\to C(Y)$.
      \end{enumerate}

    \item (\emph{Localization})
      \begin{enumerate}
        \item
          $C(\emptyset) = 0$.
        \item
          For each closed immersion $i: Z\hookrightarrow X$ with open complement $j: U\hookrightarrow X$, and the square
          \[
            \begin{tikzcd}
              C(Z) \ar{r}{i_*} \ar{d} & C(X) \ar{d}{j^*}\\
              0 \ar{r}                & C(U)
            \end{tikzcd}
          \]
          is Cartesian in $\Cat_\infty^{\ex}$.
        \item
          For an increasing, filtered union $U = \bigcup_{i\in I} U_i$ of opens in $X$ we have that
          \[
            C(U) \xrightarrow{\simeq} \lim\nolimits^{\Cat_\infty^{\ex}} C(U_i)
          \]
          where the limit is taken along the functors $(f_{ij})^\ast: D(U_j) \to D(U_i)$.
      \end{enumerate}
  \end{enumerate}
  A morphism of coefficient systems is a natural transformation $\phi: C\to C'$ such that, for each open immersion $i: X'\to X$, the exchange transformation
  \[
    i_! \,\phi_{X'} \to \phi_X \,i_!
  \]
  is an equivalence. This defines a sub-\(\infty\)-category $\CoSys \subseteq \Fun(\LCH^{\op},\,\CAlg(\Cat^{\ex}_{\infty}))$. We define the cocomplete coefficient systems $\CoSys^{\mathrm{c}}$ exactly the same as \cite[Definition 7.7]{Drew_2022}.
\end{definition}

\begin{remark} \label{rem:verdier-sequence-from-localization}
  Let $j: U \hookrightarrow X$ be an open immersion. Then $j^*$ has a fully faithful left adjoint $j_!$. Indeed, consider the following Cartesian square
  \[\begin{tikzcd}
    U & U \\
    U & X
    \arrow["{\id_U}", from=1-1, to=1-2]
    \arrow["{\id_U}"', from=1-1, to=2-1]
    \arrow["\lrcorner"{anchor=center, pos=0.125}, draw=none, from=1-1, to=2-2]
    \arrow["j", from=1-2, to=2-2]
    \arrow["j", from=2-1, to=2-2]
  \end{tikzcd}\]
  By the base change, the unit
  \[
    \id_U \xrightarrow{\simeq} j^* j_!.
  \]
  is an equivalence. By \cite[Lemma A.2.8]{calmès2025hermitianktheorystableinftycategories}, the Cartesian square in the localization axiom is also coCartesian. In this context, the sequence
  \[
    C(Z) \xrightarrow{i_*} C(X) \xrightarrow{j^*} C(U)
  \]
  is called a Verdier sequence, also known as a localization sequence. Moreover, since $j^*$ admits both a left and a right adjoint, this sequence is a split Verdier sequence. For a modern and comprehensive reference on Verdier sequences, see Appendix A of \cite{calmès2025hermitianktheorystableinftycategories}. We will also recall the relevant definitions and further explore these notions in \cref{sec:localizing-invariant}.
\end{remark}

\begin{remark}
  The localization axiom of coefficient systems can be interpreted as the categorical analog of the sheaf axiom (cf. \cref{def:sheaf}) in the context of large $\infty$-categories. For this reason, we refer to it interchangeably as canonical descent.
\end{remark}

We employ the generic construction introduced in \cite{Drew_2022}, specifying to our context of locally compact Hausdorff spaces, to describe the initial object in the coefficient system. The strategy is to embed the $\infty$-category of coefficient systems into the larger $\infty$-category of pullback formalisms.

\begin{convention}\label{conv:coeff-system-context}
  We work with the finitely complete 1-category $\LCH$ and take $I$ to be the class of all open immersions together with a set $\mathcal{L}$ of descent diagrams associated to the open‐cover Grothendieck topology \cite[Example 5.19]{Drew_2022}.

  In the framework of \cite{Drew_2022}, we set $S = \LCH$, $P = I$, $\T = \{(S^1, 1)\}$ and $\mathcal{L}$ as described above (see \cite[Convention 7.1]{Drew_2022}).
\end{convention}

We now briefly review the pullback formalism as presented in \cite{Drew_2022}.

\begin{definition}[{Pullback formalism, \cite[Definition 2.11]{Drew_2022}}]
  A \emph{pullback formalism} is a functor $C: \LCH^{\op} \to \mathrm{CAlg}(\Cat_\infty)$ such that for every open immersion $i:X\to Y$ in $\LCH$, the functor $i^*:C(Y)\to C(X)$ admits a left adjoint $i_!$, and
  \begin{enumerate}
    \item (\emph{Base change})
      For each Cartesian square
      \[\begin{tikzcd}
        {X'} & {Y'} \\
        X & Y
        \arrow["{i'}", from=1-1, to=1-2]
        \arrow["{f'}"', from=1-1, to=2-1]
        \arrow["\lrcorner"{anchor=center, pos=0.125}, draw=none, from=1-1, to=2-2]
        \arrow["f", from=1-2, to=2-2]
        \arrow["i", from=2-1, to=2-2]
      \end{tikzcd}\]
      the canonical exchange transformation $i'_!\,(f')^* \to f^*\,i_!$ is an equivalence.
    \item (\emph{Projection formula})
      The canonical map
      \[
        i_!(i^*(-)\otimes -) \to - \otimes i_!(-)
      \]
      is an equivalence of functors $C(Y)\times C(X)\to C(Y)$.
  \end{enumerate}
  A morphism of pullback formalisms is a natural transformation $\phi: C\to C'$ such that, for each open immersion $i: X'\to X$, the exchange transformation
  \[
    i_! \,\phi_{X'} \to \phi_X \,i_!
  \]
  is an equivalence. This defines a sub-\(\infty\)-category $\PB \subseteq \Fun(\LCH^{\op},\,\CAlg(\Cat_{\infty}))$.
\end{definition}

\begin{convention}
  By imposing different conditions on the category of pullback formalisms $\PB$, we obtain a list of variants:
  \begin{itemize}
    \item
      $\PB^{\mathrm{c}}$, cocomplete pullback formalisms \cite[Definition 4.1]{Drew_2022};
    \item
      $\PB^{\mathrm{c}}_{\mathcal{L}}$, pullback formalisms satisfying the $\mathcal{L}$-descent \cite[Definition 5.4]{Drew_2022};
    \item
      $\PB^{\mathrm{pt}}$, pointed pullback formalisms \cite[Definition 6.1]{Drew_2022};
    \item
      $\PB^{\mathrm{c},\mathrm{pt}}_{\T}$, $\T$-stable pullback formalisms \cite[Definition 6.10]{Drew_2022}.
  \end{itemize}
\end{convention}

\begin{theorem}\label{thm:initial-pullback-formalism}
  We have a sequence of adjunctions between (very large) presentable sub-$\infty$-categories of $\Fun(\LCH^{\op},\CAlg(\Cat_{\infty}))$, their initial objects, and the respective evaluations of these initial objects at a space $X$ as shown below:
  \[\begin{tikzcd}[row sep = small]
    \PB & {\PB^{\mathrm{c}}} & {\PB^{\mathrm{c}}_{\mathcal{L}}} & {\PB^{\mathrm{c},\mathrm{pt}}_{\mathcal{L}}} & {\PB^{\mathrm{c},\mathrm{pt}}_{\mathcal{L},S^1}} \\
    {C_{\mathrm{gm}}} & {\widehat{C}_{\mathrm{gm}}} & {L_{\mathcal{L}}\widehat{C}_{\mathrm{gm}}} & {L_{\mathrm{pt}} L_{\mathcal{L}}\widehat{C}_{\mathrm{gm}}} & {L_{S^1} L_{\mathrm{pt}} L_{\mathcal{L}}\widehat{C}_{\mathrm{gm}}} \\
    {\Open(X)} & {\PSh(X)} & {\Shv(X)} & {\Shv(X)_\ast} & {\Shv(X; \Sp)}
    \arrow[shift left=3, bend left=30, from=1-1, to=1-2]
    \arrow[hook', from=1-2, to=1-1]
    \arrow[shift left=3, bend left=30, from=1-2, to=1-3]
    \arrow[hook', from=1-3, to=1-2]
    \arrow[shift left=3, bend left=30, from=1-3, to=1-4]
    \arrow[hook', from=1-4, to=1-3]
    \arrow[shift left=3, bend left=30, from=1-4, to=1-5]
    \arrow[hook', from=1-5, to=1-4]
    \arrow[maps to, from=2-1, to=2-2]
    \arrow[maps to, from=2-2, to=2-3]
    \arrow[maps to, from=2-3, to=2-4]
    \arrow[maps to, from=2-4, to=2-5]
  \end{tikzcd}\]
  Moreover, the last three right adjoints are fully faithful, thus describing reflexive sub-$\infty$-categories.
\end{theorem}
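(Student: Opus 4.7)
The plan is to transport Drew–Gallauer's general framework [Drew_2022, Sections 2--6] to our context by verifying that the choice $(S, P, \mathcal{L}, \T) = (\LCH, I_{\open}, \mathcal{L}_{\open\text{-cover}}, \{(S^1,1)\})$ of \cref{conv:coeff-system-context} satisfies the hypotheses of each of their structural theorems. Since $\LCH$ is a 1-category with finite limits and the class of open immersions is closed under pullback and composition, their constructions specialize without modification.

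First, I would invoke [Drew_2022, Theorem 2.14] to obtain the initial pullback formalism $C_{\mathrm{gm}}$, whose value at $X$ is the poset $\Open(X)$ equipped with intersection as the symmetric monoidal product, and whose $i_!$ for an open immersion $i: U \hookrightarrow X$ is the inclusion $\Open(U) \hookrightarrow \Open(X)$. Base change and projection formula are automatic at the poset level. Next, applying the free cocompletion [Drew_2022, Theorem 4.14] produces $\widehat{C}_{\mathrm{gm}}$, whose value at $X$ is the presheaf category $\PSh(X) = \Fun(\Open(X)^{\op}, \Space)$, and the first adjoint pair in the display follows from the general adjunction between pullback formalisms and their cocompletions, with the right adjoint (restriction to representables) being fully faithful by the Yoneda lemma.

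The remaining three adjoint pairs come from applying the reflective localizations of [Drew_2022, Theorems 5.14, 6.5, 6.16]. The value-wise description at each stage proceeds as follows: the $\mathcal{L}$-localization with respect to the open-cover descent diagrams is precisely sheafification, converting $\PSh(X)$ into $\Shv(X)$; the pointing localization yields $\Shv(X)_\ast$ by smashing with $S^0$; and finally $S^1$-stabilization of $\Shv(X)_\ast$ gives $\Sp(\Shv(X)_\ast)$, which coincides with $\Shv(X; \Sp)$ via the tensor-product identification $\Shv(X; \Sp) \simeq \Shv(X) \otimes \Sp$ from \cite[Corollary 2.24]{volpe2023operationstopology} together with $\Sp \simeq \Sp(\Space_\ast)$. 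Fully faithfulness of the last three right adjoints is automatic because each is an accessible localization of a presentable $\infty$-category.

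The main obstacle is showing that these pointwise identifications interact coherently with the pullback-formalism structure, namely that the exchange transformations defining morphisms in $\PB$ are preserved under Kan extension, sheafification, pointing, and stabilization. These compatibilities are exactly what Drew–Gallauer establish at each level of their tower in full generality, so our remaining task reduces to verifying that our descent class $\mathcal{L}$ meets their presentability and pullback-stability requirements, which is the content of \cite[Example 5.19]{Drew_2022} applied to the open-cover topology on $\LCH$.
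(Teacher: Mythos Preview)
Your proposal is correct and follows essentially the same approach as the paper: both reduce the statement to Drew--Gallauer's general machinery, specialized to the context of \cref{conv:coeff-system-context}. The paper's proof is a single sentence citing \cite[Theorem 7.3, Construction 7.2]{Drew_2022} as the packaged result, whereas you unpack that package into its constituent steps (their Theorems 2.14, 4.14, 5.14, 6.5, 6.16) and spell out the pointwise identifications at each stage; this is exactly the content of their Construction 7.2.

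One small inaccuracy: you describe the right adjoint $\PB^{\mathrm{c}} \to \PB$ as ``restriction to representables'' and claim it is fully faithful by Yoneda. This conflates two levels---the Yoneda embedding $\Open(X) \hookrightarrow \PSh(X)$ lives at the level of values, not at the level of pullback formalisms. The right adjoint in question is the forgetful inclusion of cocomplete pullback formalisms into all pullback formalisms, which is \emph{not} full (morphisms in $\PB^{\mathrm{c}}$ are required to be cocontinuous). This is precisely why the theorem singles out only the \emph{last three} right adjoints as fully faithful; your argument should not claim more.
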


\begin{proof}
  It is a special case of \cite[Theorem 7.3]{Drew_2022} specifying to \cref{conv:coeff-system-context}, see \cite[Construction 7.2]{Drew_2022} for the detailed construction for each step.
\end{proof}

\begin{proposition}\label{prop:sheaf-is-cosys}
  The assignment $X \mapsto \Shv(X; \Sp)$, endowed with the contravariant pullback functoriality, defines a coefficient system.
\end{proposition}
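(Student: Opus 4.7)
The plan is to verify each clause of \cref{def:coeff-system} in order, by assembling the results already collected in \cref{sec:category-of-sheaves}. First, I would confirm that $X \mapsto \Shv(X;\Sp)$ indeed lands in $\CAlg(\Cat_\infty^{\ex})$: for each $X$ the $\infty$-category $\Shv(X;\Sp)$ is presentable stable and carries the sheafified pointwise symmetric monoidal structure, which is closed (thereby supplying the internal homs demanded by axiom (1b)); the pullback functors $f^*$ are symmetric monoidal left adjoints between stable $\infty$-categories and so automatically exact.

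For axioms (1a) and (2), \cref{cons:sheaf-of-sheaves} furnishes the right adjoint $f_*$ for an arbitrary continuous $f$, together with the extension-by-zero left adjoint $i_!$ to $i^*$ whenever $i$ is an open immersion. The base change (2a) and the projection formula (2b) for open immersions are then precisely the compatibilities established by Volpe in \cite{volpe2023operationstopology}, which are the framework on which the entire discussion of $\Sp$-valued sheaves on $\LCH$ in \cref{sec:category-of-sheaves} rests.

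For the localization axiom (3), part (a) is \cref{prop:sheaf-of-sheaves}(1), and part (c), the filtered-union descent, is \cref{prop:sheaf-of-sheaves}(3); one should simply verify that the limit in that proposition is taken in $\Cat_\infty^{\ex}$ along pullback functors, matching the convention of \cref{def:coeff-system}(3c). The only step requiring a small unpacking is the closed-open square (3b), which I would deduce from the recollement fiber sequence
\[
  \Shv(X\setminus U;\Sp) \xrightarrow{i_*} \Shv(X;\Sp) \xrightarrow{j^*} \Shv(U;\Sp)
\]
of \cref{rem:recollement}: a fiber sequence in $\Cat_\infty^{\ex}$ is, tautologically, the same datum as a Cartesian square with $0$ in the lower-left corner, which is exactly what axiom (3b) requires.

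Since every ingredient is already available from \cref{sec:category-of-sheaves}, I do not anticipate any genuine obstacle; the proof is essentially a dictionary translation. The one point to be careful about is that every adjoint, Cartesian square, and limit must be interpreted in the correct ambient $\infty$-category (namely $\Cat_\infty^{\ex}$, not merely $\PrL$ or $\Cat_\infty$), so that the verification matches \cref{def:coeff-system} on the nose.
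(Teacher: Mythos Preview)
Your proposal is correct and matches the paper's proof in its essentials: both reduce localization to \cref{prop:sheaf-of-sheaves} and \cref{rem:recollement}. The one difference worth noting is that for axiom~(2) the paper does not cite Volpe directly but instead invokes \cref{thm:initial-pullback-formalism}, where $\Shv(-;\Sp)$ has already been exhibited as (the value of the initial object of) $\PB^{\mathrm{c},\mathrm{pt}}_{\mathcal{L},S^1}$; since pullback formalisms have base change and the projection formula built into their definition, these hold ``by construction.'' Your route via Volpe is equally valid and more self-contained, while the paper's route is shorter given the surrounding machinery.
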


\begin{proof}
  The base change and projection formula hold by construction since $\Shv(-; \Sp)$ is a pullback formalism. The localization follows directly from \cref{prop:sheaf-of-sheaves,rem:recollement}.
\end{proof}

\begin{theorem}\label{thm:sheaf-initial-cosys}
  The object $\Shv(-; \Sp) \in \CoSys^{\mathrm{c}}$ is initial.
\end{theorem}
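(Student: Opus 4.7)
The plan is to reduce to \cref{thm:initial-pullback-formalism}, which already identifies $\Shv(-;\Sp)$ as the initial object of the larger $\infty$-category $\PB^{\mathrm{c},\mathrm{pt}}_{\mathcal{L},S^1}$. By \cref{prop:sheaf-is-cosys}, $\Shv(-;\Sp)$ lies in $\CoSys^{\mathrm{c}}$, so it suffices to exhibit a fully faithful inclusion
\[
  \CoSys^{\mathrm{c}} \hookrightarrow \PB^{\mathrm{c},\mathrm{pt}}_{\mathcal{L},S^1};
\]
initiality in the larger $\infty$-category then transfers to initiality in the smaller one, since the essentially unique map from $\Shv(-;\Sp)$ to any $D \in \CoSys^{\mathrm{c}}$ produced in the ambient pullback-formalism world automatically lies in the subcategory.

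The object-level inclusion is essentially immediate from the axioms: stability of each $C(X)$ gives pointedness together with $S^1$-stability via the loop--suspension equivalence; cocompleteness and colimit-preservation of $f^*$ are part of the definition of $\CoSys^{\mathrm{c}}$; and base change together with the projection formula for open immersions is literally the same axiom in both settings. On morphisms, both $\CoSys^{\mathrm{c}}$ and $\PB^{\mathrm{c},\mathrm{pt}}_{\mathcal{L},S^1}$ are cut out of a functor category by colimit-preserving natural transformations satisfying the $i_!$-exchange condition, and since any colimit-preserving functor between stable $\infty$-categories is automatically exact, the apparent discrepancy between $\CAlg(\Cat_\infty^{\ex})$ and $\CAlg(\Cat_\infty)$ as targets does not affect the space of morphisms. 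Hence the inclusion is fully faithful.

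The step I expect to be the main obstacle is the verification of the $\mathcal{L}$-descent condition, i.e., open-cover descent, from the comparatively terse localization axiom. The filtered-union clause in part (c) of the localization reduces general open covers to finite ones, and a standard \v{C}ech argument further reduces finite covers to the two-open Mayer--Vietoris square. For the latter, given opens $U,V \subseteq X$ with $U \cup V = X$, set $Z = X \setminus U = V \setminus (U \cap V)$, which is closed in both $X$ and $V$. Applying the recollement axiom in part (b) of the localization twice yields two fiber sequences
\[
  C(Z) \xrightarrow{i_*} C(X) \xrightarrow{j^*} C(U), \qquad C(Z) \xrightarrow{i'_*} C(V) \xrightarrow{j'^*} C(U \cap V)
\]
in $\Cat_\infty^{\ex}$, and comparing fibers (as in \cref{rem:recollement}) gives the pullback square
\[
  C(X) \simeq C(U) \times_{C(U \cap V)} C(V).
\]
This derivation is entirely formal in the stable context, but is the one place where the asymmetry between the abstract $\mathcal{L}$-descent and the concrete recollement axiom must be bridged; once this is in place, the proof concludes.
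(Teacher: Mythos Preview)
Your overall strategy matches the paper's exactly: both reduce to \cref{thm:initial-pullback-formalism} by exhibiting $\CoSys^{\mathrm{c}}$ as a full subcategory of $\PB^{\mathrm{c},\mathrm{pt}}_{\mathcal{L},S^1}$, and both identify open-cover descent as the only nontrivial verification. Where you diverge is in how descent is extracted from localization. The paper translates the internal condition $\colim_{\Delta^{+,\op}} u(i)_! u(i)^* M \to M$ via Yoneda into the statement that a certain $\mathcal{S}$-valued functor $F_{U,M,N}$ is a sheaf on $U$, and then appeals to localization. You instead derive the categorical Mayer--Vietoris pullback $C(X) \simeq C(U) \times_{C(U\cap V)} C(V)$ directly from recollement.

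There is a small gap in your formulation worth flagging: the $\mathcal{L}$-descent condition in Drew--Gallauer (cf.\ \cite[Remark~5.16]{Drew_2022}, invoked in the paper's proof) is an \emph{internal} condition on objects of $C(X)$, not the sheaf condition on the assignment $X \mapsto C(X)$. Your categorical pullback is the latter. What is actually needed is the object-level pushout
\[
  M \;\simeq\; j_{U,!}j_U^*M \;\amalg_{\,j_{U\cap V,!}j_{U\cap V}^*M\,}\; j_{V,!}j_V^*M
\]
for each $M \in C(X)$. This follows from the recollement cofiber sequence $j_!j^* \to \id \to i_*i^*$ (a consequence of the split Verdier sequence in the localization axiom, cf.\ \cref{rem:verdier-sequence-from-localization} and \cite[Remark~A.2.9]{calmès2025hermitianktheorystableinftycategories}) by precisely the ``comparing fibers'' move you already describe---just carried out inside $C(X)$ rather than in $\Cat_\infty^{\ex}$. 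Once stated at that level, your argument is complete and arguably more direct than the paper's route through $F_{U,M,N}$.
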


\begin{proof}
  We adapt the proof of \cite[Proposition 7.13]{Drew_2022}. Since the pullback formalism $\Shv(-; \Sp)$ is an object of $\CoSys^{\mathrm{c}}$, by \cref{thm:initial-pullback-formalism}, it suffices to show that $\CoSys^{\mathrm{c}}$ is a full subcategory of $\PB^{\mathrm{c},\mathrm{pt}}_{\mathcal{L},S^1}$. The only non-trivial statement that remains to prove is that if $C$ is a cocomplete coefficient system, then it satisfies open-cover descent.

  Let $u: \Delta^{+,\op} \to \LCH$ be a \v{C}ech semi-nerve associated to an open cover of $U$ and $M \in C(U)$. It suffices to prove that the morphism
  \begin{equation}\label{equ:descent-colimit}
    \colim_{i\in \Delta^{+,\op}} u(i)_! u(i)^* M \to M
  \end{equation}
  is an equivalence by \cite[Remark 5.16]{Drew_2022}. By the Yoneda lemma, it is equivalent to show the functor $\Map_{C(U)}(-, N)$ takes (\ref{equ:descent-colimit}) to an equivalence for an arbitrary $N \in C(U)$. Consider the composite
  \[
    F_{U,M,N}\colon
    \LCH^{\op}
    \;\xrightarrow{[-]}\;
    C(U)^{\op}
    \;\xrightarrow{-\otimes M}\;
    C(U)^{\op}
    \;\xrightarrow{\Map_{\mathcal C(U)}(-,N)}\;
    \Space
  \]
  where the first functor $[-]$ is induced by the essentially unique morphism of pullback formalisms, and sends $i: V \to U$ to $i_!i^* 1_{C(U)}$ (see \cite[Theorem 3.26]{Drew_2022}). It follows that $C$ satisfies open-cover descent if and only if $F_{U,M,N}$ has the open-cover descent property for all $U,M,N$. Note that the open-cover descent property of $F_{U,M,N}$ means precisely that $F_{U,M,N}$ is a sheaf on $U$. Translating back, it suffices to show that $C$ satisfies a `sheaf'-property, and this follows easily from the localization property.
\end{proof}

\begin{remark}
  For any $C\in\CoSys^{\mathrm{c}}$, the unique morphism $[-]: \Shv(-; \Sp) \to C$ can be described explicitly. Let $X$ be a locally compact Hausdorff space and $i: U \to X$ be an open immersion. Then
  \[
    \big[\Sigma^\infty_+ \Map(-, U)\big] = i_! i^* 1_{C(X)}.
  \]
  Moreover, since $\Shv(X; \Sp)$ is generated by representable spectral sheaves under filtered colimits, the above formula uniquely determines the morphism $[-]$.
\end{remark}

\begin{proposition}\label{prop:free-forgetful-cosys-adjunction}
  Let $U: \CoSys^{\mathrm{c}} \to \CAlg(\Cat^{\ex, \mathrm{c}}_{\infty})$ denote the forgetful functor given by evaluation at the point. Then $U$ is right adjoint to the functor
  \[
    F: \CAlg(\Cat^{\ex, \mathrm{c}}_{\infty}) \to \CoSys^{\mathrm{c}},\quad \mathcal{C} \mapsto \Shv(X; \Sp) \otimes \mathcal{C}.
  \]
  In other words, the assignment $X \mapsto \Shv(X; \Sp) \otimes \mathcal{C}$ exhibits the free coefficient system generated by the cocomplete symmetric monoidal stable $\infty$-category $\mathcal{C}$. In particular, when $\mathcal{C}$ is presentable, the assignment $X \mapsto \Shv(-; \mathcal{C})$ is the free coefficient system generated by $\mathcal{C}$.
\end{proposition}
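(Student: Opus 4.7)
The plan is to establish the universal property by combining the initiality in \cref{thm:sheaf-initial-cosys} with the universal property of the tensor product in $\CAlg(\Cat^{\ex,\mathrm{c}}_\infty)$. First, I would verify that $F(\mathcal{C}) := \Shv(-;\Sp)\otimes\mathcal{C}$ genuinely lies in $\CoSys^{\mathrm{c}}$. The functor $-\otimes\mathcal{C}$ on cocomplete stable symmetric monoidal $\infty$-categories is a symmetric monoidal left adjoint, so the pullback functoriality, closedness, existence of left adjoints for open immersions, base change, and projection formula transport directly from $\Shv(-;\Sp)$. For canonical descent, the recollement Cartesian square and the filtered-limit equivalence of \cref{def:coeff-system}(3) are preserved because $-\otimes\mathcal{C}$ commutes with the relevant limits when applied to dualizable inputs such as $\Shv(X;\Sp)$ (cf.\ \cref{sec:category-of-sheaves}).

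Next I would construct the bijection
\[
  \Map_{\CoSys^{\mathrm{c}}}(F(\mathcal{C}), D) \;\simeq\; \Map_{\CAlg(\Cat^{\ex,\mathrm{c}}_\infty)}(\mathcal{C}, D(\pt)).
\]
The forward map is evaluation at $\pt$, using the canonical identification $F(\mathcal{C})(\pt) \simeq \Sp\otimes\mathcal{C} \simeq \mathcal{C}$. For the inverse, given $\phi:\mathcal{C}\to D(\pt)$, invoke \cref{thm:sheaf-initial-cosys} to obtain the essentially unique morphism $[-]:\Shv(-;\Sp)\to D$, and for each $X$ combine $[-]_X:\Shv(X;\Sp)\to D(X)$ with $p_X^{*}\circ\phi:\mathcal{C}\to D(X)$ (where $p_X : X \to \pt$) via the symmetric monoidal structure on $D(X)$. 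The universal property of the tensor in $\CAlg(\Cat^{\ex,\mathrm{c}}_\infty)$ then produces the desired functor $\Shv(X;\Sp)\otimes\mathcal{C}\to D(X)$. Naturality in $X$ and compatibility with the open-immersion exchange transformations follow from the corresponding properties of $[-]$ and of pullback under $p_X^{*}$.

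To see that these are mutually inverse, the round trip starting from $\phi$ recovers $\phi$ because $[-]_{\pt}$ is symmetric monoidal and therefore sends the unit of $\Sp$ to the unit of $D(\pt)$, so the tensor absorbs this factor. The round trip starting from a morphism $\Psi:F(\mathcal{C})\to D$ uses that $\Psi$ is determined by its restrictions to the two tensor factors: the restriction to $\Shv(-;\Sp)$ is forced to equal $[-]$ by initiality, while the restriction of $\mathcal{C}$ evaluated at $\pt$ is precisely what evaluation produces. The final sentence of the proposition, concerning presentable $\mathcal{C}$, then follows from $\Shv(X)\otimes \mathcal{C}\simeq \Shv(X;\mathcal{C})$ recalled in \cref{sec:category-of-sheaves}.

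The main obstacle I anticipate is the first step: checking that canonical descent genuinely survives pointwise tensoring with $\mathcal{C}$, particularly the compatibility of $-\otimes\mathcal{C}$ with the Cartesian square in $\Cat^{\ex}_\infty$ coming from the recollement and with filtered limits along open inclusions. Once that is in hand, the adjunction itself is essentially formal, arising from initiality combined with the universal property of $-\otimes\mathcal{C}$ in $\CAlg(\Cat^{\ex,\mathrm{c}}_\infty)$.
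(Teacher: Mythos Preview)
Your approach is essentially the same as the paper's. You construct the map $\Shv(X;\Sp)\otimes\mathcal{C}\to D(X)$ exactly as the paper does, by pairing the initial morphism $[-]_X$ with $p_X^*\circ\phi$ via the multiplication on $D(X)$; the only difference is packaging, since the paper phrases this as a unit/counit construction (with identity unit and the above map as counit) rather than as a direct mapping-space bijection. The paper's main effort goes into the step you call ``compatibility with the open-immersion exchange transformations'': it unfolds this into an explicit cube diagram whose back face commutes via proper/open base change for the diagonal and projection maps, so you should expect that verification to require more than one sentence. Conversely, your first step (checking that $\Shv(-;\Sp)\otimes\mathcal{C}$ is again in $\CoSys^{\mathrm{c}}$, in particular that localization survives tensoring) is a point the paper passes over in silence, so your instinct that this deserves care is well placed.
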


\begin{proof}
  The unit is the identity. For the counit, let $C\in\CoSys^{\mathrm{c}}$ and denote $\mathcal{C} = C(\pt)$. We need to construct an natural transformation
  \[
    \epsilon: \Shv(-; \Sp) \otimes \mathcal{C} \to C.
  \]
  Note that for every locally compact Hausdorff space $X$, we can view $C(X)$ as an algebra over $\mathcal{C}$ via $p^*: C(\pt) \to C(X)$ where $p: X \to \pt$ is the unique map. We then define the desired morphism
  \[
    \epsilon_X: \Shv(X; \Sp) \otimes \mathcal{C} \to C(X)
  \]
  as the composition of the tensor product of the unique map $\Shv(X; \Sp) \to C(X)$ with the algebra map $p^*: C(\pt) \to C(X)$, followed by the multiplication map in $C(X)$. In addition, we need to verify the morphism $\epsilon$ is compatible with $i_!$ for every open immersion $i: U \to X$, i.e., the exchange transformation $i_! \,\epsilon_U \to \epsilon_X\,i_!$ is an equivalence. It is equivalent to show the following outer square commutes.
  \[\begin{tikzcd}[sep=large]
    {\Shv(U;\Sp)\otimes \mathcal{C}} & {C(U)\otimes \mathcal{C}} & {C(U)\otimes C(U)} & {C(U)} \\
    {\Shv(X;\Sp)\otimes \mathcal{C}} & {C(X)\otimes \mathcal{C}} & {C(X)\otimes C(X)} & {C(X)}
    \arrow["{[-]\otimes\id_{\mathcal{C}}}", from=1-1, to=1-2]
    \arrow["{i_!\otimes\id_{\mathcal{C}}}"', from=1-1, to=2-1]
    \arrow["{\id_{C(U)} \otimes p^*}", from=1-2, to=1-3]
    \arrow["{i_!\otimes\id_{\mathcal{C}}}"', from=1-2, to=2-2]
    \arrow[from=1-3, to=1-4]
    \arrow["{i_!}", from=1-4, to=2-4]
    \arrow["{[-]\otimes\id_{\mathcal{C}}}", from=2-1, to=2-2]
    \arrow["{\id_{C(U)} \otimes p^*}", from=2-2, to=2-3]
    \arrow[from=2-3, to=2-4]
  \end{tikzcd}\]
  The above left square clearly commutes, so it suffices to show that the above right square commutes. Now consider the following diagram
  \begingroup
    \setlength\abovedisplayskip{2em}
    \setlength\belowdisplayskip{2em}
    \[
    \begin{tikzcd}[nodes=overlay, row sep=3.5em, column sep=6em]
      & {C(U \times \pt)} && {C(U \times U)} && {C(U)} \\
      {C(U)\otimes \mathcal{C}} && {C(U)\otimes C(U)} && {C(U)} \\
      & {C(X \times \pt)} && {C(X \times X)} && {C(X)} \\
      {C(X)\otimes \mathcal{C}} && {C(X)\otimes C(X)} && {C(X)}
      \arrow["{\pi_1^*}", from=1-2, to=1-4]
      \arrow["{i_!}"{pos=0.7}, from=1-2, to=3-2]
      \arrow["{\Delta^*}"{pos=0.66}, from=1-4, to=1-6]
      \arrow["{\Delta^*}"{pos=0.66}, from=3-4, to=3-6]
      \arrow["{i_!}"{pos=0.7}, from=1-6, to=3-6]
      \arrow[from=2-1, to=1-2]
      \arrow["{\id_{C(U)} \otimes p^*}", crossing over, from=2-1, to=2-3]
      \arrow["{i_!\otimes\id_{\mathcal{C}}}"', from=2-1, to=4-1]
      \arrow[from=2-3, to=1-4]
      \arrow[from=2-3, to=2-5]
      \arrow["{=}"{description}, from=2-5, to=1-6]
      \arrow["{i_!}"{pos=0.7}, crossing over, from=2-5, to=4-5]
      \arrow["{\pi_1^*}", from=3-2, to=3-4]
      \arrow[from=4-1, to=3-2]
      \arrow["{\id_{C(U)} \otimes p^*}", from=4-1, to=4-3]
      \arrow[from=4-3, to=3-4]
      \arrow[from=4-3, to=4-5]
      \arrow["{=}"{description}, from=4-5, to=3-6]
    \end{tikzcd}
    \]
  \endgroup
  then the commutativity of the front face of the cube follows from the commutativity of all five other faces. The triangle identities are straightforward to verify.
\end{proof}

\begin{corollary}\label{cor:factor-initial-cosys-map}
  Let $C\in\CoSys^{\mathrm{c}}$ and denote $\mathcal{C} = C(\pt)$. The unique morphism $[-]: \Shv(-; \Sp) \to C$ factors through the counit $\epsilon: \Shv(-; \Sp) \otimes \mathcal{C} \to C$.
  \[\begin{tikzcd}[row sep=large]
    {\Shv(-; \Sp) } & C \\
    {\Shv(-; \Sp) \otimes \mathcal{C} }
    \arrow["{[-]}"{description}, from=1-1, to=1-2]
    \arrow["{[-]}"{description}, from=1-1, to=2-1]
    \arrow["\epsilon"', from=2-1, to=1-2]
  \end{tikzcd}\]
\end{corollary}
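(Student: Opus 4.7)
The plan is to derive the factorization as a formal consequence of the initiality of $\Shv(-;\Sp)$ in $\CoSys^{\mathrm{c}}$ established in \cref{thm:sheaf-initial-cosys}. The key observation is that by \cref{prop:free-forgetful-cosys-adjunction}, the free coefficient system $\Shv(-;\Sp)\otimes \mathcal{C}$ itself lies in $\CoSys^{\mathrm{c}}$, so all three vertices of the claimed triangle are objects of this $\infty$-category and the relevant morphisms can be compared there.

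First I would apply initiality to produce the essentially unique morphism $[-]:\Shv(-;\Sp)\to \Shv(-;\Sp)\otimes \mathcal{C}$ in $\CoSys^{\mathrm{c}}$; this is the vertical arrow in the diagram. Composing it with the counit $\epsilon$ constructed in the proof of \cref{prop:free-forgetful-cosys-adjunction} yields a morphism $\epsilon\circ[-]:\Shv(-;\Sp)\to C$ which again lies in $\CoSys^{\mathrm{c}}$, since the compatibility with $i_!$ for open immersions is closed under composition. I would then invoke initiality a second time: the mapping space $\Map_{\CoSys^{\mathrm{c}}}(\Shv(-;\Sp),C)$ is contractible, so the composite $\epsilon\circ[-]$ is canonically equivalent to the unique morphism $[-]:\Shv(-;\Sp)\to C$, giving the desired commutative triangle.

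The argument is purely formal and there is no real obstacle—essentially the entire content of the corollary is already packaged into \cref{thm:sheaf-initial-cosys} and the construction of $\epsilon$ in \cref{prop:free-forgetful-cosys-adjunction}. The only mild subtlety worth flagging is notational: the symbol $[-]$ is overloaded to denote the essentially unique morphism out of the initial object $\Shv(-;\Sp)$ into whichever coefficient system is in view, and the commutativity of the triangle is precisely the statement that these uniquely determined morphisms are compatible with the counit $\epsilon$.
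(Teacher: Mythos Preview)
Your proposal is correct and is exactly the argument the paper intends: the corollary is stated without proof because it is an immediate formal consequence of \cref{thm:sheaf-initial-cosys} and \cref{prop:free-forgetful-cosys-adjunction}, precisely as you spell out.
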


\begin{corollary}\label{cor:free-cosys-over-base}
  The object $\Shv(-; \Sp) \otimes \mathcal{B}$ is initial in the category $\CoSys^{\mathrm{c}}_{\mathcal{B}}$, where $\CoSys^{\mathrm{c}}_{\mathcal{B}}$ denotes the full subcategory of $\CoSys^{\mathrm{c}}$ consisting of cocomplete coefficient systems valued in algebras over the base $\mathcal{B} \in \CAlg(\Cat^{\ex, \mathrm{c}}_{\infty})$.
\end{corollary}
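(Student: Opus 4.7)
The plan is to derive this corollary as a formal consequence of the free–forgetful adjunction established in \cref{prop:free-forgetful-cosys-adjunction}, by identifying $\CoSys^{\mathrm{c}}_{\mathcal{B}}$ with the undercategory $(\CoSys^{\mathrm{c}})_{F(\mathcal{B})/}$ along the free functor $F(\mathcal{B}) = \Shv(-;\Sp) \otimes \mathcal{B}$.

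The first step is to unpack the definition of $\CoSys^{\mathrm{c}}_{\mathcal{B}}$: an object is a coefficient system $C \in \CoSys^{\mathrm{c}}$ equipped with a morphism $\eta: \mathcal{B} \to C(\pt)$ in $\CAlg(\Cat^{\ex, \mathrm{c}}_{\infty})$, and morphisms between $(C, \eta)$ and $(C', \eta')$ are morphisms in $\CoSys^{\mathrm{c}}$ compatible with the structure maps. Since the unit of the adjunction $F \dashv U$ is the identity, we have $F(\mathcal{B})(\pt) = \mathcal{B}$, so $F(\mathcal{B}) = \Shv(-;\Sp) \otimes \mathcal{B}$ is tautologically an object of $\CoSys^{\mathrm{c}}_{\mathcal{B}}$ with structure map $\id_{\mathcal{B}}$.

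The second step is to invoke the adjunction. For any $(C, \eta) \in \CoSys^{\mathrm{c}}_{\mathcal{B}}$, the equivalence
\[
  \Map_{\CoSys^{\mathrm{c}}}(F(\mathcal{B}), C) \simeq \Map_{\CAlg(\Cat^{\ex, \mathrm{c}}_{\infty})}(\mathcal{B}, C(\pt))
\]
produces a canonical morphism $\tilde{\eta}: F(\mathcal{B}) \to C$ corresponding to $\eta$, which by construction satisfies $U(\tilde{\eta}) = \eta$ and is therefore a morphism in the undercategory. For uniqueness, any morphism $\phi: F(\mathcal{B}) \to C$ in $\CoSys^{\mathrm{c}}_{\mathcal{B}}$ must satisfy $U(\phi) = \eta$ by definition of the undercategory, so the adjunction determines $\phi$ up to contractible choice, yielding $\phi \simeq \tilde{\eta}$. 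Hence $\Map_{\CoSys^{\mathrm{c}}_{\mathcal{B}}}(F(\mathcal{B}), C)$ is contractible, proving initiality.

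Since the argument is essentially formal once \cref{prop:free-forgetful-cosys-adjunction} is in place, I do not expect any genuine obstacle. The one point deserving care is the $\infty$-categorical identification of $\CoSys^{\mathrm{c}}_{\mathcal{B}}$ with $(\CoSys^{\mathrm{c}})_{F(\mathcal{B})/}$, not merely at the level of objects but coherently as mapping spaces; this is precisely what naturality of the adjunction equivalence delivers, so once it is spelled out the corollary is immediate.
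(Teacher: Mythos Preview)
Your proposal is correct and matches the paper's approach: the corollary is stated without proof because it is precisely the formal consequence of the adjunction in \cref{prop:free-forgetful-cosys-adjunction} that you spell out, namely the identification of $\CoSys^{\mathrm{c}}_{\mathcal{B}}$ with the slice $(\CoSys^{\mathrm{c}})_{F(\mathcal{B})/}$ under which $F(\mathcal{B}) = \Shv(-;\Sp)\otimes\mathcal{B}$ is tautologically initial. The one point you flag as deserving care (the coherent $\infty$-categorical identification of the slice) is indeed the only content, and your appeal to naturality of the adjunction equivalence handles it.
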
 
\section{Universal six-functor formalism} \label{sec:universal-six-functor-formalism}

As established in \cref{thm:sheaf-initial-cosys}, the sheaf is initial in the category of cocomplete coefficient systems. In this section, we present our main theorem, which extends this universal property to the $\infty$-category of continuous six-functor formalisms. Before delving into the proof, we recall a more formal description of the data that constitute a six-functor formalism. Specifically, we adopt the framework of Nagata six-functor formalisms developed in \cite{dauser2025uniquenesssixfunctorformalisms}.

\begin{definition}[{Nagata set‐up, \cite[Definition 2.1]{dauser2025uniquenesssixfunctorformalisms}}]
  A \emph{Nagata set‐up} $(\mathcal{C},E,I,P)$ is a geometric set‐up $(\mathcal{C},E)$ as in \cite[Appendix A.5]{mann2022padic}, together with subsets $I,P\subseteq E$ satisfying\footnote{We omit the mild truncation assumption, which automatically holds in the 1-categorical context, e.g., the category of locally compact Hausdorff spaces. See \cite[Definition 2.1]{dauser2025uniquenesssixfunctorformalisms} for the complete statement.}
  \begin{enumerate}
  \item
    any morphism $f\in E$ factors as $\bar f\circ j$ with $j\in I$ and $\bar f\in P$;
  \item
    $(C,I)$ and $(C,P)$ are geometric set‐ups;
  \item
    for morphisms $f:X\to Y$ in $\mathcal{C}$ and $g:Y\to Z$ in $I$, we have $f\in I$ if and only if $ g\circ f\in I$;
  \item
    for morphisms $f:X\to Y$ in $\mathcal{C}$ and $g:Y\to Z$ in $P$, we have $f\in P$ if and only if $ g\circ f\in P$;
  \end{enumerate}
\end{definition}

\begin{example}
  The tuple
  \[
    \big(\LCH,\;\{\text{all morphisms}\},\;\{\text{open immersions}\},\;\{\text{proper maps}\}\big)
  \]
  constitutes a Nagata set-up.
\end{example}

Consider the following commutative square (up to a specified isomorphism $j\circ f\cong h\circ g$). Suppose that
\begin{equation}\label{sq:adjoinable}
  \begin{tikzcd}
    A & B \\
    C & D
    \arrow["f", from=1-1, to=1-2]
    \arrow["g"', from=1-1, to=2-1]
    \arrow["j"', from=1-2, to=2-2]
    \arrow["h", from=2-1, to=2-2]
  \end{tikzcd}
\end{equation}
\begin{itemize}
  \item
    $g$ and $j$ admit right adjoints $g\dashv g^R$ and $j\dashv j^R$.  Then by composing units and counits one obtains a canonical transformation
    \[
      \alpha: f g^R \to  j^R j f g^R \simeq j^R h g g^R \to j^R g.
    \]
  \item
    Dually, if $g$ and $j$ admit left adjoints $g^L\dashv g$ and $j^L\dashv j$, then there is a canonical transformation
    \[
      \beta: j^L h \to j^L hg g^L \simeq  j^L j f g^L \to f g^L.
    \]
\end{itemize}

\begin{definition}[Adjoinable square]
  We say that the square (\ref{sq:adjoinable}) above is
  \begin{itemize}
    \item
      \emph{vertically right-adjoinable} if $g,j$ admit right adjoints and $\alpha: f g^R \to j^R g$ is an equivalence.
    \item
      \emph{vertically left-adjoinable} if $g,j$ admit left adjoints and $\beta: j^L h \to f g^L$ is an equivalence.
  \end{itemize}
  Horizontally right- or left-adjoinable squares are defined analogously by applying the same criterion to the horizontal adjoints.
\end{definition}

\begin{definition}[{Beck–Chevalley functor, \cite[Definition 3.1]{dauser2025uniquenesssixfunctorformalisms}}]\label{def:bcfun}
  Let $(\mathcal{C},E,I,P)$ be a Nagata set‐up. Define
  \[
    \BCFun(\mathcal{C},E,I,P)\subseteq\Fun(\mathcal{C}^{\op},\Cat_\infty)
  \]
  to be the subcategory of those functors $D: \mathcal{C}^{\op}\to\Cat_\infty$ such that for every cartesian square
  \[
    \begin{tikzcd}
      {X'} & X \\
      Y' & Y
      \arrow["{\bar g}", from=1-1, to=1-2]
      \arrow["{\bar f}"', from=1-1, to=2-1]
      \arrow["{f}"', from=1-2, to=2-2]
      \arrow["{g}", from=2-1, to=2-2]
      \arrow["\lrcorner"{anchor=center, pos=0.125}, draw=none, from=1-1, to=2-2]
    \end{tikzcd}
  \]
  and its image under $D$
  \begin{equation}\label{sq:cartesian-adjoinable-condition}
    \begin{tikzcd}
      {D(X')} & {D(X)} \\
      {D(Y')} & {D(Y)}
      \arrow["{\bar g^*}"', from=1-2, to=1-1]
      \arrow["{\bar f^*}", from=2-1, to=1-1]
      \arrow["{f^*}", from=2-2, to=1-2]
      \arrow["{g^*}"', from=2-2, to=2-1]
    \end{tikzcd}
  \end{equation}
  \begin{enumerate}
    \item (\emph{Proper base‐change})
      if $f\in P$, then $f^*,\bar f^*$ admit right adjoints $f_*,\bar f_*$ respectively, and the square (\ref{sq:cartesian-adjoinable-condition}) is vertically right‐adjointable.
    \item (\emph{Étale base-change})
      If $f\in I$, then $f^*,\bar f^*$ admit left adjoints $f_\natural,\bar f_\natural$ respectively, and the square (\ref{sq:cartesian-adjoinable-condition}) is vertically left‐adjointable.
    \item (\emph{Mixed Beck–Chevalley})
      Whenever $f\in I$ and $g\in P$, the vertical left-adjoint of the square (\ref{sq:cartesian-adjoinable-condition})
      \[\begin{tikzcd}
        {D(X')} & {D(X)} \\
        {D(Y')} & {D(Y)}
        \arrow["{\bar f_\natural}"', from=1-1, to=2-1]
        \arrow["{\bar g^*}"', from=1-2, to=1-1]
        \arrow["{f_\natural}"', from=1-2, to=2-2]
        \arrow["{g^*}"', from=2-2, to=2-1]
      \end{tikzcd}\]
      is horizontally right-adjoinable, i.e., the canonical exchange transformation
      \[
        f_\natural\,g^*\xrightarrow{\simeq}g^*\,\bar f_\natural
      \]
      is an equivalence.
  \end{enumerate}
  A natural transformation $\phi: D\to D'$ lies in $\BCFun(\mathcal{C},E,I,P)$ if the square
  \[\begin{tikzcd}
    {D(Y)} & {D(Y')} \\
    {D(X)} & {D(X')}
    \arrow["{\phi_Y}", from=1-1, to=1-2]
    \arrow["{f^*}"', from=2-1, to=1-1]
    \arrow["{\phi_X}", from=2-1, to=2-2]
    \arrow["{f^*}"', from=2-2, to=1-2]
  \end{tikzcd}\]
  is vertically right-adjoinable if $f$ is in $P$, and vertically left-adjoinable if $f$ is in $I$. Equivalently, for each $i: X\to Y$ in $I$, the exchange transformation $i_! \,\phi_X \to \phi_Y \,i_!$ is an equivalence, and for each $p: X\to Y$ in $P$, the exchange transformation $p_* \,\phi_X \to \phi_Y\,p_*$ is an equivalence.
\end{definition}

\begin{definition}[{\cite[Definition 3.1]{dauser2025uniquenesssixfunctorformalisms}}]
  Let
  \[
    \BCFun^{\lax}(\mathcal{C},E,I,P)\subseteq\BCFun(\mathcal{C}^{\op,\sqcup,\op},E_-,I_-,P_-)
  \]
  be the full subcategory spanned by functors $\mathcal{C}^{\op, \sqcup} \to \Cat_\infty$ that are lax cartesian structures\footnote{See \cite[Notation 2.2, Notation 2.3]{dauser2025uniquenesssixfunctorformalisms} for the relevant notations and details of the lax cartesian structure.}. Let
  \[
    \BCFun^{\lax,\mathrm{L}}(\mathcal{C},E,I,P)\subset\BCFun^{\lax}(\mathcal{C},E,I,P)
  \]
  be the full subcategory spanned by those lax‐cartesian structures $D:\mathcal{C}^{\op,\sqcup}\to\Cat_\infty$ satisfying:
  \begin{enumerate}
    \item
      For every $X\in \mathcal{C}$, the symmetric monoidal $\infty$‐category $D(X)$ is closed.
    \item
      For every $f: X\to Y$ in $\mathcal{C}$, the functor $f^*$ admits a right adjoint $f_*$.
    \item
      For every $p: X\to Y$ in $P$, the functor $p_*$ admits a right adjoint $p^\natural$.
  \end{enumerate}
\end{definition}

\begin{remark}
  The projection formula is implicitly encoded in the lax monoidal structure. Concretely, let $D\in \BCFun^{\lax}(\mathcal{C},E,I,P)$, and let $f: X \to Y$ be a morphism in $I$. Then we have a commutative square
  \[\begin{tikzcd}[sep=large]
    {D(Y) \times D(Y)} & {D(Y)} \\
    {D(X) \times D(Y)} & {D(X)}
    \arrow["\otimes", from=1-1, to=1-2]
    \arrow["{f^* \times \id_{D(Y)}}"', from=1-1, to=2-1]
    \arrow["{f^*}"', from=1-2, to=2-2]
    \arrow["{(-) \otimes f^*(-)}", from=2-1, to=2-2]
  \end{tikzcd}\]
  in which the vertical morphisms are induced by $f\times \id_Y$ and $f$, respectively, both lying in $I_-$. The condition that morphisms in $I$ satisfies the projection formula is equivalent to the requirement that the above diagram is vertically left-adjoinable. An analogous statement applies to the class of morphisms $P$.
\end{remark}

\begin{definition}[{Nagata six-functor formalism, \cite[Definition 2.15]{dauser2025uniquenesssixfunctorformalisms}}]
  Let $(\mathcal{C},E,I,P)$ be a Nagata set‐up. We define
  \[
    \SixFF(\mathcal{C},E,I,P) \subseteq \Fun(\Corr(C, E)^\otimes, \Cat_\infty)
  \]
  to be the full subcategory spanned by the six-functor formalisms that are Nagata, i.e., six-functor formalisms in which every morphism in $P$ is cohomologically proper \cite[Definition 6.10]{scholze2022sixfunctors}, and every morphism in $I$ is cohomologically étale \cite[Definition 6.12]{scholze2022sixfunctors}.
\end{definition}

\begin{theorem}[{\cite[Theorem 3.3]{dauser2025uniquenesssixfunctorformalisms}}]\label{thm:six-functor-as-bcfun}
  Let $(\mathcal{C},E,I,P)$ be a Nagata set‐up.  Restriction along
  \[
    \mathcal{C}^{\op,\sqcup}\to\Corr(\mathcal{C},E)^{\otimes}
  \]
  induces equivalences of $\infty$-categories
  \[
    \BCFun^{\lax,\mathrm{L}}(\mathcal{C},E,I,P)\simeq\SixFF(\mathcal{C},E,I,P).
  \]
\end{theorem}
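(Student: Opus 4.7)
The plan is to exhibit the restriction functor as an equivalence by constructing an explicit quasi-inverse via the Liu--Zheng--Mann machinery of \cref{thm:construction-six-functors}. First I would check that restriction is well-defined: for $D \in \SixFF(\mathcal{C}, E, I, P)$, the restriction $D|_{\mathcal{C}^{\op,\sqcup}}$ inherits a lax cartesian structure from the inclusion $\mathcal{C}^{\op,\sqcup} \hookrightarrow \Corr(\mathcal{C},E)^{\otimes}$; the symmetric monoidal structure on each $D(X)$ is closed because $D$ is a six-functor formalism; the pushforward $f_*$ exists as the right adjoint of $f^*$; and the further right adjoint $p^\natural$ to $p_*$ exists for $p \in P$ because the Nagata hypothesis identifies $p_*$ with $p_!$, which has right adjoint $p^!$. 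The proper base change, étale base change, and mixed Beck--Chevalley conditions of \cref{def:bcfun} then translate directly from the functoriality of $D$ on correspondences and the Nagata identifications of $p_!$ and $i^!$ with $p_*$ and $i^*$.

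For the quasi-inverse, given $D \in \BCFun^{\lax,\mathrm{L}}(\mathcal{C}, E, I, P)$, I would apply the Nagata factorization to each $f \in E$ as $f = p \circ i$ with $i \in I$ and $p \in P$, and define the missing shrieks by $f_! := p_* \circ i_\natural$. The mixed Beck--Chevalley equivalence $f_\natural\,g^* \simeq g^*\,\bar f_\natural$ ensures that this definition is independent of the chosen factorization: two factorizations are connected by a zigzag of Cartesian squares whose adjoinability provides the comparison. I would then invoke \cite[Proposition A.5.10]{mann2022padic} (or equivalently the original Liu--Zheng construction) to package this pointwise data coherently into a lax symmetric monoidal functor $\Corr(\mathcal{C},E)^\otimes \to \Cat_\infty$, using the three axioms of \cref{def:bcfun} together with the closed monoidal structure as the hypotheses of that theorem. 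The resulting functor is a six-functor formalism because all required adjoints exist (by assumption on $D$), and the projection formula is encoded in the lax cartesian structure. It is Nagata since the trivial factorization $p = p \circ \id$ yields $p_! = p_*$ for $p \in P$, and $i = \id \circ i$ yields $i_! = i_\natural$ with right adjoint $i^*$ for $i \in I$.

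The two composites are then natural equivalences: restricting the Liu--Zheng extension of $D \in \BCFun^{\lax,\mathrm{L}}$ returns $f^*$, $i_\natural$, and $p_*$ on the nose because these are the data used to build the extension; conversely, starting from $D \in \SixFF$, the reconstructed extension recovers the original $f_!$ because the Nagata property forces $f_! = p_! i_! = p_* i_\natural$ inside $D$ itself. Naturality on morphisms is inherited from the definition of $\BCFun^{\lax}$, whose morphisms by construction commute with $f^*$, $i_\natural$, and $p_*$, which are precisely the generators of the six-functor formalism under the Nagata factorization.

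The hardest part will be the $\infty$-categorical coherence underlying the Liu--Zheng--Mann extension. While the pointwise formula $f_! = p_* i_\natural$ is elementary, assembling it into a homotopy coherent lax symmetric monoidal functor on the correspondence category is a substantial piece of $(\infty,2)$-categorical bookkeeping, and the precise match between the Cartesian-square adjoinability conditions in \cref{def:bcfun} and the hypotheses in Mann's presentation requires careful verification. The mixed Beck--Chevalley condition, in particular, must be shown to be exactly what is needed to glue the $p_*$ and $i_\natural$ layers together, rather than merely a necessary consequence of the six-functor structure.
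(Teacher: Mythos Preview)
The paper does not actually prove this theorem: it is quoted verbatim from \cite[Theorem~3.3]{dauser2025uniquenesssixfunctorformalisms} and no argument is given beyond the citation. The only additional information the paper supplies is the remark immediately following the statement, namely that the inverse of the restriction equivalence is the Liu--Zheng--Mann construction of \cref{thm:construction-six-functors}.

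Your proposal is therefore not really comparable to ``the paper's proof,'' since there is none. That said, your outline is entirely in line with what the paper asserts in that remark: you build the quasi-inverse by setting $f_! = p_* i_\natural$ via a Nagata factorization and then invoke \cite[Proposition~A.5.10]{mann2022padic} to upgrade this to a lax symmetric monoidal functor on correspondences. This is exactly the Liu--Zheng--Mann extension the paper points to. Your diagnosis of the hard part---the $\infty$-categorical coherence of the extension and the precise role of the mixed Beck--Chevalley condition---is accurate; this is genuinely the content of the cited theorem in \cite{dauser2025uniquenesssixfunctorformalisms}, and it is not something one can fill in with a short argument. If you want to turn your sketch into an actual proof, you would need to reproduce or adapt the argument of \cite[Theorem~3.3]{dauser2025uniquenesssixfunctorformalisms} rather than just invoke Mann's Proposition A.5.10, since the latter produces an object of $\ThreeFF$ while the claimed equivalence is at the level of $\infty$-categories (including morphisms and higher coherences).
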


\begin{remark}
  The inverse of the equivalence $\SixFF(\mathcal{C},E,I,P) \xrightarrow{\simeq} \BCFun^{\lax,\mathrm{L}}(\mathcal{C},E,I,P)$ is given by the Liu–Zheng and Mann construction (\cref{thm:construction-six-functors}).
\end{remark}

With the abstract framework established, we now turn our attention to the specific setting of locally compact Hausdorff spaces relevant to our study.

\begin{convention}
  For the remainder of this section, we fix the source category to be the 1-category $\LCH$, and we take the classes of morphisms $E$, $I$, and $P$ to consist of all morphisms, open immersions, and proper maps, respectively.

  For simplicity, when the context is clear, we omit the classes $E, I, P$ and write $\BCFun^{\lax,\mathrm{L}}(\LCH)$ for $\BCFun^{\lax,\mathrm{L}}(\LCH,E,I,P)$ analogously write $\SixFF(\LCH)$ for $\SixFF(\LCH,E,I,P)$.
\end{convention}

\begin{definition}[Continuous six-functor formalism]\label{def:continuous-six-functor}
  A Nagata six-functor formalism $D\in \SixFF(\LCH)$ taking values in dualizable stable $\infty$-categories is called \emph{continuous} if it satisfies the following properties:

  \begin{enumerate}

    \item (\emph{Canonical descent/Localization})
      \begin{enumerate}
        \item
          $D(\emptyset) = 0$.
        \item
          For each closed immersion $i: Z\hookrightarrow X$ with open complement $j: U\hookrightarrow X$, and the square
          \[
            \begin{tikzcd}
              D(Z) \ar{r}{i_*} \ar{d} & D(X) \ar{d}{j^*}\\
              0 \ar{r}                & D(U)
            \end{tikzcd}
          \]
          is Cartesian in $\Cat_\infty^{\ex}$.
        \item
          For an increasing, filtered union $U = \bigcup_{i\in I} U_i$ of opens in $X$ we have that
          \[
             D(U) \xrightarrow{\simeq} \lim\nolimits^{\Cat_\infty^{\ex}} D(U_i)
          \]
          where the limit is taken along the functors $(f_{ij})^\ast: D(U_j) \to D(U_i)$.
      \end{enumerate}

    \item (\emph{Profinite descent})
      For a cofiltered limit of compact Hausdorff spaces $X = \lim_{i \in I} X_i$, there is an equivalence
      \[
        D(X) \xrightarrow{\simeq} \lim\nolimits^{\Cat_\infty^{\ex}} D(X_i)
      \]
      where the limit is taken along the functors $(f_{ij})_\ast: D(X_i) \to D(X_j)$.

    \item (\emph{Hyperdescent})
      For a hypercomplete locally compact Hausdorff spaces $X$, equivalences in $D(X)$ can be detected stalkwise, i.e., the functor
      \[
        \Big( \prod_{x: \pt \to X} x^* \Big): D(X) \to \prod_{x\in X} D(\pt)
      \]
      reflects equivalences.
  \end{enumerate}
  We denote by $\SixFF(\LCH)^{\cont} \subseteq \SixFF(\LCH)$ the subcategory consisting of continuous six-functor formalisms, where the morphisms are those natural transformations whose individual components preserve colimits.
\end{definition}

\begin{proposition}\label{prop:continuous-six-functor-as-subcategory-of-cosys}
  The category $\SixFF(\LCH)^{\cont}$ can be identified with a subcategory of $\CoSys^{\mathrm{c}}$ consisting of cocomplete coefficient systems $C$, valued in dualizable stable $\infty$-categories, that satisfy the following additional conditions:
  \begin{enumerate}
    \item
      Whenever $p:X\to Y$ is a \emph{proper map} in $\LCH$, the functor $p^*:C(Y)\to C(X)$ admits a right adjoint $p_*$, and:
      \begin{enumerate}
        \item (\emph{Base change})
          For each Cartesian square
          \[\begin{tikzcd}
            {X'} & {Y'} \\
            X & Y
            \arrow["{p'}", from=1-1, to=1-2]
            \arrow["{f'}"', from=1-1, to=2-1]
            \arrow["\lrcorner"{anchor=center, pos=0.125}, draw=none, from=1-1, to=2-2]
            \arrow["f", from=1-2, to=2-2]
            \arrow["p", from=2-1, to=2-2]
          \end{tikzcd}\]
          the canonical exchange transformation $p'_*\,(f')^* \to f^*\,p_*$ is an equivalence.
        \item (\emph{Projection formula})
          The canonical map
          \[
            p_*(p^*(-)\otimes -) \to - \otimes p_*(-)
          \]
          is an equivalence of functors $C(Y)\times C(X)\to C(Y)$.
      \end{enumerate}
    \item
      $C$ satisfies \emph{profinite descent} and \emph{hyperdescent} in the sense of \cref{def:continuous-six-functor}.
  \end{enumerate}
  Moreover, the morphism $\phi$ in this subcategory are required to be compatible with proper pushforwards, i.e., the exchange transformation
  \[
    p_* \,\phi_{X'} \to \phi_X \,p_*
  \]
  is an equivalence for every proper map $p: X \to Y$.
\end{proposition}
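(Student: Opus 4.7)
The plan is to translate between $\SixFF(\LCH)^{\cont}$ and the target subcategory of $\CoSys^{\mathrm{c}}$ via the equivalence $\BCFun^{\lax,\mathrm{L}}(\LCH) \simeq \SixFF(\LCH)$ of \cref{thm:six-functor-as-bcfun} in one direction, and via the Liu--Zheng--Mann construction of \cref{thm:construction-six-functors} in the other. Most axioms on the two sides match term-by-term; the only non-trivial verification is a mixed Beck--Chevalley identity needed as input to \cref{thm:construction-six-functors} that is not directly recorded among the coefficient-system axioms.

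For the forward direction, starting from $D \in \SixFF(\LCH)^{\cont}$, I would pass to $\BCFun^{\lax,\mathrm{L}}(\LCH)$ via \cref{thm:six-functor-as-bcfun} and extract the underlying functor $\LCH^{\op} \to \CAlg(\Cat_\infty^{\ex})$. The $\BCFun^{\lax,\mathrm{L}}$ axioms then supply: closed symmetric monoidal fibres with internal hom; pushforward adjunctions $f^* \dashv f_*$ for every continuous $f$; for open immersions, the left adjoint $i_!$ together with étale base change and the projection formula (encoded in the lax monoidal structure); and the analogous proper base change and projection formula for proper maps. Dualizability gives cocompleteness, so the resulting object lands in $\CoSys^{\mathrm{c}}$; the localization axiom is precisely canonical descent, and profinite descent and hyperdescent transport verbatim. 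This exhibits $D$ as an object of the subcategory of $\CoSys^{\mathrm{c}}$ described in the statement.

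For the reverse direction, given a cocomplete coefficient system $C$ valued in dualizable stable $\infty$-categories and satisfying conditions (1) and (2) of the statement, I would apply \cref{thm:construction-six-functors} to assemble a Nagata six-functor formalism and then transfer the continuity axioms. The input required by \cref{thm:construction-six-functors} not explicitly listed among the coefficient-system axioms is the mixed Beck--Chevalley compatibility: for every Cartesian square with $u:U\hookrightarrow X$ an open immersion and $p:Y\to X$ proper, with pullbacks $u':p^{-1}(U)\to Y$ open and $p':p^{-1}(U)\to U$ proper, the canonical exchange map $u_!\,p'_* \to p_*\,u'_!$ is an equivalence. \emph{This is the main obstacle.} My plan is to verify it using the open/closed decomposition: canonical descent yields the Verdier sequence of \cref{rem:verdier-sequence-from-localization}, so $(u^*,\iota^*)$ with $\iota: Z = X\setminus U \hookrightarrow X$ jointly reflects equivalences. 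On $U$: the identities $u^*u_! \simeq \id$ and $(u')^*u'_! \simeq \id$ (fully faithfulness of $u_!$, $u'_!$), together with proper base change $u^*p_* \simeq p'_*(u')^*$, reduce both sides of the exchange map to $p'_*$. On $Z$: base change for open immersions against the Cartesian square with $U\cap Z = \emptyset$ forces $\iota^*u_! = 0$ using $C(\emptyset) = 0$, while proper base change gives $\iota^*p_* \simeq (p|_{p^{-1}(Z)})_*\,(\iota')^*$ with $\iota'$ the pullback closed immersion, and the analogous argument forces $(\iota')^*u'_! = 0$. Thus both sides vanish on $Z$ and the exchange map is an equivalence. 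The continuity axioms on the constructed six-functor formalism are inherited from $C$ by construction, since the values $D(X) = C(X)$ are unchanged.

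For morphisms, a morphism in $\SixFF(\LCH)^{\cont}$ is a colimit-preserving natural transformation intertwining $f^*$ for every $f$, $f_!$ for every $f$, and $p_*$ for $p \in P$. The Liu--Zheng--Mann formula $f_! = p_*\,i_!$ applied to a Nagata factorization $f = p\circ i$ shows that compatibility with all $f_!$ is equivalent to the simultaneous compatibility with $i_!$ for $i \in I$ (the defining condition for morphisms in $\CoSys^{\mathrm{c}}$) and with $p_*$ for $p \in P$ (the additional morphism condition in the statement). Hence the morphism descriptions on both sides agree, completing the identification.
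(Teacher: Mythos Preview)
Your proposal is correct and follows essentially the same route as the paper: both reduce to the equivalence of \cref{thm:six-functor-as-bcfun}, identify the mixed Beck--Chevalley condition as the only non-formal input, and verify it via the conservativity of the open/closed restriction $(u^*,\iota^*)$ coming from localization. The paper compresses this last step into a one-line citation of \cite[Remark 4.2]{scholze2022sixfunctors} and \cite[Proposition A.2.19]{calmès2025hermitianktheorystableinftycategories}, whereas you spell out the computation on the open and closed parts explicitly; the content is the same.
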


\begin{proof}
  This follows from \cref{def:bcfun} and \cref{thm:six-functor-as-bcfun}. Note that for an open subset $U\subseteq X$, localization induces a conservative functor (see \cite[Proposition A.2.19]{calmès2025hermitianktheorystableinftycategories})
  \[
    C(X) \to C(U) \times C(X \setminus U).
  \]
  It follows that the mixed Beck–Chevalley condition required for the $\BCFun$ in \cref{def:bcfun} is satisfied, as noted in \cite[Remark 4.2]{scholze2022sixfunctors}.
\end{proof}

\begin{proposition}\label{prop:sheaf-is-continuous-six-functor}
  The assignment $X \mapsto \Shv(X; \Sp)$ defines a continuous six-functor formalism on locally compact Hausdorff spaces, i.e., $\Shv(-; \Sp)\in\SixFF(\LCH)^{\cont}$.
\end{proposition}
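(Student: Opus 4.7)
The plan is to verify each of the four defining properties in \cref{def:continuous-six-functor} by invoking results already collected in \cref{sec:category-of-sheaves}, since the assignment $X \mapsto \Shv(X; \Sp)$ has been constructed to satisfy them. Via the identification of \cref{prop:continuous-six-functor-as-subcategory-of-cosys}, it suffices to exhibit $\Shv(-;\Sp)$ as a cocomplete coefficient system valued in dualizable stable $\infty$-categories, equipped with proper pushforwards satisfying base change and the projection formula, and additionally satisfying profinite and hyperdescent.

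First I would recall that $\Shv(-;\Sp)$ is already known to be a Nagata six-functor formalism on $(\LCH,E,I,P)$: this is the theorem of Scholze–Volpe cited in the introduction, with the Liu–Zheng–Mann construction (\cref{thm:construction-six-functors}) producing $f_! = p_* i_!$ for a Nagata factorization $f = p\circ i$ with $i$ an open immersion and $p$ proper. In particular, open immersions are cohomologically étale (one has $i_! \dashv i^* \dashv i_*$ from \cref{cons:sheaf-of-sheaves}) and proper maps are cohomologically proper (one has $f^* \dashv f_* = f_!$ by the remark closing \cref{sec:category-of-sheaves}), so the formalism is Nagata. The base change and projection formula for proper pushforwards are part of the six-functor package produced by this construction. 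Dualizability of $\Shv(X;\Sp)$ is the result of Krause–Nikolaus cited after \cref{cons:sheaf-of-sheaves}.

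Next I would collect the three descent properties. Canonical descent is the content of \cref{prop:sheaf-of-sheaves} together with the localization (recollement) sequence noted in \cref{rem:recollement}: these give $\Shv(\emptyset;\Sp) = 0$, the Cartesian square for an open/closed decomposition, and the equivalence $\Shv(U;\Sp) \simeq \lim \Shv(U_i;\Sp)$ for a filtered union, all of which are exactly the three clauses in the canonical descent axiom. Profinite descent for cofiltered limits of compact Hausdorff spaces is \cref{prop:sheaf-profinite-descent}. Hyperdescent, i.e., the assertion that stalks are jointly conservative on a hypercomplete space, is \cref{lem:stalkwise-check}, applied to the dualizable (hence compactly assembled) category $\Sp$.

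Thus each of the conditions of \cref{def:continuous-six-functor} reduces to a previously established statement, completing the proof. There is no substantial new obstacle here; the only mild subtlety is ensuring that the formulation of the localization square in \cref{def:continuous-six-functor}(1)(b) matches the one extracted from the recollement, but this is immediate from \cref{rem:recollement} and the identification in \cref{prop:continuous-six-functor-as-subcategory-of-cosys}.
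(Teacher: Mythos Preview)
Your proposal is correct and follows essentially the same approach as the paper: reduce via \cref{prop:continuous-six-functor-as-subcategory-of-cosys} to checking that $\Shv(-;\Sp)$ is a cocomplete coefficient system (the paper cites \cref{prop:sheaf-is-cosys} for this, while you unpack it slightly more via \cref{prop:sheaf-of-sheaves} and \cref{rem:recollement}), then verify proper base change and projection formula (the paper cites Volpe directly), profinite descent (\cref{prop:sheaf-profinite-descent}), and hyperdescent (\cref{lem:stalkwise-check}). The only cosmetic difference is that the paper says hyperdescent ``holds by definition'' rather than invoking \cref{lem:stalkwise-check}, but your citation is arguably more precise.
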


\begin{proof}
  By \cref{prop:sheaf-is-cosys}, $\Shv(-;\Sp)\in\CoSys^{\mathrm{c}}$. Hence, by \cref{prop:continuous-six-functor-as-subcategory-of-cosys}, it suffices to show that the sheaf functor satisfies:
  \begin{enumerate}
    \item the base‐change and projection formulas for proper maps;
    \item profinite descent;
    \item hyperdescent.
  \end{enumerate}
  The base change and projection formula are established in \cite[Proposition 6.9, Proposition 6.12]{volpe2023operationstopology}, profinite descent is given by \cref{prop:sheaf-profinite-descent}, and hyperdescent holds by definition.
\end{proof}

\begin{lemma}\label{lem:compact-supp-cohomology-recollement}
  Let $D\in \SixFF(\LCH)^{\cont}$ be a continuous six-functor formulism, and $X$ be a locally compact Hausdorff space. For an open immersion $i: U \hookrightarrow X$ and the closed complement $j: V = X \setminus U \hookrightarrow X$, there is an exact sequence in $D(\pt)$
  \[
    \pi_{U,!} \pi_U^* 1_{D(\pt)} \to \pi_{X,!} \pi_X^* 1_{D(\pt)} \to \pi_{V,!} \pi_V^* 1_{D(\pt)}.
  \]
\end{lemma}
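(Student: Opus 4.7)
The plan is to extract the desired fiber sequence from the recollement supplied by the localization axiom, applied to the constant object $\pi_X^*1_{D(\pt)}$, and then push it forward along $\pi_{X,!}$.

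First, the canonical descent axiom of \cref{def:continuous-six-functor} yields that the square with vertices $D(V),\,D(X),\,0,\,D(U)$ (with horizontal maps $j_*$ and $i^*$) is Cartesian in $\Cat_\infty^{\ex}$. Since $i^*$ admits a left adjoint $i_!$ (open immersion) and $j_*$ admits a left adjoint $j^*$ (the latter because $j$ is a closed immersion, hence proper, so the Nagata package supplies an adjoint triple $j^*\dashv j_*\dashv j^!$ with $j_!=j_*$), this is a split Verdier sequence in the sense of \cref{rem:verdier-sequence-from-localization}. As in the standard recollement picture, the unit and counit assemble into a fiber sequence of endofunctors of $D(X)$:
\[
  i_!\,i^* \longrightarrow \id_{D(X)} \longrightarrow j_*\,j^*.
\]

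Next, I would evaluate this fiber sequence on $F := \pi_X^*1_{D(\pt)}$ and then apply the functor $\pi_{X,!}$, which is exact because it is a left adjoint between stable $\infty$-categories. This produces a fiber sequence in $D(\pt)$:
\[
  \pi_{X,!}\,i_!\,i^*\,\pi_X^*1_{D(\pt)} \longrightarrow \pi_{X,!}\,\pi_X^*1_{D(\pt)} \longrightarrow \pi_{X,!}\,j_*\,j^*\,\pi_X^*1_{D(\pt)}.
\]

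Finally, I would simplify the outer terms using the functoriality of $(-)^*$ and $(-)_!$: one has $i^*\pi_X^*\simeq \pi_U^*$ and $j^*\pi_X^*\simeq \pi_V^*$, while $\pi_{X,!}\,i_!\simeq \pi_{U,!}$ and, invoking $j_*=j_!$, $\pi_{X,!}\,j_*=\pi_{X,!}\,j_!\simeq \pi_{V,!}$. Substituting yields precisely the claimed fiber sequence. There is no serious obstacle; the only step worth flagging is the identification $j_!=j_*$ for the closed immersion $j$, which is guaranteed by the Nagata hypothesis that morphisms in $P$ are cohomologically proper in the six-functor formalism $D$.
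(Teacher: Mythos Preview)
Your proposal is correct and follows essentially the same route as the paper: both arguments invoke the recollement fiber sequence $i_!i^*\to\id_{D(X)}\to j_*j^*$ coming from the localization axiom, apply $\pi_{X,!}(-)\circ(-)\circ\pi_X^*$ to it, and then simplify via the composition identities and $j_*=j_!$ for the proper closed immersion $j$.
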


\begin{proof}
  Consider the recollement arising from the localization sequence (see \cref{rem:verdier-sequence-from-localization}), which yields the exact sequence
  \[
    i_! i^* \to \id_{D(X)} \to j_* j^*,
  \]
  (see \cite[Remark A.2.9]{calmès2025hermitianktheorystableinftycategories}). Applying the exact functors $\pi_{X,!}$ and $\pi_X^*$ to the above exact sequence and using $j_* = j_!$ since $j$ is proper, we obtain the desired exact sequence
  \[
    \pi_{U,!} \pi_U^* 1_{D(\pt)} \to \pi_{X,!} \pi_X^* 1_{D(\pt)} \to \pi_{V,!} \pi_V^* 1_{D(\pt)}.
  \]
\end{proof}

\begin{lemma}\label{lem:compact-supp-cohomology-cosheaf}
  In the same setting as in \cref{lem:compact-supp-cohomology-recollement}, the functor $\mathcal{F}: \Open(X) \to \Sp$
  \[
    \big( U \xrightarrow{\pi_U} \pt \big) \mapsto \pi_{U,!} \pi_U^* 1_{D(\pt)}.
  \]
  defines a cosheaf valued in $D(\pt)$ on $X$.
\end{lemma}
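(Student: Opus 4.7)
The plan is to verify directly the three cosheaf axioms dual to those in \cref{def:sheaf}; the covariant functoriality along an open inclusion $f\colon U\hookrightarrow V$ is induced by the counit $f_!f^*\to\id$ applied to $\pi_V^*1_{D(\pt)}$, together with the factorization $\pi_U=\pi_V\circ f$. For the empty-set axiom, canonical descent forces $D(\emptyset)=0$, so $\pi_\emptyset^*1_{D(\pt)}=0$, and exactness of $\pi_{\emptyset,!}$ yields $\mathcal{F}(\emptyset)=0$ immediately.

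For the Mayer--Vietoris pushout, given opens $U,V\subseteq X$ with $W:=U\cup V$, the crucial observation is that $W\setminus U=V\setminus(U\cap V)$; call this common closed subset $Z$. Two applications of \cref{lem:compact-supp-cohomology-recollement}, one to $U\hookrightarrow W$ and one to $U\cap V\hookrightarrow V$, produce cofiber sequences in $D(\pt)$ whose third term is in both cases $\pi_{Z,!}\pi_Z^*1_{D(\pt)}$. I would then assemble them into a map of cofiber sequences along the open inclusions $U\cap V\hookrightarrow U$ and $V\hookrightarrow W$; since these inclusions restrict to the identity on $Z$, naturality of the recollement upgrades the map on third terms to an equivalence, and stability of $D(\pt)$ converts this into a pushout on the left, i.e., $\mathcal{F}(W)\simeq \mathcal{F}(U)\sqcup_{\mathcal{F}(U\cap V)}\mathcal{F}(V)$.

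For filtered unions $U=\bigcup_i U_i$, canonical descent gives $D(U)\xrightarrow{\simeq}\lim_i D(U_i)$ along the pullbacks $(f_{ij})^*$. Since open immersions admit left adjoints and each $D(U_i)$ is dualizable, this cofiltered limit transposes to a filtered colimit $D(U)\simeq\colim_i^{\PrL} D(U_i)$ along extensions by zero, under which every $Y\in D(U)$ is tautologically the colimit $Y\simeq\colim_i(f_i)_!f_i^*Y$, where $f_i\colon U_i\hookrightarrow U$. Specializing to $Y=\pi_U^*1_{D(\pt)}$, using $f_i^*\pi_U^*=\pi_{U_i}^*$, and applying the colimit-preserving functor $\pi_{U,!}$, one obtains $\mathcal{F}(U)\simeq\colim_i\mathcal{F}(U_i)$.

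The main obstacle is the compatibility step in Mayer--Vietoris: to assert that the comparison between the two recollement cofiber sequences is genuinely the identity on the third term requires unpacking the naturality of the localization sequence $i_!i^*\to\id\to j_*j^*$ along the square of open inclusions $(U\cap V,V)\to(U,W)$. Once this bookkeeping is in place, the remaining two axioms reduce to standard manipulations with adjoints and colimits in $\PrL$.
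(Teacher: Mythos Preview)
Your proposal is correct and follows essentially the same route as the paper's proof: both verify the three cosheaf axioms directly, use the recollement lemma twice to produce a map of cofiber sequences with identical third term for Mayer--Vietoris, and pass the filtered-limit localization axiom to left adjoints for the filtered-union axiom. The only cosmetic difference is that the paper takes the common closed complement to be $U\setminus V$ (applying \cref{lem:compact-supp-cohomology-recollement} to $U\cap V\hookrightarrow U$ and $V\hookrightarrow U\cup V$) whereas you take it to be $V\setminus U$; the paper also does not pause to justify the naturality you flag as an obstacle, simply asserting the diagram commutes.
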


\begin{proof}
  We verify the cosheaf axioms:
  \begin{enumerate}
    \item
      By construction, the functor is reduced, i.e., it sends the empty space to a zero object.

    \item
      For any open subsets $U, V \subseteq X$, by \cref{lem:compact-supp-cohomology-recollement}, there is a commutative diagram in $D(\pt)$ with exact rows
      \[\begin{tikzcd}[column sep=small]
        {\pi_{U\cap V,!} \pi_{U\cap V}^* 1_{D(U\cap V)}} & {\pi_{U,!} \pi_{U}^* 1_{D(U)}} & {\pi_{U\setminus V,!} \pi_{U\setminus V}^* 1_{D(U\setminus V)}} \\
        {\pi_{V,!} \pi_{V}^* 1_{D(V)}} & {\pi_{U\cup V,!} \pi_{U\cup V}^* 1_{D(U\cup V)}} & {\pi_{U\setminus V,!} \pi_{U\setminus V}^* 1_{D(U\setminus V)}}
        \arrow[from=1-1, to=1-2]
        \arrow[from=1-1, to=2-1]
        \arrow[from=1-2, to=1-3]
        \arrow[from=1-2, to=2-2]
        \arrow["{=}", from=1-3, to=2-3]
        \arrow[from=2-1, to=2-2]
        \arrow[from=2-2, to=2-3]
      \end{tikzcd}\]
      It follows that the left square is a pushout in $D(\pt)$.

    \item
      For a filtered union $U = \bigcup_{i\in I} U_i$ of opens in $X$, we have
      \[
        \begin{aligned}
          \pi_{U,!} \pi_U^* 1_{D(\pt)} \simeq \pi_{U,!} 1_{D(U)} &\simeq \pi_{U,!} \colim \iota_{i, !} 1_{D(U_i)} \\
          & \simeq \colim \pi_{U,!}\iota_{i, !} 1_{D(U_i)} \\
          & \simeq \colim \pi_{U_i,!} 1_{D(U_i)} \\
          & \simeq \colim \pi_{U_i,!} \pi_{U_i}^* 1_{D(\pt)}. \\
        \end{aligned}
      \]
      Here, the second equivalence comes from the localization axiom\footnote{The colimit version of the localization axiom is obtained by passing to the left adjoint.}
      \[
         D(U) \xrightarrow{\simeq} \colim\nolimits^{\Cat_\infty^{\ex}} D(U_i)
      \]
      where the filtered colimit is taken along the extension functoriality. \qedhere
  \end{enumerate}
\end{proof}

\begin{lemma}\label{lem:compact-supp-cohomology-agree-sp}
  Let $D\in \SixFF(\LCH)^{\cont}$ be a continuous six-functor formulism. Suppose that $D(\pt)\simeq\Sp$. Then the functor
  \[
    \LCH \to \Sp: X \mapsto \pi_{X,!} \pi_X^* 1_{D(\pt)}
  \]
  is equivalent to the compactly supported sheaf cohomology with $\S \simeq 1_{\Sp}$ coefficients, where $\pi_X: X \to \pt$ denotes the unique map to a point which induces $\pi_X^*: D(\pt) \to D(X)$ and $\pi_{X,!}: D(X) \to D(\pt)$.
\end{lemma}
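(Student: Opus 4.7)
The plan is to exploit the universal morphism of coefficient systems, together with profinite descent, to build a pointwise equivalence. Since $\SixFF(\LCH)^{\cont}$ embeds into $\CoSys^{\mathrm{c}}$ by \cref{prop:continuous-six-functor-as-subcategory-of-cosys}, \cref{thm:sheaf-initial-cosys} produces a canonical morphism $[-]: \Shv(-;\Sp) \to D$. At the point, $[-]_{\pt}$ is a symmetric monoidal, colimit-preserving endofunctor of $\Sp$, hence canonically the identity under the given $D(\pt) \simeq \Sp$; this, together with naturality, already yields $[\pi^{\Shv,*}_X \S] \simeq \pi^*_X 1_{D(\pt)}$ for every $X\in\LCH$. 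The content of the lemma then becomes the assertion that $[-]_{\pt}$ carries $\pi^{\Shv}_{X,!}\pi^{\Shv,*}_X \S$ to $\pi_{X,!}\pi^*_X 1_{D(\pt)}$.

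I would first reduce to the compact case by choosing a compactification $X\hookrightarrow \bar X$ and factoring $\pi_X$ as $X \xrightarrow{\iota} \bar X \xrightarrow{\pi_{\bar X}} \pt$ with $\iota$ an open immersion and $\pi_{\bar X}$ proper, so that $\pi_{X,!} = \pi_{\bar X,*}\circ \iota_!$. Since $[-]$ is automatically compatible with $\iota_!$, $\iota^*$, and $\pi^*_{\bar X}$, the problem reduces to establishing compatibility with proper pushforward $\pi_{\bar X, *}$ for compact Hausdorff $\bar X$. On such $\bar X$, profinite descent (\cref{prop:sheaf-profinite-descent} together with the continuity hypothesis on $D$) exhibits both $\Shv(\bar X;\Sp)$ and $D(\bar X)$ as cofiltered limits along a presentation $\bar X \simeq \lim_i X_i$ through simpler compact Hausdorff spaces; dualizing, one obtains a filtered colimit expression for $\pi_{\bar X, *}\pi^*_{\bar X}(-)$ in terms of its values on the $X_i$, in the spirit of \cref{prop:section-profinite-descent}. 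At the base case of a finite discrete space $X_i$, iterated canonical descent gives $D(X_i) \simeq \Sp^{|X_i|}$ with $\pi_{X_i,!}\pi^*_{X_i} 1_{D(\pt)} \simeq \S^{|X_i|}$, matching $\Gamma_{\mathrm{c}}(X_i,\underline{\S})$ directly.

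The principal obstacle is the transfer step: one must verify that the filtered-colimit expressions for $\pi_{\bar X,*}\pi^*_{\bar X}(-)$ in the two formalisms are compatibly identified by $[-]$ upon passing from the simple $X_i$ to the general compact Hausdorff $\bar X$. This is where all three continuity axioms on $D$ come into play — profinite descent to set up the cofiltered presentation, canonical descent to handle the base cases, and hyperdescent to ensure that the local identifications assemble coherently. The cosheaf structure from \cref{lem:compact-supp-cohomology-cosheaf} then packages these compact comparisons across the open immersion $\iota$ into the stated equivalence on all of $\LCH$.
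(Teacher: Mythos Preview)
Your reduction has a genuine gap at the ``base case.'' A general compact Hausdorff space $\bar X$ is \emph{not} a cofiltered limit of finite discrete spaces; that would force $\bar X$ to be profinite (totally disconnected). Profinite descent in the sense of \cref{def:continuous-six-functor} applies to any cofiltered limit of compact Hausdorff spaces, but it does not by itself provide a presentation through spaces on which the comparison is trivially true. So the step ``at the base case of a finite discrete space $X_i$, iterated canonical descent gives $D(X_i)\simeq\Sp^{|X_i|}$'' is never reached for, say, $\bar X=[0,1]$.

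The paper supplies exactly the missing reduction chain, and this is where the hyperdescent axiom does its actual work (your invocation of it ``to ensure that local identifications assemble coherently'' is too vague). First embed the compact $\bar X$ as a closed subspace of a Hilbert cube $[0,1]^I$; since the comparison on a space restricts to closed subspaces, it suffices to treat $[0,1]^I$. Now profinite descent (together with \cref{prop:section-profinite-descent} on the sheaf side) reduces to the finite cubes $[0,1]^n$. These are paracompact of finite covering dimension, hence hypercomplete, and the hyperdescent axiom lets you check the comparison on stalks, i.e.\ at the point---where it is tautological. Your framing via the universal morphism $[-]$ is a legitimate reorganization of the paper's Verdier-duality argument, but either route must pass through this embed--descend--stalk sequence; substituting finite discrete spaces for $[0,1]^n$ breaks the argument.
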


\begin{proof}
  Let $X$ be a locally compact Hausdorff space. By \cref{lem:compact-supp-cohomology-cosheaf}, one obtains a spectral cosheaf $\mathcal{F}: \Open(X) \to \Sp$ on $X$, defined by
  \[
    \big( U \xrightarrow{\pi_U} \pt \big) \mapsto \pi_{U,!} \pi_U^* 1_{D(\pt)}.
  \]
  Applying Verdier duality (\cref{thm:verdier-duality}),
  \[
    \D: \CoShv(X; \Sp) \simeq \Shv(X; \Sp),
  \]
  we obtain a sheaf $\D(\mathcal{F})$. For any compact subset $K\subseteq X$, Verdier duality gives
  \[
    \D(\mathcal{F})(K) = \mathcal{F}(X)/\mathcal{F}(X \setminus K).
  \]
  where the value of a sheaf on a subset denotes the global sections of its restriction to that subset. To describe $\D(\mathcal{F})(K)$, consider the exact sequence in $\Sp \simeq D(\pt)$ given by \cref{lem:compact-supp-cohomology-recollement}
  \[
    \pi_{X\setminus K,!} \pi_{X\setminus K}^* 1_{D(\pt)} \to \pi_{X,!} \pi_X^* 1_{D(\pt)} \to \pi_{K,!} \pi_K^* 1_{D(\pt)}.
  \]
  It follows that
  \[
    \D(\mathcal{F})(K) = \mathcal{F}(X)/\mathcal{F}(X \setminus K) \simeq \pi_{K,!} \pi_K^* 1_{D(\pt)}.
  \]
  Since $K$ is compact, $\pi_K$ is proper and $\pi_{K,!} = \pi_{K,*}$, the unit map
  \[
    \S \simeq 1_{D(\pt)} \to \pi_{K,*} \pi_K^* 1_{D(\pt)} \simeq \pi_{K,!} \pi_K^* 1_{D(\pt)} = \D(\mathcal{F})(K)
  \]
  adjoins over to a morphism $\underline{\S} \to \D(\mathcal{F})|_{K}$. As $X$ is locally compact, these local morphisms glue into a morphism of sheaves $\underline{\S} \to \D(\mathcal{F})$.

  To complete the proof, we must show that the morphism $\underline{\S} \to \D(\mathcal{F})$ is an equivalence. Indeed, under Verdier duality, a sheaf $\mathcal{G}$ corresponds to the cosheaf $\D^{-1}(\mathcal{G})$ whose value on $U$ is the compactly supported sections of $\mathcal{G}$ on $U$, which then gives the desired description of $\mathcal{F}$.

  It remains to verify that $\underline{\S} \to \D(\mathcal{F})$ is an equivalence. As sheaves on locally compact spaces are determined by their values on compact subsets, and constant sheaves are stable under pullback, it suffices to check the equivalence when $X$ is compact. Moreover, if the equivalence holds on a compact space, it holds on each of its closed subspaces. Now every compact Hausdorff space embeds into a Hilbert cube $[0,1]^I$ for some set $I$, via the canonical double dual map $X \to [0,1]^I$, where $I = C^0(X, [0,1])$. This map is injective by Urysohn’s Lemma and defines an embedding. Thus, it suffices to prove the claim for $[0,1]^I$.

  The Hilbert cube is an inverse limit of finite-dimensional cubes $[0,1]^n$. By \cref{prop:section-profinite-descent} together with the profinite descent property of the continuous six-functor formalism, it is enough to verify the claim for each $[0,1]^n$. These spaces are hypercomplete, and by the hyperdescent property, the equivalence of sheaves can be checked on stalks. Since both $\underline{\S}$ and $\D(\mathcal{F})$ have stalks equivalent to $\S$, the morphism is indeed an equivalence.
\end{proof}

\begin{lemma}\label{lem:compact-supp-cohomology-agree}
  Let $D\in \SixFF(\LCH)^{\cont}$ be a continuous six-functor formalism. Then the functor
  \[
    \LCH \to \Sp: X \mapsto \pi_{X,!} \pi_X^* 1_{D(\pt)}
  \]
  is equivalent to the compactly supported sections functor $X \mapsto \Gamma_{\mathrm{c}}(X, \underline{1_{D(\pt)}})$ where $\pi_X: X \to \pt$ denotes the unique map to a point and sections are taken in the sheaf $\Shv(-; D(\pt))$.
\end{lemma}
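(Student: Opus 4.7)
The plan is to run the argument of \cref{lem:compact-supp-cohomology-agree-sp} almost verbatim, with $\mathcal{C} := D(\pt)$ and $1_{D(\pt)}$ playing the roles of $\Sp$ and $\S$. That earlier proof uses the identification $D(\pt)\simeq\Sp$ only to land in spectra and to invoke the specifically spectrum-valued profinite descent of \cref{prop:section-profinite-descent}; every other ingredient — the cosheaf assembled by \cref{lem:compact-supp-cohomology-cosheaf}, Verdier duality \cref{thm:verdier-duality} (which requires only that the target admit small limits and colimits), the localization sequence \cref{lem:compact-supp-cohomology-recollement}, the Urysohn embedding of a compact Hausdorff space into a Hilbert cube, and the profinite-descent and hyperdescent built into \cref{def:continuous-six-functor} — applies equally well in an arbitrary dualizable stable $\infty$-category.

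Concretely, I would first package $U \mapsto \pi_{U,!}\pi_U^* 1_{D(\pt)}$ into a $D(\pt)$-valued cosheaf $\mathcal{F}$ via \cref{lem:compact-supp-cohomology-cosheaf} and dualise it to a sheaf $\D(\mathcal{F}) \in \Shv(X;D(\pt))$. The localization sequence yields, for compact $K\subseteq X$,
\[
  \D(\mathcal{F})(K) \simeq \pi_{K,!}\pi_K^*1_{D(\pt)} \simeq \pi_{K,*}\pi_K^*1_{D(\pt)},
\]
the second equivalence because $\pi_K$ is proper. The unit $1_{D(\pt)}\to\pi_{K,*}\pi_K^*1_{D(\pt)}$ then adjoints to a local morphism $\underline{1_{D(\pt)}}|_K\to \D(\mathcal{F})|_K$, and these glue to a natural comparison $\underline{1_{D(\pt)}}\to\D(\mathcal{F})$ of sheaves on $X$. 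Once this comparison is an equivalence, transporting back through Verdier duality $\D^{-1}(\underline{c})(U)=\Gamma_{\mathrm{c}}(U,\underline{c}|_U)$ gives exactly the statement of the lemma.

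I would verify the comparison is an equivalence by following the same reductions as in the spectrum case: since constant sheaves are pullback-stable and sheaves on locally compact spaces are determined by their values on compact subsets, it suffices to treat compact $X$; Urysohn's lemma then embeds $X\hookrightarrow[0,1]^I$ for $I = C^0(X,[0,1])$, reducing to the Hilbert cube. Writing $[0,1]^I=\lim[0,1]^n$ as a cofiltered limit of hypercomplete finite cubes, profinite descent reduces to a check on each $[0,1]^n$, where hyperdescent (\cref{lem:stalkwise-check}) lets us verify the comparison stalkwise; the stalks of both sides are manifestly $1_{D(\pt)}$.

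The main technical obstacle is the profinite-descent step, since \cref{prop:section-profinite-descent} is stated only for spectrum-valued sections whereas we need the analogous statement for $\Gamma(-,\underline{1_{D(\pt)}})$. I expect this to follow by $\Sp$-linearity: under the factorisation $\Shv(X;D(\pt))\simeq\Shv(X;\Sp)\otimes D(\pt)$ the constant sheaf $\underline{1_{D(\pt)}}$ corresponds to $\underline{\S}\otimes 1_{D(\pt)}$, and global sections on compact spaces commute with tensoring against a fixed object of $D(\pt)$, so the spectrum-valued descent transports across. Alternatively, one can appeal directly to \cref{prop:sheaf-profinite-descent} for $\Shv(-;D(\pt))$ and track the constant sheaves $\underline{1_{D(\pt)}}$ compatibly along the inverse system $[0,1]^I=\lim[0,1]^n$; in parallel the left-hand side inherits its profinite descent directly from the hypothesis on $D$.
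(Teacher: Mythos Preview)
Your proposal is correct and follows essentially the same approach as the paper. The paper's own proof is a single sentence: it says the argument follows identically to that of \cref{lem:compact-supp-cohomology-agree-sp}, with the stalkwise check carried out via \cref{lem:stalkwise-check}, which is precisely what you do. Your treatment is in fact more careful than the paper's in that you explicitly flag the profinite-descent step for $\Gamma(-,\underline{1_{D(\pt)}})$ (since \cref{prop:section-profinite-descent} is stated only for spectra) and sketch how to handle it via the factorisation $\Shv(X;D(\pt))\simeq\Shv(X;\Sp)\otimes D(\pt)$; the paper simply asserts the argument is identical without isolating this point.
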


\begin{proof}
  The argument follows identically to that of \cref{lem:compact-supp-cohomology-agree-sp} with the stalkwise check following directly from \cref{lem:stalkwise-check}.
\end{proof}

\begin{theorem}\label{thm:sheaf-initial-six-functor}
  The object $\Shv(-; \Sp) \in \SixFF(\LCH)^{\cont}$ is initial.
\end{theorem}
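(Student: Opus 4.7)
The plan is to leverage the initiality of $\Shv(-;\Sp)$ in $\CoSys^{\mathrm{c}}$ (\cref{thm:sheaf-initial-cosys}) and promote the resulting morphism to one in $\SixFF(\LCH)^{\cont}$. By \cref{prop:continuous-six-functor-as-subcategory-of-cosys}, $\SixFF(\LCH)^{\cont}$ is a (non-full) subcategory of $\CoSys^{\mathrm{c}}$ whose morphisms, in addition to the standard compatibilities, are required to commute with proper pushforwards. Since $\Shv(-;\Sp)$ also lies in $\SixFF(\LCH)^{\cont}$ by \cref{prop:sheaf-is-continuous-six-functor}, the initial property in $\CoSys^{\mathrm{c}}$ supplies, for any $D \in \SixFF(\LCH)^{\cont}$, an essentially unique morphism $\phi = [-]: \Shv(-;\Sp) \to D$ in $\CoSys^{\mathrm{c}}$. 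Once we verify that $\phi$ commutes with proper pushforwards, the mapping space $\Map_{\SixFF(\LCH)^{\cont}}(\Shv(-;\Sp), D)$ is a nonempty sub-$\infty$-groupoid of the contractible space $\Map_{\CoSys^{\mathrm{c}}}(\Shv(-;\Sp), D)$, hence itself contractible.

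The main task is therefore to prove that the canonical mate
\[
  \alpha_p: \phi_Y \circ p_* \longrightarrow p_* \circ \phi_X,
\]
obtained from the equivalence $\phi_X \circ p^* \simeq p^* \circ \phi_Y$ via the units and counits of $p^* \dashv p_*$ in both formalisms, is an equivalence for every proper $p: X \to Y$. I would reduce to the case $Y = \pt$ by chaining the three descent axioms of \cref{def:continuous-six-functor}: proper base change (valid on both sides) identifies $y^*\alpha_p$ with $\alpha_{p_y}$ for any $y: \pt \to Y$ and its proper fiber $p_y: X_y \to \pt$, so pointwise verification forces $\alpha_p$ on any hypercomplete $Y$ by \cref{lem:stalkwise-check}; profinite descent then extends this to arbitrary compact Hausdorff $Y$ (embedded into a Hilbert cube and realized as a cofiltered limit of the hypercomplete spaces $[0,1]^n$, following the argument of \cref{lem:compact-supp-cohomology-agree-sp}); and canonical descent, via the Verdier sequence of \cref{rem:verdier-sequence-from-localization}, extends the conclusion to arbitrary locally compact Hausdorff $Y$.

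For the remaining base case $Y = \pt$ with $X$ compact, both $\phi_X$ and $p_* = p_!$ preserve colimits, so it suffices to evaluate $\alpha_p$ on the generators $j_!\underline{\S}$ of $\Shv(X;\Sp)$ indexed by open immersions $j: U \hookrightarrow X$. Compatibility of $\phi$ with $j_!$ and $j^*$, together with \cref{lem:compact-supp-cohomology-agree-sp}, yields
\[
  \phi_{\mathrm{pt}}\bigl(\pi_{X,*} j_!\underline{\S}\bigr) \simeq \phi_{\mathrm{pt}}\,\Gamma_{\mathrm{c}}(U,\underline{\S}), \qquad \pi_{X,*}\,\phi_X\bigl(j_!\underline{\S}\bigr) \simeq \pi_{U,!}\pi_U^* 1_{D(\pt)} \simeq \Gamma_{\mathrm{c}}\bigl(U, \underline{1_{D(\pt)}}\bigr).
\]
Under the identification $\Shv(-;D(\pt)) \simeq \Shv(-;\Sp) \otimes D(\pt)$, the right-hand side is the image of $\Gamma_{\mathrm{c}}(U,\underline{\S})$ under $\phi_{\mathrm{pt}}$, so $\alpha_p$ is an equivalence on each generator and hence everywhere.

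The principal difficulty lies in the descent-based reduction to a point: one must simultaneously orchestrate canonical, profinite, and hyperdescent and verify that $\alpha_p$ interacts correctly with each. The decisive input is \cref{lem:compact-supp-cohomology-agree-sp}, which has already performed precisely this tri-descent argument to identify $\pi_{X,!}\pi_X^* 1_{D(\pt)}$ with compactly supported sheaf cohomology; once this identification is in hand, the generator-level comparison collapses to the manifest naturality of $\Gamma_{\mathrm{c}}(U,-)$ along the canonical symmetric monoidal map $\S \to 1_{D(\pt)}$ induced by $\phi_{\mathrm{pt}}$.
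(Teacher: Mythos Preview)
Your proposal is correct and follows essentially the same approach as the paper: reduce via \cref{prop:continuous-six-functor-as-subcategory-of-cosys} to showing that the unique $\CoSys^{\mathrm{c}}$-morphism commutes with proper pushforwards, use proper base change together with the three descent axioms to reduce to $Y=\pt$, and verify on the generators $j_!\underline{\S}$ via the compactly-supported-cohomology identification. Two minor points: the identification $\pi_{U,!}\pi_U^*1_{D(\pt)}\simeq\Gamma_{\mathrm{c}}(U,\underline{1_{D(\pt)}})$ for general $D(\pt)$ is \cref{lem:compact-supp-cohomology-agree} rather than its $\Sp$-coefficient special case \cref{lem:compact-supp-cohomology-agree-sp}, and the paper invokes \cref{cor:factor-initial-cosys-map} to make precise the factorization through $\Shv(-;D(\pt))$ that underlies your ``manifest naturality of $\Gamma_{\mathrm{c}}(U,-)$'' step.
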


\begin{proof}
  By \cref{prop:sheaf-is-continuous-six-functor} we have $\Shv(-; \Sp) \in \SixFF(\LCH)^{\cont}$. Note that $\Shv(-;\Sp)\in\CoSys^{\mathrm{c}}$ is initial by \cref{thm:sheaf-initial-cosys}. Therefore, by \cref{prop:continuous-six-functor-as-subcategory-of-cosys}, it suffices to show that for every $D\in \SixFF(\LCH)^{\cont}$ the unique morphism $[-]: \Shv(-; \Sp) \to D$ is compatible with proper pushforwards, i.e., the following diagram commutes
  \[\begin{tikzcd}[column sep=3em]
    {\Shv(X; \Sp)} & {D(X)} \\
    {\Shv(Y; \Sp)} & {D(Y)}
    \arrow["{[-]}"{description}, from=1-1, to=1-2]
    \arrow["{p_*}", from=1-1, to=2-1]
    \arrow["{p_*}", from=1-2, to=2-2]
    \arrow["{[-]}"{description}, from=2-1, to=2-2]
  \end{tikzcd}\]
  for every proper map $p: X \to Y$. Since $\Shv(X; \Sp)$ is generated by representable spectral sheaves under filtered colimits and both $[-]$ and $p_*$ (as the left adjoint to $p^!$ by properness) preserves colimits, it suffices to verify the commutativity of the above diagram on these generators. Note that the spectral sheaf represented by an open immersion $i: U \to X$ can be expressed as
  \[
    \Sigma^\infty_+ \Map(-, U) \simeq i_! \pi_U^* 1_{\Sp}
  \]
  where $\pi_U: U \to \pt$ denotes the unique map to a point.

  Using the localization property, it suffices to verify the desired equivalence on each compact subset of $Y$, allowing us to reduce to the case where $Y$ is compact. As in the proof of \cref{lem:compact-supp-cohomology-agree-sp}, we can embed $Y$ into a larger space, e.g., embedding into the Hilbert cube $[0,1]^{C^0(X, [0, 1])}$ via the canonical double dual map. Therefore, using the profinite descent property, we may also assume that $Y$ is hypercomplete. Then for any $y: \pt \to Y$, let $X_y = X \times_Y \{ y \}$ and $U_y = U \times_Y \{ y \}$. Consider the following commutative diagram
  \[\begin{tikzcd}[row sep=1.5em, column sep=1em]
    {\Shv(U_y; \Sp)} && {\Shv(U; \Sp)} \\
    & {D(U_y)} && {D(U)} \\
    {\Shv(X_y; \Sp)} && {\Shv(X; \Sp)} \\
    & {D(X_y)} && {D(X)} \\
    {\Shv(\pt; \Sp)} && {\Shv(Y; \Sp)} \\
    & {D(\pt)} && {D(Y)}
    \arrow[from=1-1, to=2-2]
    \arrow[from=1-1, to=3-1]
    \arrow[from=1-3, to=1-1]
    \arrow[from=1-3, to=2-4]
    \arrow["{i_!}"{pos=0.66}, from=1-3, to=3-3]
    \arrow["{i_!}"{pos=0.66}, from=2-4, to=4-4]
    \arrow[from=3-1, to=4-2]
    \arrow[from=3-1, to=5-1]
    \arrow[from=3-3, to=3-1]
    \arrow[from=3-3, to=4-4]
    \arrow["{p_*}"{pos=0.66}, from=3-3, to=5-3]
    \arrow["{p_*}"{pos=0.66}, from=4-4, to=6-4]
    \arrow[from=5-1, to=6-2]
    \arrow["{y^*}"{pos=0.33}, from=5-3, to=5-1]
    \arrow[from=5-3, to=6-4]
    \arrow["{y^*}"{pos=0.33}, from=6-4, to=6-2]
    \arrow[crossing over, from=2-2, to=4-2]
    \arrow[crossing over, from=2-4, to=2-2]
    \arrow[crossing over, from=4-2, to=6-2]
    \arrow[crossing over, from=4-4, to=4-2]
  \end{tikzcd}\]
  by the open base change and proper base change, it suffices to verify the commutativity of the left face. Hence we may reduce further to the case $Y = \pt$.

  In this setting, using the fact that $p$ is proper, so that $p_* = p_!$, and that the morphism $[-]$ preserves units and is compatible with both $i_!$ and $\pi_U^*$, it remains to show the equivalence
  \[
    [\pi_{U,!} \pi_U^* 1_{\Sp}] \simeq \pi_{U,!} \pi_U^* 1_{D(\pt)}.
  \]
  This follows immediately from \cref{cor:factor-initial-cosys-map} and \cref{lem:compact-supp-cohomology-agree}.
\end{proof}

\begin{proposition}\label{prop:prop:free-forgetful-six-functor-adjunction}
  Let $U: \SixFF(\LCH)^{\cont} \to \CAlg(\Cat^{\dual,\mathrm{L}}_{\infty})$ denote the forgetful functor given by evaluation at the point. Then $U$ is right adjoint to the functor
  \[
    F: \CAlg(\Cat^{\dual,\mathrm{L}}_{\infty}) \to \SixFF(\LCH)^{\cont},\quad \mathcal{C} \mapsto \Shv(X; \mathcal{C}).
  \]
  In other words, the assignment $X \mapsto \Shv(X; \mathcal{C})$ exhibits the free continuous six-functor formalism generated by the symmetric monoidal dualizable stable $\infty$-category $\mathcal{C}$.
\end{proposition}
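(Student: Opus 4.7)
The plan is to extend the free--forgetful adjunction of \cref{prop:free-forgetful-cosys-adjunction} from $\CoSys^{\mathrm{c}}$ to $\SixFF(\LCH)^{\cont}$, using \cref{thm:sheaf-initial-six-functor} to supply the universal map on the underlying coefficient system and then parameterising it by a coefficient algebra, exactly as in \cref{cor:free-cosys-over-base}. First we must check that $F$ is well-defined, i.e.\ that $\Shv(-;\mathcal{C}) \in \SixFF(\LCH)^{\cont}$ whenever $\mathcal{C}\in\CAlg(\Cat^{\dual,\mathrm{L}}_\infty)$. This is immediate from the identification $\Shv(X;\mathcal{C}) \simeq \Shv(X;\Sp)\otimes \mathcal{C}$ in $\PrL$ (valid since $\mathcal{C}$ is dualizable), because the base change, projection formula, canonical descent, profinite descent, and hyperdescent verified in \cref{prop:sheaf-is-continuous-six-functor} for $\Shv(-;\Sp)$ all transfer across tensoring with a fixed dualizable presentable stable algebra.

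Since $\Shv(\pt;\mathcal{C})\simeq \mathcal{C}$, the unit $\mathcal{C} \to UF(\mathcal{C})$ is naturally the identity. For the counit, fix $D\in\SixFF(\LCH)^{\cont}$ and set $\mathcal{C} = D(\pt)$. We define $\epsilon_X$ as the composite
\[
  \Shv(X;\mathcal{C}) \simeq \Shv(X;\Sp)\otimes \mathcal{C}
  \xrightarrow{[-]\otimes \pi_X^*} D(X)\otimes D(X) \xrightarrow{\otimes} D(X),
\]
where $[-]$ is the unique morphism of \cref{thm:sheaf-initial-six-functor} and $\pi_X\colon X\to\pt$. This is formally identical to the counit formula of \cref{prop:free-forgetful-cosys-adjunction}, so componentwise colimit preservation is immediate and compatibility with open extensions $i_!$ follows from the same cube diagram used there; naturality and the triangle identities are then routine.

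The main new ingredient, and the expected central obstacle, is that $\epsilon$ is also compatible with proper pushforwards, i.e.\ that for every proper $p\colon X\to Y$ the Beck--Chevalley transformation $p_*\,\epsilon_X \to \epsilon_Y\,p_*$ is an equivalence. After unwinding the tensor product, the essential instance-level computation is, for $c\in\mathcal{C}$ and $\mathcal{F}\in\Shv(X;\Sp)$,
\[
  p_*\bigl(\pi_X^* c \otimes [\mathcal{F}]\bigr)
  \simeq p_*\bigl(p^* \pi_Y^* c \otimes [\mathcal{F}]\bigr)
  \simeq \pi_Y^* c \otimes p_*[\mathcal{F}]
  \simeq \pi_Y^* c \otimes [p_* \mathcal{F}],
\]
using $\pi_X = \pi_Y\circ p$, the projection formula for the proper map $p$ in $D$, and the compatibility of $[-]$ with proper pushforwards already established in \cref{thm:sheaf-initial-six-functor}. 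The hard part will be packaging these identifications into a coherent equivalence of functors, via a cube analogous to the one in \cref{prop:free-forgetful-cosys-adjunction} but with a face encoding the projection formula in place of a base-change face; from this the adjunction $F\dashv U$ follows.
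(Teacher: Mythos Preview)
Your proposal is correct and follows essentially the same approach as the paper. The paper's own proof is extremely terse—it simply invokes \cref{prop:free-forgetful-cosys-adjunction} and says ``the same reasoning applies once one replaces the $i_!$ for an open immersion $i$ by the $p_*$ for a proper morphism $p$''—whereas you spell out what this replacement entails: the cube diagram now has a face witnessing the projection formula for proper maps, and the compatibility of $[-]$ with $p_*$ (supplied by \cref{thm:sheaf-initial-six-functor}) replaces the compatibility with $i_!$ used previously. Your instance-level computation is exactly the content of that modified cube, so the two arguments coincide.
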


\begin{proof}
  Let $D\in \SixFF(\LCH)^{\cont}$. By \cref{prop:free-forgetful-cosys-adjunction}, it remains only to verify that the counit
  \[
    \epsilon: \Shv(-; D(\pt)) \to D
  \]
  is compatible with proper pushforwards. But the same reasoning applies as in the proof of \cref{prop:free-forgetful-cosys-adjunction}, once one replaces the $i_!$ for an open immersion $i$ by the $p_*$ for a proper morphism $p$.
\end{proof}

\begin{corollary}\label{cor:factor-initial-six-functor-map}
  Let $D\in \SixFF(\LCH)^{\cont}$ and denote $\mathcal{D} = D(\pt)$. The unique morphism $[-]: \Shv(-; \Sp) \to D$ factors through the counit $\epsilon: \Shv(-; \mathcal{D}) \to D$.
  \[\begin{tikzcd}[sep=large]
    {\Shv(-; \Sp) } & D \\
    {\Shv(-; \mathcal{D}) }
    \arrow["{[-]}"{description}, from=1-1, to=1-2]
    \arrow["{[-]}"{description}, from=1-1, to=2-1]
    \arrow["\epsilon"', from=2-1, to=1-2]
  \end{tikzcd}\]
\end{corollary}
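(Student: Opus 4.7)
The plan is to derive this factorization formally from the initiality of $\Shv(-;\Sp)$ in $\SixFF(\LCH)^{\cont}$ established in \cref{thm:sheaf-initial-six-functor}, together with the existence of the counit $\epsilon$ supplied by \cref{prop:prop:free-forgetful-six-functor-adjunction}. The content is essentially an immediate invocation of the universal property of an initial object.

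First, I would apply \cref{thm:sheaf-initial-six-functor} to the target $\Shv(-;\mathcal{D})\in\SixFF(\LCH)^{\cont}$, obtaining the unique morphism $[-]_{\Shv(-;\mathcal{D})}\colon\Shv(-;\Sp)\to\Shv(-;\mathcal{D})$. Post-composing with the counit $\epsilon\colon\Shv(-;\mathcal{D})\to D$ from \cref{prop:prop:free-forgetful-six-functor-adjunction} yields a morphism $\epsilon\circ[-]_{\Shv(-;\mathcal{D})}\colon\Shv(-;\Sp)\to D$ inside $\SixFF(\LCH)^{\cont}$.

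Second, since $\Shv(-;\Sp)$ is initial in $\SixFF(\LCH)^{\cont}$, the mapping space $\Map_{\SixFF(\LCH)^{\cont}}(\Shv(-;\Sp),D)$ is contractible. Hence the composite $\epsilon\circ[-]_{\Shv(-;\mathcal{D})}$ must agree canonically with the unique morphism $[-]_D\colon\Shv(-;\Sp)\to D$, which is precisely the claimed triangle. Equivalently, one can phrase this via the triangle identity for the adjunction $F\dashv U$ of \cref{prop:prop:free-forgetful-six-functor-adjunction}: the morphism $[-]_D$ corresponds under the adjunction to the unique symmetric monoidal functor $\Sp\to\mathcal{D}=U(D)$, which factors through the unit (an identity on $\mathcal{D}$), and applying $F$ followed by $\epsilon$ reproduces the factorization.

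There is no substantive obstacle in this step; all the real work has already been done in proving the initiality of $\Shv(-;\Sp)$ and in establishing the free-forgetful adjunction. The proof is pure universal-property bookkeeping.
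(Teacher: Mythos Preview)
Your proposal is correct and matches the paper's intent: the corollary is stated without proof precisely because it is an immediate consequence of the initiality of $\Shv(-;\Sp)$ (\cref{thm:sheaf-initial-six-functor}) together with the existence of the counit from the free--forgetful adjunction (\cref{prop:prop:free-forgetful-six-functor-adjunction}), exactly as you describe.
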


\begin{corollary}\label{cor:free-six-functor-over-base}
  The object $\Shv(-; \mathcal{B})$ is initial in the category $\SixFF(\LCH)^{\cont}_{\mathcal{B}}$, where $\SixFF(\LCH)^{\cont}_{\mathcal{B}}$ denotes the full subcategory of $\SixFF(\LCH)^{\cont}$ consisting of continuous six-functor formalisms valued in algebras over the base $\mathcal{B} \in \CAlg(\Cat^{\dual, \mathrm{L}}_{\infty})$.
\end{corollary}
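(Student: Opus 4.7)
The plan is to deduce this corollary formally from the free–forgetful adjunction of \cref{prop:prop:free-forgetful-six-functor-adjunction}, in direct parallel with how \cref{cor:free-cosys-over-base} follows from \cref{prop:free-forgetful-cosys-adjunction}. The conceptual point is that $\SixFF(\LCH)^{\cont}_{\mathcal{B}}$ ought to be identified with the coslice $\Shv(-;\mathcal{B})/\SixFF(\LCH)^{\cont}$, after which the claim reduces to the observation that the identity morphism is tautologically initial in any coslice category.

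First, I would unpack the definition: an object of $\SixFF(\LCH)^{\cont}_{\mathcal{B}}$ is a continuous six-functor formalism $D$ whose values lie in $\CAlg(\Cat^{\dual,\mathrm{L}}_\infty)_{\mathcal{B}/}$, which amounts to specifying $D\in \SixFF(\LCH)^{\cont}$ together with an algebra morphism $\mathcal{B}\to D(\pt)$ (from which the $\mathcal{B}$-algebra structure on each $D(X)$ is obtained by transport along $\pi_X^*$ and is automatically compatible with $f^*$, $f_!$, and proper $f_*$ by the monoidal functoriality built into $\SixFF$). Morphisms in $\SixFF(\LCH)^{\cont}_{\mathcal{B}}$ are those morphisms of six-functor formalisms whose component at $\pt$ is a map of $\mathcal{B}$-algebras.

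Next, I would invoke \cref{prop:prop:free-forgetful-six-functor-adjunction}: the adjunction $F\dashv U$ with $F(\mathcal{C})=\Shv(-;\mathcal{C})$ and $U(D)=D(\pt)$ yields the natural equivalence
\[
  \Map_{\CAlg(\Cat^{\dual,\mathrm{L}}_\infty)}(\mathcal{B},D(\pt)) \;\simeq\; \Map_{\SixFF(\LCH)^{\cont}}(\Shv(-;\mathcal{B}),D).
\]
Passing to slice/coslice categories, this upgrades to an equivalence
\[
  \SixFF(\LCH)^{\cont}_{\mathcal{B}} \;\simeq\; \Shv(-;\mathcal{B})/\SixFF(\LCH)^{\cont},
\]
where on the right we have the coslice under $\Shv(-;\mathcal{B})$ (whose value at $\pt$ is $\mathcal{B}$ itself, equipped with its tautological algebra structure). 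Since every coslice category $c/\mathcal{C}$ has the identity arrow $\id_c$ as initial object, it follows immediately that $\Shv(-;\mathcal{B})$ is initial in $\SixFF(\LCH)^{\cont}_{\mathcal{B}}$.

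The only subtle point is the identification of morphism spaces: I need morphisms in $\SixFF(\LCH)^{\cont}_{\mathcal{B}}$ (which are $\mathcal{B}$-linear morphisms of six-functor formalisms) to correspond precisely to commutative triangles under $\Shv(-;\mathcal{B})$. This is where naturality of the adjunction is used: a morphism $\phi:D\to D'$ in $\SixFF(\LCH)^{\cont}$ is $\mathcal{B}$-linear exactly when $\phi_\pt:D(\pt)\to D'(\pt)$ commutes with the structure maps from $\mathcal{B}$, and by the adjunction this is exactly the condition that the two adjunct maps $\Shv(-;\mathcal{B})\to D\to D'$ and $\Shv(-;\mathcal{B})\to D'$ agree. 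Once this bookkeeping is verified—likely the only real work in the proof—the corollary follows with no further input.
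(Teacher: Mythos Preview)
Your proposal is correct and matches the paper's intended argument: the paper states this corollary without proof, as a formal consequence of the free--forgetful adjunction in \cref{prop:prop:free-forgetful-six-functor-adjunction}, exactly paralleling \cref{cor:free-cosys-over-base}. Your identification of $\SixFF(\LCH)^{\cont}_{\mathcal{B}}$ with the coslice under $\Shv(-;\mathcal{B})$ via the adjunction is the standard argument the paper has in mind.
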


We conclude this section by stating a series of consequences of the universal property of sheaves for (co)homology theories derived from the continuous six-functor formalism.

\begin{proposition}\label{prop:cohomology-equivalence}
  Let $D\in \SixFF(\LCH)^{\cont}$ and denote $\mathcal{D} = D(\pt)$. Consider the counit
  \[
    \epsilon: \Shv(-; \mathcal{D}) \to D
  \]
  of the adjunction $F: \CAlg(\Cat^{\dual,\mathrm{L}}_{\infty}) \;\substack{\rightarrow \\[-0.5ex] \leftarrow}\; \SixFF(\LCH)^{\cont}: U$. Then $\epsilon$ is a cohomological equivalence: for any object $E \in \mathcal{D}$, the associated (co)homology functors
  \begin{itemize}
    \item (cohomology functor)
      \[
        \LCH^{\op} \to \mathcal{D}: X\mapsto \pi_{X,*} \pi_X^* E
      \]
    \item (compactly supported cohomology functor)
      \[
        \LCH \to \mathcal{D}: X\mapsto \pi_{X,!} \pi_X^* E;
      \]
    \item (homology functor)
      \[
        \LCH \to \mathcal{D}: X\mapsto \pi_{X,!} \pi_X^! E;
      \]
    \item (locally finite homology functor)
      \[
        \LCH \to \mathcal{D}: X\mapsto \pi_{X,*} \pi_X^! E.
      \]
  \end{itemize}
  are equivalent to the same construction carried out on the sheaf $\Shv(-; \mathcal{D})$. In particular, the (co)homology theories that are intrinsically defined in any continuous six-functor formulism coincide with the standard sheaf (co)homology theories.
\end{proposition}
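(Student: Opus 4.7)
The two key structural inputs are that $\epsilon_{\pt} = \id_{\mathcal{D}}$, which comes from the triangle identities for the adjunction $F\dashv U$ of \cref{prop:prop:free-forgetful-six-functor-adjunction}, and that $\epsilon$, being a morphism in $\SixFF(\LCH)^{\cont}$, is compatible with $f^*$ for every $f$ and with $f_!$ for every $f$ (via Nagata factorization into an open immersion followed by a proper map, together with the $\BCFun$-compatibilities with $i_!$ for open $i$ and $p_*$ for proper $p$).

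For compactly supported cohomology the comparison is immediate:
\[
  \pi_{X,!}^D \pi_X^{*,D} E \;\simeq\; \pi_{X,!}^D\,\epsilon_X\bigl(\pi_X^{*,\Shv} E\bigr) \;\simeq\; \epsilon_{\pt}\bigl(\pi_{X,!}^{\Shv} \pi_X^{*,\Shv} E\bigr) \;=\; \pi_{X,!}^{\Shv} \pi_X^{*,\Shv} E.
\]
Writing $\rho_X$ for the right adjoint of $\epsilon_X$ and taking right adjoints of the above compatibility equivalences while using $\rho_{\pt}=\id$, one obtains the mate equivalences $\pi_{X,*}^D \simeq \pi_{X,*}^{\Shv}\,\rho_X$ and $\pi_X^{!,\Shv} \simeq \rho_X\,\pi_X^{!,D}$. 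Composing these two directly settles the locally finite homology case with no further work:
\[
  \pi_{X,*}^D\,\pi_X^{!,D} E \;\simeq\; \pi_{X,*}^{\Shv}\,\rho_X\,\pi_X^{!,D} E \;\simeq\; \pi_{X,*}^{\Shv}\,\pi_X^{!,\Shv} E.
\]

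For the remaining cohomology and homology cases, the mate calculus produces only comparison maps: the unit of $\epsilon_X \dashv \rho_X$ gives $\pi_{X,*}^{\Shv}\pi_X^{*,\Shv} E \to \pi_{X,*}^D\pi_X^{*,D} E$, and dually the counit gives $\pi_{X,!}^{\Shv}\pi_X^{!,\Shv} E \to \pi_{X,!}^D\pi_X^{!,D} E$. I plan to show these maps are equivalences by a local-to-global argument modelled on the proof of \cref{lem:compact-supp-cohomology-agree-sp}. Both sides define sheaves (resp.\ cosheaves) on $\LCH$ in the open-cover topology, since $\pi_{X,*}$ preserves the Mayer-Vietoris limits coming from canonical descent of both formalisms (and dually for $\pi_{X,!}$); this localizes the verification. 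For compact $X$, the map $\pi_X$ is proper and $\pi_{X,*} = \pi_{X,!}$, which collapses cohomology to the already-settled compactly supported case and homology to the locally finite homology case just established. For general locally compact $X$, embedding each compact neighborhood into a Hilbert cube and reducing to finite-dimensional cubes via profinite descent (\cref{prop:sheaf-profinite-descent,prop:section-profinite-descent}) places us in the hypercomplete range, where the comparison is checkable stalkwise via \cref{lem:stalkwise-check}.

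The hardest part will be the last step --- the passage from compact to general locally compact $X$ for the cohomology and homology comparisons --- since these theories do not enjoy the cosheaf-type descent that makes the compactly supported case so clean. The open-cover sheaf property of both sides, combined with the reduction to compact Hilbert-cube pieces followed by profinite descent and stalkwise checking, is the technical engine intended to overcome this obstacle.
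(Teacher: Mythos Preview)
The paper's proof is much terser than yours: it draws the single square with $\pi_X^*$ going up and $\pi_{X,*}$ going down on each side, asserts it commutes because $\epsilon$ is a morphism of continuous six-functor formalisms, and declares that ``similar reasoning applies to the remaining three functors.'' It does not separate easy cases from hard ones at all.

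Your mate-calculus treatment of the compactly supported cohomology and locally finite homology cases is correct and makes precise what the paper leaves implicit: since $\epsilon$ intertwines $f^*$ and $f_!$ and $\epsilon_{\pt}=\id$, those two composites go through formally. Your observation that the remaining two cases need more is also well-founded --- morphisms in $\SixFF(\LCH)^{\cont}$ are by definition only required to commute with $f^*$, with $i_!$ for open $i$, and with $p_*$ for proper $p$, so compatibility of $\epsilon$ with $\pi_{X,*}$ for non-compact $X$ (or with $\pi_X^!$ on the other side) is not part of the data. The paper's one-line argument does not address this point; either it is tacitly claiming something stronger or it is incomplete.

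Your local-to-global plan for the two remaining cases is the right idea but the execution is muddled. Viewing both sides as sheaves on $\LCH$ in the open-cover topology does not by itself reduce you to compact spaces, since open covers of $X$ consist of opens, not compacts. What does work is to fix $X$, regard $U\mapsto \pi_{U,*}^D\pi_U^{*,D}E$ as a $\mathcal{D}$-valued sheaf on $\Open(X)$ (this is where canonical descent of $D$ is used), and compare it to the constant sheaf $\underline{E}$ on $X$ via the exact template of \cref{lem:compact-supp-cohomology-agree-sp}: identify values on compacts $K$ using $\pi_{K,*}=\pi_{K,!}$ (which collapses to the already-settled case), then run the Hilbert-cube embedding, profinite descent, and stalkwise check. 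Framed this way your outline goes through; the homology case is dual.
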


\Cref{lem:compact-supp-cohomology-agree} establishes the case of compactly supported cohomology with the unit coefficient, which plays a key role in the proof of the initiality of the sheaf (\cref{thm:sheaf-initial-six-functor}). Building on this, \cref{prop:cohomology-equivalence} extends the result of \cref{lem:compact-supp-cohomology-agree} to the remaining three (co)homology functors.

\begin{proof}
  The statement for the cohomology functor follows from the following commutative diagram
  \[\begin{tikzcd}[sep=large]
    {\Shv(X; \mathcal{D})} & {D(X)} \\
    {\Shv(\pt; \mathcal{D})} & {\mathcal{D}}
    \arrow[from=1-1, to=1-2]
    \arrow["{\pi_{X,*}}"', shift right, from=1-1, to=2-1]
    \arrow["{\pi_{X,*}}"', shift right, from=1-2, to=2-2]
    \arrow["{\pi^*_X}"', shift right, from=2-1, to=1-1]
    \arrow["\simeq"', from=2-1, to=2-2]
    \arrow["{\pi^*_X}"', shift right, from=2-2, to=1-2]
  \end{tikzcd}\]
  as the canonical morphism $\Shv(-; \mathcal{D}) \to D$ is a morphism of continuous six-functor formalisms. Similar reasoning applies to the remaining three functors.
\end{proof}

In the framework of six-functor formalisms, Poincaré duality can be interpreted as the statement that for a \emph{smooth} morphism $f: X \to Y$, the right adjoint $f^!$ of $f_!$ exists and agrees with $f^*$ up to a twist. This perspective motivates the following abstract definition.

\begin{definition}[{\cite[Definition 5.1]{scholze2022sixfunctors}}]
  Let $D\in \SixFF(\LCH)^{\cont}$ and $f: X\to Y$ be a morphism in $\LCH$. We say that $f$ is ($D$-)\emph{cohomologically smooth} if all of the following conditions hold:
  \begin{enumerate}
    \item
      The functor $f_!$ admits a right adjoint $f^!$, and the natural transformation
      \[
        f^!(1_Y) \otimes f^*(-) \to f^!(-)
      \]
      of functors $D(Y) \to D(X)$ is an equivalence.
    \item
      The object $f^!(1_Y)\in D(Y)$ is $\otimes$-invertible.
    \item
      The properties (1) and (2) are stable under base change, i.e., for every map $g: Y'\to Y$, form the Cartesian square
      \[\begin{tikzcd}
        {X'} & X \\
        {Y'} & Y
        \arrow["{g'}", from=1-1, to=1-2]
        \arrow["{f'}", from=1-1, to=2-1]
        \arrow["\lrcorner"{anchor=center, pos=0.125}, draw=none, from=1-1, to=2-2]
        \arrow["f", from=1-2, to=2-2]
        \arrow["g", from=2-1, to=2-2]
      \end{tikzcd}\]
      then:
      \begin{itemize}
        \item
          $f$ satisfies the properties (1) and (2) as well;
        \item
          the canonical map
          \[
            g'^* f^! (1_Y) \to f'^! (1_{Y'})
          \]
          is an equivalence.
      \end{itemize}
  \end{enumerate}
  Here the natural transformation
  \[
    f^!(1_Y) \otimes f^*(-) \to f^!(-)
  \]
  is adjoint to
  \[
    f_!(f^!(1_Y) \otimes f^*(-)) \simeq f_!f^!(1_Y) \otimes (-) \to (-)
  \]
  using the projection formula and the counit map $f_! f^! (1_Y) \to 1_Y$, and $g'^* f^! (1_Y) \to f'^! (1_{Y'})$ is adjoint to
  \[
    f'_! g'^* f^! (1_Y) \simeq g^* f_! f^! (1_Y) \to g^*(1_Y) = 1_{Y'}
  \]
  using the base change and the same counit map.
\end{definition}

\begin{convention}
  When $D = \Shv(-; \Sp)$, We omit the explicit notation of $\Shv(-;\Sp)$ and simply write cohomologically smooth. This convention is justified by \cref{prop:universal-cohomologically-smooth} below.
\end{convention}

If $\pi_X: X \to \pt$ is cohomologically smooth and $X$ is compact, let $E\in\Sp$ and write $\omega_X = \pi_X^!(\S)$ for the dualizing complex of $X$, one obtain
\[
  \pi_{X,!} \pi_X^! E \simeq \pi_{X,*}(\pi_X^!(\S) \otimes \pi_X^*(E)) \simeq \pi_{X,*}(\omega_X \otimes \pi_X^*(E)),
\]
which can be interpreted as the isomorphism $H_*(A; E) \simeq H^*(X; E\otimes \omega_X)$. In particular, when $X$ is an orientable compact $n$-manifold and one takes the coefficient $\Z$, the dualizing complex $\omega_X$ is simply a shift by $n$, recovering the classical statement of Poincaré duality.

\begin{proposition}\label{prop:R1-cohomologically-smooth}
  The map $\pi_{\R}: \R \to \pt$ is cohomologically smooth and the dualizing complex of $\R$ is a shift by $1$, i.e., $\omega_{\R} = \pi_X^!(\S) \simeq \S[1]$.
\end{proposition}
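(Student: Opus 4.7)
The plan is to first establish cohomological smoothness of $\pi_\R$ in the universal model $\Shv(-;\Sp)$, then transfer to an arbitrary continuous six-functor formalism via the canonical morphism $[-]:\Shv(-;\Sp)\to D$ from \cref{thm:sheaf-initial-six-functor}. In the universal model the task splits into (a) identifying $\omega_\R := \pi_\R^!(\S)$ with the invertible constant sheaf $\underline{\S[1]}$, and (b) verifying the projection formula and base-change stability.

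For (a), I would introduce $i:\{0\}\hookrightarrow\R$ and $j:\R\setminus\{0\}\hookrightarrow\R$, and apply $\pi_{\R,!}$ to the localization fibre sequence $j_! j^*\omega_\R\to \omega_\R\to i_* i^*\omega_\R$ from \cref{rem:recollement}. Using that $j$ is open (so $j^!=j^*$ and thus $j^*\omega_\R=\omega_{\R\setminus\{0\}}$), that $i_*=i_!$, and the standard sheaf-homology computations $\pi_{\R,!}\omega_\R\simeq\S$ together with $\pi_{\R\setminus\{0\},!}\omega_{\R\setminus\{0\}}\simeq\S\oplus\S$ (via $\R\setminus\{0\}\cong\R\sqcup\R$), one obtains a fibre sequence $\S\oplus\S\xrightarrow{+}\S\to i^*\omega_\R$ whose first map is the canonical sum. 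This is split surjective, so $i^*\omega_\R\simeq\S[1]$, and by translation invariance every stalk of $\omega_\R$ is $\S[1]$. Since $\R$ is paracompact of finite covering dimension, hence hypercomplete, \cref{lem:stalkwise-check} then upgrades this to an equivalence $\underline{\S[1]}\simeq \omega_\R$; the comparison map is the counit $\underline{\pi_{\R,*}\omega_\R}\to \omega_\R$ using $\pi_{\R,*}\omega_\R = H^{\mathrm{lf}}_*(\R;\S)\simeq\S[1]$, which is computed from $H^*_c(\R;\S)\simeq\S[-1]$ by Verdier duality (\cref{thm:verdier-duality}).

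For (b), condition (2) is immediate because $\underline{\S[1]}$ is $\otimes$-invertible. For the projection formula (1), hypercompleteness reduces $\omega_\R\otimes \pi_\R^*(-)\to \pi_\R^!(-)$ to a stalkwise check, and rerunning the recollement computation with $E\in\Sp$ in place of $\S$ yields stalk $E[1]$ on both sides. For base-change stability (3), given $g:Y\to\pt$ and the Cartesian square of $\pi':\R\times Y\to Y$ above $\pi_\R$, I would run the same localization relative to $Y$ for the closed immersion $\{0\}\times Y\hookrightarrow \R\times Y$ and invoke the proper and étale Beck--Chevalley axioms of \cref{def:bcfun} to identify $g'^*\omega_\R\simeq \omega_{\R\times Y/Y}$. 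Transfer to a general $D\in\SixFF(\LCH)^{\cont}$ is then direct: $[-]$ preserves $\pi_!$, $\pi^*$, and the unit, so the whole stalk computation transports to $D$. The main obstacle will be base-change stability, where the absolute stalkwise argument must be upgraded to a fibrewise statement over an arbitrary $Y\in\LCH$, requiring a careful simultaneous use of étale base change for the pullback of $j$ and proper base change for the closed fibre $\{0\}\times Y$.
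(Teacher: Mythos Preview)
The paper does not give an argument here at all: its proof is a bare citation to \cite[Proposition~5.10, Proposition~7.9]{scholze2022sixfunctors} and \cite[Proposition~4.6.15, Corollary~4.6.20]{krause2024sheaves}. So you are taking a genuinely different route by attempting a direct proof.

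Two remarks on scope and strategy. First, the statement concerns only $D=\Shv(-;\Sp)$; the convention immediately preceding the proposition fixes this, and the transfer to an arbitrary $D\in\SixFF(\LCH)^{\cont}$ is the content of \cref{prop:universal-cohomologically-smooth}, not of this proposition. Your final paragraph is therefore unnecessary here. Second, your recollement computation of the stalk $i^*\omega_\R$ is fine in outline, but it rests on the inputs $\pi_{\R,!}\pi_\R^!\S\simeq\S$ and $\pi_{\R\setminus\{0\},!}\pi_{\R\setminus\{0\}}^!\S\simeq\S\oplus\S$, which you label ``standard''. These are essentially equivalent to homotopy invariance of sheaf homology, which in this paper is \cref{thm:homotopy-invariant-six-functor} and is \emph{deduced from} the present proposition; you would need to supply an independent computation (e.g.\ via the explicit formula $\Gamma(U,\pi_\R^!\S)\simeq\Map(\Gamma_c(U,\underline{\S}),\S)$ on bounded intervals) to avoid circularity.

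The genuine gap is condition~(3), base-change stability, which you yourself flag as the main obstacle. Your absolute argument uses hypercompleteness of $\R$ to pass to stalks, but over an arbitrary base $Y$ the space $\R\times Y$ need not be hypercomplete, so stalk-checking is unavailable and your proposed ``rerun relative to $Y$'' does not close. The proofs in the cited references proceed differently: they compactify $\R\hookrightarrow[-1,1]$ and compute $\bar\pi^!$ for the projection $[-1,1]\times Y\to Y$ directly from the recollement attached to the two boundary points, obtaining $\pi'^!\simeq\pi'^*[1]$ uniformly over every base without any stalk argument. This simultaneously yields (1)--(3) and is what the paper is importing by citation.
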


\begin{proof}
  This statement is established for the derived category of abelian sheaves in \cite[Proposition 5.10, Proposition 7.9]{scholze2022sixfunctors}. For the case of spectral sheaves, see \cite[Proposition 4.6.15, Corollary 4.6.20]{krause2024sheaves}.
\end{proof}

\begin{proposition}\label{prop:universal-cohomologically-smooth}
  Let $D\in \SixFF(\LCH)^{\cont}$ and $f: X\to Y$ be a morphism in $\LCH$. If $f$ is cohomologically smooth, then $f$ is ($D$-)cohomologically smooth.
\end{proposition}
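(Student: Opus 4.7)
The plan is to transfer the cohomological smoothness data along the canonical morphism of continuous six-functor formalisms $[-]: \Shv(-;\Sp) \to D$ provided by \cref{thm:sheaf-initial-six-functor}. By initiality, this morphism is compatible with $f^*$, $f_!$ for all $f$, and with $p_*$ for proper $p$; each component $[-]_X$ is a symmetric monoidal left adjoint.

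First I would observe that $f^!$ exists as a right adjoint to $f_!$ in $D$. Since $D$ takes values in dualizable presentable stable $\infty$-categories and $p_*$ admits a further right adjoint for proper $p$ (built into $\BCFun^{\lax,\mathrm{L}}$), both $i_!$ and $p_*$ are colimit-preserving; hence so is $f_! = p_* i_!$, and $f_!$ admits a right adjoint $f^!$ in $D$ by the adjoint functor theorem. Set $\omega_f := f^!(1_{\Shv(Y;\Sp)}) \in \Shv(X;\Sp)$ and $\omega_f^D := [\omega_f] \in D(X)$. Since $[-]$ is symmetric monoidal and $\omega_f$ is $\otimes$-invertible, $\omega_f^D$ is $\otimes$-invertible in $D(X)$, verifying condition (2).

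Next I would construct the natural transformation $\alpha: \omega_f^D \otimes f^*(-) \to f^!(-)$ in $D$ and show it is an equivalence. The sheaf-theoretic counit $f_! \omega_f \to 1_{\Shv(Y;\Sp)}$ transfers, since $[-]$ preserves $f_!$ and units, to a map $f_! \omega_f^D \to 1_{D(Y)}$. Composing with the projection formula in $D$ gives $f_!(\omega_f^D \otimes f^*(-)) \simeq f_! \omega_f^D \otimes (-) \to (-)$, whose adjoint under $f_! \dashv f^!$ is the desired $\alpha$. To prove $\alpha$ is an equivalence, I would use the factorization of $[-]$ through $\epsilon: \Shv(-; \mathcal{D}) \to D$ with $\mathcal{D} = D(\pt)$ (\cref{cor:factor-initial-six-functor-map}): cohomological smoothness of $f$ in $\Shv(-;\Sp)$ tensors up to $\Shv(-;\mathcal{D}) \simeq \Shv(-;\Sp) \otimes \mathcal{D}$ with dualizing object $\omega_f \otimes 1_{\mathcal{D}}$, where the analogous transformation is tautologically an equivalence; then I would check that $\epsilon$, as a symmetric monoidal morphism of six-functor formalisms, transports this equivalence to $D$. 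Evaluating $\alpha$ at $1_Y$ then yields $f^!(1_Y) \simeq \omega_f^D$, establishing condition (1).

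Finally, base change stability (condition (3)) follows because the canonical exchange map $g'^* f^!(1_Y) \to f'^!(1_{Y'})$ in $D$ is built entirely from base change isomorphisms, projection formulas, and the counit of $f_! \dashv f^!$; all of these are preserved by $[-]$, and the corresponding exchange in the sheaf formalism is an equivalence by hypothesis. The main obstacle I anticipate is the previous step---verifying that the transferred duality datum implements a genuine adjunction in $D$, equivalently that $\alpha$ is an equivalence. Since $\epsilon$ is generally not fully faithful, this requires more than a formal transport; the cleanest route I foresee is a local-to-global argument exploiting canonical descent, hyperdescent, and profinite descent of continuous six-functor formalisms to reduce the equivalence of $\alpha$ to a stalkwise check at points of $X$, where the sheaf case combined with the preservation of $f^*$ and $\otimes$ by $[-]$ applies directly.
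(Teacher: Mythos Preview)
Your approach differs from the paper's, and the gap you flag is real and not resolved by the fix you propose. The paper does not attempt to verify directly that $\alpha$ is an equivalence; instead it invokes the $(\infty,2)$-categorical characterisation of cohomological smoothness from \cite[Theorem~5.5]{scholze2022sixfunctors}: smoothness of $f$ is encoded as an adjointability (dualizability) datum internal to the symmetric monoidal $(\infty,2)$-category $\mathrm{LZ}_D$ attached to the six-functor formalism. The morphism $[-]:\Shv(-;\Sp)\to D$ of six-functor formalisms induces a $2$-functor $\mathrm{LZ}_{\Shv(-;\Sp)}\to\mathrm{LZ}_D$, and such functors preserve adjunctions, so the smoothness witness transfers wholesale. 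No descent or stalkwise argument enters, and the conclusion holds for arbitrary morphisms of six-functor formalisms (cf.\ the remark following the proposition).

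Your proposed local-to-global route does not go through as stated. The reduction techniques used elsewhere in the paper apply to expressions built from $f^*$, $f_!$, $p_*$ and $\otimes$, all of which $[-]$ intertwines; but the target of $\alpha$ is the abstract right adjoint $f^!$, for which no base-change formula against point inclusions $x^*$ is available \emph{before} $D$-smoothness is known. You therefore cannot compute the stalk $x^* f^!(M)$ for general $M\in D(Y)$, and the stalkwise check has no traction. The correct repair is categorical rather than analytic: instead of transferring only the counit $f_!\omega_f\to 1_Y$ and then trying to identify $f^!$, transfer the \emph{entire} duality datum---both the evaluation and the coevaluation, together with the triangle identities---which is expressed purely in terms of $f^*$, $f_!$, $\Delta_!$, $\otimes$, and the projection and base-change isomorphisms, all preserved by $[-]$. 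Once both halves are transported, uniqueness of adjoints forces $\omega_f^D\otimes f^*\simeq f^!$ in $D$ without ever evaluating on objects outside the image of $[-]$. This is exactly what the $\mathrm{LZ}$ formalism packages, and is why the paper's proof is a single sentence.
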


\begin{proof}
  The unique morphism $[-]: \Shv(-; \mathcal{C}) \to D$ of continuous six-functor formalisms induces a functor of $(\infty, 2)$-category
  \[
    \mathrm{LZ}_{\Shv(-;\Sp)} \to \mathrm{LZ}_{D}
  \]
  which carries the commutativity datum of \cite[Theorem 5.5]{scholze2022sixfunctors} that witness the cohomological smoothness encoded in $\mathrm{LZ}_{\Shv(-;\Sp)}$ to $\mathrm{LZ}_{D}$. Then the result follows directly from \cite[Theorem 5.5]{scholze2022sixfunctors}.
\end{proof}

\begin{remark}
  By the reasoning above, morphisms of six-functor formalisms transfer cohomological smoothness in general. In particular, the analogue of \cref{prop:universal-cohomologically-smooth} also holds for $\mathrm{SH}$ and for the category $\SixFF^{DG}$ in which it is initial \cite[Corollary 3.6]{dauser2025uniquenesssixfunctorformalisms}.
\end{remark}

As a consequence of the preceding propositions, continuous six-functor formalisms are homotopy invariant. This corresponds to the $\mathbb{A}^1$-homotopy invariance of the coefficient system in motivic homotopy theory (see \cite[Definition 7.5]{Drew_2022}). In contrast, in the topological setting, homotopy invariance arises from the universal property of sheaves, rather than being imposed as part of the definition.

\begin{theorem}\label{thm:homotopy-invariant-six-functor}
  Let $D\in \SixFF(\LCH)^{\cont}$. Then $D$ is homotopy invariant, i.e., for every locally compact Hausdorff space $X$, the projection $\pi: X\times\R \to X$ induces a fully faithful functor
  \[
    \pi^*: D(X) \to D(X\times\R).
  \]
\end{theorem}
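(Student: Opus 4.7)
The plan is to exploit the cohomological smoothness of $\pi_\R: \R \to \pt$ in order to identify a left adjoint to $\pi^*$ explicitly, and then verify that the counit of that adjunction is an equivalence via the projection formula together with a cohomological computation on $\R$. By \cref{prop:R1-cohomologically-smooth}, $\pi_\R$ is cohomologically smooth in $\Shv(-;\Sp)$ with $\omega_\R \simeq \S[1]$; by \cref{prop:universal-cohomologically-smooth} this property transfers to $D$, so $\omega_\R^D \simeq 1_{D(\pt)}[1]$. Since cohomological smoothness is stable under base change (condition~(3) of the definition), the projection $\pi: X\times\R \to X$ is $D$-cohomologically smooth with $\omega_\pi \simeq 1_{X\times\R}[1]$, and hence $\pi^!(F) \simeq \pi^*(F)[1]$. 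Combining the adjunction $\pi_! \dashv \pi^!$ with this shift identification produces a left adjoint $\pi_\sharp := \pi_!(-)[1]$ to $\pi^*$.

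Fully faithfulness of $\pi^*$ is then equivalent to the counit $\pi_\sharp \pi^* \to \id_{D(X)}$ being an equivalence. The projection formula for $\pi_!$ yields $\pi_!\,\pi^*(F) \simeq F \otimes \pi_!(1_{X\times\R})$, and base change along the Cartesian square
\[
  \begin{tikzcd}
    X\times\R \ar[r, "\pi_2"] \ar[d, "\pi"'] & \R \ar[d, "\pi_\R"] \\
    X \ar[r, "\pi_X"'] & \pt
  \end{tikzcd}
\]
gives $\pi_!(1_{X\times\R}) = \pi_!\,\pi_2^*(1_\R) \simeq \pi_X^*\,\pi_{\R,!}(1_\R)$. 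Thus the entire question is reduced to a single computation of $\pi_{\R,!}(1_\R) \in D(\pt)$.

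By \cref{lem:compact-supp-cohomology-agree}, $\pi_{\R,!}(1_\R)$ is canonically identified with $\Gamma_c(\R, \underline{1_{D(\pt)}})$. I would compute this by combining the localization cofiber sequence
\[
  \Gamma_c(\R, \underline{1_{D(\pt)}}) \to \Gamma(\S^1, \underline{1_{D(\pt)}}) \to 1_{D(\pt)}
\]
attached to the one-point compactification $\R \hookrightarrow \S^1$, with the standard splitting $\Gamma(\S^1, \underline{1_{D(\pt)}}) \simeq 1_{D(\pt)} \oplus 1_{D(\pt)}[-1]$, which follows from \cref{prop:cohomology-equivalence} and the $\Sp$-linear decomposition $\Sigma^\infty_+ \S^1 \simeq \S \oplus \S[1]$. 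This gives $\pi_{\R,!}(1_\R) \simeq 1_{D(\pt)}[-1]$, and substituting back produces $\pi_\sharp\pi^*(F) \simeq (F \otimes 1_X[-1])[1] \simeq F$, so that the counit is an equivalence and $\pi^*$ is fully faithful.

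The main obstacle is precisely this last computation: a tempting shortcut using the counit $\pi_{\R,!}\,\pi_\R^!(1) \to 1$ is circular, since that counit being an equivalence is, up to a shift, exactly the fully faithfulness of $\pi^*$ we are trying to prove. Routing through \cref{lem:compact-supp-cohomology-agree} circumvents this by reducing the identity $\pi_{\R,!}(1_\R) \simeq 1_{D(\pt)}[-1]$ to a universal sheaf-theoretic input, which then propagates uniformly to every continuous six-functor formalism.
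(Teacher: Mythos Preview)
Your proposal is correct and follows essentially the same strategy as the paper's proof. Both arguments transfer the cohomological smoothness of $\pi_\R$ to $D$ via \cref{prop:R1-cohomologically-smooth} and \cref{prop:universal-cohomologically-smooth}, reduce to the computation $\pi_{\R,!}(1_\R)\simeq 1_{D(\pt)}[-1]$ via the cohomology comparison (your \cref{lem:compact-supp-cohomology-agree}, the paper's \cref{prop:cohomology-equivalence}), and then conclude. The only cosmetic differences are that you verify the counit of the left adjunction $\pi_\sharp \dashv \pi^*$ using the projection formula directly, whereas the paper verifies the unit of $\pi^*\dashv\pi_*$ via the internal-$\underline{\Hom}$ adjunction $\underline{\Hom}(\pi_! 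A,B)\simeq \pi_*\underline{\Hom}(A,\pi^! B)$; and you spell out $\Gamma_c(\R,\underline{1})\simeq 1[-1]$ through the $S^1$ localization sequence, while the paper simply cites the value. Both routes share the same informality of not explicitly tracking that the produced isomorphism agrees with the unit/counit map, but this is the standard level of rigor for such arguments and is resolved by the naturality of the projection formula (reducing to the single counit $\pi_{\R,!}\pi_\R^! 1 \to 1$).
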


\begin{proof}
  It is equivalent to show the unit $\id_{D(X)} \to \pi_* \pi^*$ is an equivalence. Consider the pullback square
  \[\begin{tikzcd}
    {X\times \R^1} & {\R^1} \\
    X & \pt
    \arrow["p", from=1-1, to=1-2]
    \arrow["\pi", from=1-1, to=2-1]
    \arrow["\lrcorner"{anchor=center, pos=0.125}, draw=none, from=1-1, to=2-2]
    \arrow["{\pi'}", from=1-2, to=2-2]
    \arrow["{p'}", from=2-1, to=2-2]
  \end{tikzcd}\]
  By the stability of cohomological smoothness under base change, together with \cref{prop:R1-cohomologically-smooth,prop:universal-cohomologically-smooth}, the projection $\pi: X\times\R \to X$ is cohomologically smooth. Moreover, we have
  \[
    \pi^! (-) \simeq \pi^! 1_{D(X)} \otimes \pi^* (-) \simeq p^* \pi'^! 1_{D(\pt)} \otimes \pi^* (-) \simeq \pi^* (-)[1].
  \]
  On the other hand, by the projection formula,
  \[
    \pi_! 1_{D(X\times\R)} \simeq \pi_! p^* 1_{D(\R)} \simeq p'^* \pi'_! 1_{D(\R)},
  \]
  and by \cref{prop:cohomology-equivalence},
  \[
    \pi'_! 1_{D(\R)} \simeq \pi'_! \pi'^* 1_{D(\pt)} \simeq \Gamma_{\mathrm{c}}(\R, \underline{\S})\otimes 1_{D(\pt)} \simeq  1_{D(\pt)}[-1].
  \]
  Hence,
  \[
    \pi_! 1_{D(X\times\R)} \simeq  1_{D(X)}[-1].
  \]
  Now for any $F\in D(X)$, we have
  \[\begin{aligned}
    F \simeq \underline{\Hom}(1_{D(X)}[-1], F[-1]) &\simeq \underline{\Hom}(\pi_! 1_{D(X\times\R)}, F[-1])\\
    &\overset{(*)}{\simeq} \pi_* \underline{\Hom}(1_{D(X\times\R)}, \pi^! F[-1]) \label{eq:lineA}\\
    &\simeq \pi_* \pi^! F[-1]\\
    &\simeq \pi_* \pi^* F
  \end{aligned}\]
  where the equivalence $(*)$ follows from the tensor-Hom adjunction and the projection formula. This completes the proof.
\end{proof} 
\section{Localizing invariant} \label{sec:localizing-invariant}

In this section, we study the values of (continuous) localizing invariants on continuous six-functor formalisms. Localizing invariants were originally introduced on $\Cat^{\ex}_\infty$ by Blumberg–Gepner–Tabuada \cite{Blumberg_2013} to characterize higher algebraic $K$-theory, and have more recently been extended to $\Cat^{\dual}_\infty$ by Efimov \cite{efimov2025ktheorylocalizinginvariantslarge}.

We begin by recalling the definitions and results that will be needed below. 

\begin{definition}
  A \emph{Verdier sequence} in $\Cat^{\ex}_\infty$ is a sequence with vanishing composite
  \[
    \mathcal{C} \xrightarrow{i} \mathcal{D} \xrightarrow{p} \mathcal{E}
  \]
  which is a short exact sequence (i.e., both a fiber and cofiber sequence) in $\Cat^{\ex}_\infty$. The exact functor $i$ is called a \emph{Verdier inclusion}, and $p$ a \emph{Verdier projection}.

  We call a Verdier sequence \emph{left-split} (resp. \emph{right-split}) if $p$ admits a left (resp. right) adjoint, and \emph{split} if both adjoints exist.
\end{definition}

To determine whether a functor $f:\mathcal{C} \to \mathcal{D}$ fits into a Verdier sequence, we have the following criterion for Verdier projections and inclusions, adapted from \cite[Corollary A.1.7, Proposition A.1.9]{calmès2025hermitianktheorystableinftycategories}.

\begin{proposition}
  Let $f:\mathcal{C} \to \mathcal{D}$ be an exact functor of stable $\infty$-categories. Then
  \begin{itemize}
    \item
      $f$ is a Verdier inclusion if and only if it is fully faithful and its essential image in $\mathcal{D}$ is closed under retracts.
    \item
      $f$ is a Verdier projection if and only if it is a Dwyer-Kan localization, i.e., there is a collection of morphisms $W$ in $\mathcal{C}$ such that $f^* : \Fun(\mathcal{D}, \mathcal{E}) \to \Fun(\mathcal{C}, \mathcal{E})$ is fully-faithful for any $\infty$-category $\mathcal{E}$ with essential image functors $\mathcal{C} \to \mathcal{E}$ which invert $W$.
  \end{itemize}
\end{proposition}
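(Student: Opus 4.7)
The plan is to verify the four implications one at a time, using the definition of a Verdier sequence as a short exact sequence in $\Cat^{\ex}_\infty$, together with the universal property of the Verdier/Dwyer--Kan quotient. Morally the content of the proposition is the familiar ``thick subcategory'' and ``calculus of fractions'' characterization from triangulated category theory, lifted to the $\infty$-categorical setting; so my strategy is to reduce each direction to a matching of universal properties, and only then to verify the resulting stability/exactness statements at the level of higher coherences.

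For the Verdier inclusion characterization, the forward direction is essentially formal: if $f:\mathcal{C}\to\mathcal{D}$ is the fiber of a Verdier projection $p:\mathcal{D}\to\mathcal{E}$ in $\Cat^{\ex}_\infty$, then the fiber is modelled by the full stable subcategory of $\mathcal{D}$ on the objects $X$ with $p(X)\simeq 0$, so $f$ is fully faithful; and retracts of zero objects are zero, so its essential image is closed under retracts in $\mathcal{D}$. For the converse, given $\mathcal{C}\hookrightarrow\mathcal{D}$ fully faithful with essential image closed under retracts, I would construct the candidate quotient $\mathcal{D}/\mathcal{C}$ as the Dwyer--Kan localization of $\mathcal{D}$ at the class $W$ of morphisms whose fiber lies in $\mathcal{C}$, equip it with the induced stable structure, and verify by a universal property argument that the sequence $\mathcal{C}\to\mathcal{D}\to\mathcal{D}/\mathcal{C}$ is exact in $\Cat^{\ex}_\infty$. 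The closed-under-retracts hypothesis is crucial here: without it, a priori the fiber of $\mathcal{D}\to\mathcal{D}/\mathcal{C}$ would recover the idempotent-closure of $\mathcal{C}$ in $\mathcal{D}$ rather than $\mathcal{C}$ itself.

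For the Verdier projection characterization, the forward direction consists in showing that if $p:\mathcal{D}\to\mathcal{E}$ is the cofiber of a Verdier inclusion with kernel $\mathcal{C}$, then $p$ exhibits $\mathcal{E}$ as the Dwyer--Kan localization of $\mathcal{D}$ at the class $W$ of $p$-equivalences; concretely, $W$ coincides with those morphisms whose fiber (equivalently cofiber) lies in $\mathcal{C}$, and any exact functor out of $\mathcal{D}$ killing $\mathcal{C}$ inverts $W$. For the converse, given an exact $p$ which is a Dwyer--Kan localization at some $W$, set $\mathcal{C}:=\mathrm{fib}(p)$ in $\Cat^{\ex}_\infty$ and check that $\mathcal{C}\to\mathcal{D}\to\mathcal{E}$ is a Verdier sequence by matching the universal property of the cofiber in $\Cat^{\ex}_\infty$ with the universal property of inverting $W$; one may first reduce to the case where $W$ is saturated (closed under cofibers and retracts), which does not change the localization.

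The main obstacle is the second half of each direction, where one must simultaneously control the $\infty$-categorical Dwyer--Kan localization and the stable structure: namely, one has to show that the localization of $\mathcal{D}$ at $W$ inherits a stable enhancement for which the localization functor is exact, and that the resulting universal property matches the one in $\Cat^{\ex}_\infty$ rather than in $\Cat_\infty$. In practice this can be organized by identifying the Dwyer--Kan quotient with the Verdier quotient via a two-step calculus of fractions argument, or by invoking the fact that $\Cat^{\ex}_\infty\hookrightarrow\Cat_\infty$ creates the relevant colimits of localization type. Since this is precisely the analysis carried out in Appendix A of \cite{calmès2025hermitianktheorystableinftycategories}, I would expect the cleanest write-up to simply package the above verifications as consequences of \cite[Corollary A.1.7, Proposition A.1.9]{calmès2025hermitianktheorystableinftycategories}.
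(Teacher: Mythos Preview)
Your proposal is correct and lands on exactly the same approach as the paper: the paper does not give an independent proof of this proposition at all, but simply records it as a restatement of \cite[Corollary A.1.7, Proposition A.1.9]{calmès2025hermitianktheorystableinftycategories}. Your sketch of the four implications is a faithful unpacking of what those references establish, and your concluding sentence---that the cleanest write-up is to cite those results directly---is precisely what the paper does.
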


\begin{definition}[Localizing invariant]
  Let $\mathcal{E}$ be a stable $\infty$-category. A \emph{localizing invariant} with values in $\mathcal{E}$ is a functor $F: \Cat^\ex_\infty \to \mathcal{E}$ satisfying the following properties:
  \begin{enumerate}
    \item
      $F(*) = 0$.
    \item
      $F$ sends Verdier sequences to exact sequences in $\mathcal{E}$.
    \item
      $F$ is invariant under idempotent completion.
  \end{enumerate}
  We say that $F$ is \emph{finitary} if it additionally preserves filtered colimits. We write
  \[
    \Loc_\omega(\mathcal{E}) \subseteq \Loc(\mathcal{E}) \subseteq \Fun(\Cat^\ex_\infty, \mathcal{E}).
  \]
  for the full subcategories of finitary and of all localizing invariants, respectively.
\end{definition}

\begin{remark}
  Due to idempotence invariance, localizing invariants are determined by their values on idempotent-complete stable $\infty$-categories, i.e. on $\Cat^{\perf}_\infty$. Moreover, \cite[Proposition 3.12]{Saunier_2023} implies that, upon restricting to the full subcategory $\Cat^{\perf}_\infty$, one recovers exactly the notion of localizing invariant considered in \cite[\Sec 4]{efimov2025ktheorylocalizinginvariantslarge}.
\end{remark}

\begin{remark}
  Let
  \[
    \mathcal{C} \xrightarrow{i} \mathcal{D} \xrightarrow{p} \mathcal{E}
  \]
  be a split Verdier sequence. Then, by \cite[Lemma A.2.8]{calmès2025hermitianktheorystableinftycategories}, $i$ is fully faithful and admits both left and right adjoints $i^L \dashv i \dashv i^R$, and $p$ admits both left and right adjoints $p^L \dashv p \dashv p^R$ which are both fully faithful. Moreover, the sequence
  \[
    \mathcal{C} \xleftarrow{i^L} \mathcal{D} \xleftarrow{p^L} \mathcal{E}
  \]
  formed by passing to left adjoints is a right split Verdier sequence and the same holds for passing to right adjoints. As a consequence, localizing invariants take split Verdier sequences to split exact sequences.
\end{remark}

\begin{remark}
  The localizing invariance implies the additivity, i.e., for any localizing invariant $F: \Cat^\ex_\infty \to \mathcal{E}$, there is a natural equivalence
  \[
    F(\Fun(\Delta^1, \mathcal{C})) \simeq F(\mathcal{C}) \times F(\mathcal{C}).
  \]
  It follows from the split Verdier sequence \cite[Example A.2.13]{calmès2025hermitianktheorystableinftycategories}
  \[
    \mathcal{C} \xrightarrow{c\mapsto \id_{\mathcal{C}}} \Fun(\Delta^1, \mathcal{C}) \xrightarrow{\mathrm{cofib}} \mathcal{C}.
  \]
\end{remark}

\begin{example}
  (Non-connective) algebraic $K$-theory is a finitary localizing invariant (see \cite[Theorem 1.3]{Blumberg_2013}).
\end{example}

\begin{example}
  Topological Hochschild homology $\THH$ and topological cyclic homology $\TC$ are localizing invariants (see \cite[Proposition 10.2]{Blumberg_2013}\cite[Theorem 3.15]{Saunier_2023}). In addition, since $\THH$ commutes with filtered colimits, it is in fact a finitary localizing invariant. By contrast, $\TC$ does not preserves filtered colimits\footnote{Thanks to Liam Keenan for pointing this out.}. Indeed, there is a canonical morphism
  \[
    \TC(\mathcal{C})[1/p] \to \TC(\mathcal{C}[1/p])
  \]
  for a prime $p\in\Z$, which would be an equivalence if $\TC$ was finitary. Taking $\mathcal{C} = D^{\perf}(\F_p)$, the full subcategory of perfect complexes in $D(\F_p)$, one sees that $\TC(\F_p)[1/p]$ is non-trivial as $\TC_\ast(\F_p) \simeq \Z_p[\epsilon]/\epsilon^2$ with $|\epsilon| = 1$ (see \cite[Corollary IV.4.10]{nikolaus2018topologicalcyclichomology}).
\end{example}

Localizing invariants can only take nontrivial values on ``small'' stable $\infty$-categories. Indeed, they necessarily vanish on any stable $\infty$-category admitting countable colimits. It is a consequence of the Eilenberg swindle. Let $k: \Cat^\ex_\infty \to \mathcal{E}$ be a localizing invariant, and let $F,G:\mathcal{D} \to \mathcal{C}$ be two exact functors. By the additivity
\[
  k(F\sqcup G) \simeq k(F) + k(G).
\]
Assume $\mathcal{C}$ has countable colimits and take $\mathcal{D} = \mathcal{C}$. Consider the endofunctor
\[
  F:\mathcal{C} \to \mathcal{C} \quad X\mapsto \coprod_{\N} X.
\]
Since $F\sqcup \id_{\mathcal{C}} \simeq F$, applying $k$ yields $k(F) \simeq k(F\sqcup \id_{\mathcal{C}}) \simeq k(F) + \id_{\mathcal{E}}$ which forces $\id_{\mathcal{E}}=0$ on $k(\mathcal{C})$. Hence $k(\mathcal{C})$ is trivial.

To extend the theory to “large’’ stable $\infty$-categories, we now recall Efimov’s notion of the continuous localizing invariant for dualizable stable $\infty$-categories. Intuitively, continuity means a localizing invariant is determined by the behavior on compact objects and commutes with passage to $\Ind$-completions. Formally, one obtains a commutative diagram
\[\begin{tikzcd}
  {\Cat^{\ex}_\infty} & {\mathcal{E}} \\
  {\Cat^{\dual}_\infty}
  \arrow["F", from=1-1, to=1-2]
  \arrow["\Ind"', from=1-1, to=2-1]
  \arrow["{F^{\cont}}"', from=2-1, to=1-2]
\end{tikzcd}\]
in which $F: \Cat^\ex_\infty \to \mathcal{E}$ is a localizing invariant and $F^\cont: \Cat^\dual_\infty \to \mathcal{E}$ denotes its (unique) continuous extension. The notion of a continuous localizing invariant $\Cat^\dual_\infty \to \mathcal{E}$ is defined analogously to the case for $\Cat^\ex_\infty$.

\begin{definition}[Continuous localizing invariant]
  Let $\mathcal{E}$ be a stable $\infty$-category. A \emph{continuous localizing invariant} with values in $\mathcal{E}$ is a functor $F: \Cat^\dual_\infty \to \mathcal{E}$ satisfying the following properties:
  \begin{enumerate}
    \item
      $F(*) = 0$.
    \item
      $F$ sends short exact sequences in $\Cat^\dual_\infty$ to exact sequences in $\mathcal{E}$.
  \end{enumerate}
  We say that $F$ is \emph{finitary} if it additionally preserves filtered colimits. We write
  \[
    \Loc^\cont_\omega(\mathcal{E}) \subseteq \Loc^\cont(\mathcal{E}) \subseteq \Fun(\Cat^\dual_\infty, \mathcal{E}).
  \]
  for the full subcategories of finitary and of all continuous localizing invariants, respectively.
\end{definition}

The key tool in constructing continuous localizing invariants is the Calkin category\footnote{Specifically, we refer to the continuous $\omega_1$-Calkin category as defined in \cite{efimov2025ktheorylocalizinginvariantslarge}. Since we will only work with the continuous version, we henceforth drop that adjective and refer simply to the Calkin category.}, as defined in \cite[Definition 1.60]{efimov2025ktheorylocalizinginvariantslarge}.

\begin{definition}
  Let $\mathcal{C}$ be a dualizable stable $\infty$-category. The \emph{Calkin category} $\Calk(\mathcal{C})$ is the category of compact objects in the Verdier quotient $\Ind(\mathcal{C}^{\omega_1}) / \mathcal{C} \simeq \ker(\colim: \Ind(\mathcal{C}) \to \mathcal{C})$, i.e.,
  \[
    \Calk(\mathcal{C}) = \big( \Ind(\mathcal{C}^{\omega_1}) / \mathcal{C} \big)^{\omega}.
  \]
\end{definition}

Note that $\Calk(\mathcal{C})$ is idempotent-complete. Moreover, one has a short exact sequence\footnote{This short exact sequence can be viewed as a 2-term resolution of a dualizable stable $\infty$-category by compactly generated stable $\infty$-categories.} in $\Cat^\dual_\infty$
\begin{equation}\label{equ:calkin-localization-sequence}
  \mathcal{C} \xrightarrow{\hat{y}} \Ind(\mathcal{C}^{\omega_1}) \to \Ind(\Calk(\mathcal{C})).
\end{equation}
It follows that the Calkin construction is functorial. Indeed, given a strongly continuous functor $F: \mathcal{C} \to \mathcal{D}$ between dualizable stable $\infty$-categories, one obtains a commutative diagram
\[\begin{tikzcd}
  {\mathcal{C}} & {\Ind(\mathcal{C}^{\omega_1})} & {\Ind(\Calk(\mathcal{C}))} \\
  {\mathcal{D}} & {\Ind(\mathcal{D}^{\omega_1})} & {\Ind(\Calk(\mathcal{D}))}
  \arrow["{\hat{y}}", from=1-1, to=1-2]
  \arrow["F"', from=1-1, to=2-1]
  \arrow[from=1-2, to=1-3]
  \arrow[from=1-2, to=2-2]
  \arrow[from=1-3, to=2-3]
  \arrow["{\hat{y}}", from=2-1, to=2-2]
  \arrow[from=2-2, to=2-3]
\end{tikzcd}\]
Passing to compact objects in the rightmost column then defines the functor
\[
  \Calk(F): \Calk(\mathcal{C}) \to \Calk(\mathcal{D}).
\]
Hence, the functor
\[
  \Calk: \Cat^\dual_\infty \to \Cat^\perf_\infty
\]
is well defined.

\begin{proposition}[{\cite[Proposition 1.63, Proposition 1.73]{efimov2025ktheorylocalizinginvariantslarge}}] \label{prop:Calkin-exact}
  The Calkin construction $\Calk: \Cat^\dual_\infty \to \Cat^\perf_\infty$ preserves both short exact sequences and filtered colimits.
\end{proposition}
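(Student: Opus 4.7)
The plan is to derive both statements from the defining short exact sequence \eqref{equ:calkin-localization-sequence} by comparing the Calkin construction with two more tractable functors, namely the $\omega_1$-compact subcategory $(-)^{\omega_1}\colon \Cat^\dual_\infty \to \Cat^\perf_\infty$ and the Ind-completion $\Ind\colon \Cat^\perf_\infty \to \Cat^\dual_\infty$. The key auxiliary input is that both functors, and hence their composite $\Ind((-)^{\omega_1})$, preserve short exact sequences and filtered colimits. For $\Ind$ this is standard: it is a left adjoint landing in compactly generated stable $\infty$-categories and takes Verdier sequences of idempotent-complete small categories to short exact sequences in $\Cat^\dual_\infty$. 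For $(-)^{\omega_1}$, the preservation of filtered colimits uses that colimits in $\Cat^\dual_\infty$ are computed by passing to right adjoints in $\PrR$, so that any $\omega_1$-compact object in the colimit arises up to retracts from some finite stage; the preservation of short exact sequences uses that strongly continuous functors preserve $\omega_1$-compactness and that a Verdier quotient in $\Cat^\dual_\infty$ restricts, after idempotent completion, to a Verdier quotient of the $\omega_1$-compact subcategories.

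Granting this, I would first handle the short exact sequence statement. Given a short exact sequence $\mathcal{C}\to\mathcal{D}\to\mathcal{E}$ in $\Cat^\dual_\infty$, assembling the defining sequence \eqref{equ:calkin-localization-sequence} naturally in each of $\mathcal{C}, \mathcal{D}, \mathcal{E}$ produces a $3\times 3$ diagram in $\Cat^\dual_\infty$ whose top two rows are short exact (the first by hypothesis, the second by the auxiliary step applied to $\Ind((-)^{\omega_1})$) and whose three columns are short exact by construction. The $3\times 3$-lemma for short exact sequences in stable $\infty$-categories forces the bottom row
\[
  \Ind(\Calk(\mathcal{C}))\to\Ind(\Calk(\mathcal{D}))\to\Ind(\Calk(\mathcal{E}))
\]
to be short exact as well. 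Passing to compact objects, which preserves short exact sequences on the subcategory of compactly generated stable $\infty$-categories, yields the desired short exact sequence $\Calk(\mathcal{C})\to\Calk(\mathcal{D})\to\Calk(\mathcal{E})$ in $\Cat^\perf_\infty$.

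For filtered colimits, the same bookkeeping is one level simpler: applied to a filtered diagram $\{\mathcal{C}_i\}$ with colimit $\mathcal{C}$ in $\Cat^\dual_\infty$, the defining sequence \eqref{equ:calkin-localization-sequence} and the preservation properties of $\Ind((-)^{\omega_1})$ together identify the colimit of the sequences with the defining sequence of $\Calk(\mathcal{C})$, and since $(-)^\omega$ commutes with filtered colimits of compactly generated stable $\infty$-categories one concludes $\colim \Calk(\mathcal{C}_i) \simeq \Calk(\mathcal{C})$ in $\Cat^\perf_\infty$. The principal obstacle lies in the auxiliary claim for $(-)^{\omega_1}$: unlike the classical $(-)^\omega$ on compactly generated categories, identifying the $\omega_1$-compact objects in a Verdier quotient or a filtered colimit inside $\Cat^\dual_\infty$ is not formal and requires a careful factorisation/retract argument exploiting the countable colimit structure on $\omega_1$-compact objects. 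This is precisely the technical heart of the statement, and is the content of the results cited from Efimov's paper.
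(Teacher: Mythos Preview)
The paper does not supply its own proof of this proposition; it simply records the statement and cites Efimov's \cite[Propositions 1.63 and 1.73]{efimov2025ktheorylocalizinginvariantslarge}. Your outline is therefore already more detailed than what the paper offers, and the reduction you describe---organising the defining sequences \eqref{equ:calkin-localization-sequence} into a $3\times 3$ diagram in $\Cat^{\dual}_\infty$ and reading off exactness of the bottom row, then passing to compact objects---is indeed the shape of Efimov's argument.

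Two points deserve care. First, your invocation of ``the $3\times 3$-lemma for short exact sequences in stable $\infty$-categories'' is imprecise: $\Cat^{\dual}_\infty$ is not stable. What you actually use is that short exact sequences in $\Cat^{\dual}_\infty$ are simultaneously fiber and cofiber sequences, and that (co)fibers commute with (co)fibers; from this the $3\times 3$ statement follows formally, but you should say so rather than appeal to stability. Second, and more substantively, you explicitly defer the behaviour of $(-)^{\omega_1}$ on short exact sequences and filtered colimits to ``the results cited from Efimov's paper.'' But Propositions 1.63 and 1.73 in Efimov are precisely the statements about $\Calk$ you are trying to prove, not auxiliary statements about $(-)^{\omega_1}$; the $(-)^{\omega_1}$ claims appear earlier in Efimov and require genuine work (the retract/factorisation arguments you allude to). So as written your proposal is a correct reduction, but it is not a self-contained proof---which, to be fair, is exactly the status of the paper's treatment as well.
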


We are now ready to define the continuous extension of localizing invariants.

\begin{definition} \label{def:continuous-localizing-invariant-extension}
  Let $\mathcal{C}$ be a dualizable $\infty$-category and $\mathcal{E}$ a stable $\infty$-category. Given a localizing invariant $F: \Cat^\ex_\infty \to \mathcal{E}$, we define a functor $F^\cont: \Cat^\dual_\infty \to \mathcal{E}$ by
  \[
    F^\cont(\mathcal{C}) = \Omega F(\Calk(\mathcal{C})).
  \]
\end{definition}

\begin{proposition}[{\cite[Proposition 4.6, Proposition 4.8]{efimov2025ktheorylocalizinginvariantslarge}}]
  Let $\mathcal{E}$ and $F$ be as in \cref{def:continuous-localizing-invariant-extension}. The functor $F^\cont: \Cat^\dual_\infty \to \mathcal{E}$ is a localizing invariant. Moreover, if $F$ is finitary, then $F^\cont$ is also finitary.
\end{proposition}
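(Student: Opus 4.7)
The plan is to prove both statements as essentially formal consequences of \cref{prop:Calkin-exact} (the Calkin construction preserves short exact sequences and filtered colimits), combined with the fact that $F$ is a localizing invariant on $\Cat^{\ex}_\infty$ and that $\Omega$ is an exact equivalence on the stable $\infty$-category $\mathcal{E}$. The factorization
\[
    F^{\cont} = \Omega \circ F \circ \Calk
\]
makes the verification of each axiom a direct diagram chase, so the proof reduces to checking that each of the three functors behaves correctly with respect to the relevant structure.

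First I would verify vanishing: since $\Calk(*) = *$ and $F(*)=0$ by hypothesis, we get $F^{\cont}(*) = \Omega F(\Calk(*)) = \Omega F(*) = 0$. Next, given a short exact sequence $\mathcal{C}' \to \mathcal{C} \to \mathcal{C}''$ in $\Cat^{\dual}_\infty$, \cref{prop:Calkin-exact} yields a short exact sequence
\[
    \Calk(\mathcal{C}') \to \Calk(\mathcal{C}) \to \Calk(\mathcal{C}'')
\]
in $\Cat^{\perf}_\infty$, which is in particular a Verdier sequence in $\Cat^{\ex}_\infty$. Applying the localizing invariant $F$ produces an exact sequence $F(\Calk(\mathcal{C}')) \to F(\Calk(\mathcal{C})) \to F(\Calk(\mathcal{C}''))$ in $\mathcal{E}$. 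Since $\mathcal{E}$ is stable, the loop functor $\Omega:\mathcal{E}\to\mathcal{E}$ is an exact self-equivalence, and hence preserves exact sequences; applying it gives the desired exact sequence $F^{\cont}(\mathcal{C}') \to F^{\cont}(\mathcal{C}) \to F^{\cont}(\mathcal{C}'')$.

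For the finitary statement, I would note that filtered colimits in $\Cat^{\dual}_\infty$ are preserved by $\Calk$ (by \cref{prop:Calkin-exact}), by $F$ (by the finitariness hypothesis), and by $\Omega$ (again because $\Omega$ is an equivalence on $\mathcal{E}$ and in particular preserves all small colimits). Composing these three preservation statements gives that $F^{\cont}$ commutes with filtered colimits, as required.

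The main potential obstacle is not in this proof itself, but in the input \cref{prop:Calkin-exact}: the exactness of $\Calk$ on short exact sequences in $\Cat^{\dual}_\infty$ is the substantive ingredient, since it requires controlling the interaction between the Verdier quotient $\Ind(\mathcal{C}^{\omega_1})/\mathcal{C}$ and passage to compact objects, and ensuring that compactness is preserved along both maps of the sequence (\ref{equ:calkin-localization-sequence}). Once that is granted, the present proposition is essentially formal, and the appearance of the $\Omega$ shift in \cref{def:continuous-localizing-invariant-extension} is explained precisely by the sequence (\ref{equ:calkin-localization-sequence}) together with $F$ vanishing on the compactly generated middle term $\Ind(\mathcal{C}^{\omega_1})$ up to the appropriate suspension. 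I would not need to re-examine that normalization here, since it is baked into the definition.
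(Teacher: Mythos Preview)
Your proof is correct and takes essentially the same approach as the paper: both deduce the result formally from \cref{prop:Calkin-exact} via the factorization $F^{\cont} = \Omega \circ F \circ \Calk$, using that $F$ is localizing and $\Omega$ is an exact self-equivalence of $\mathcal{E}$. The paper's one-line proof additionally cites the stability of short exact sequences in $\Cat^{\dual}_\infty$ under filtered colimits, but your explicit step-by-step verification makes this extra input unnecessary.
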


\begin{proof}
  It follows from \cref{prop:Calkin-exact} and the fact that filtered colimit of short exact sequences in $\Cat^\dual_\infty$ is also a short exact sequence (see \cite[Proposition 1.67]{efimov2025ktheorylocalizinginvariantslarge}).
\end{proof}

\begin{proposition}[{\cite[Proposition 4.7]{efimov2025ktheorylocalizinginvariantslarge}}]
  Let $F: \Cat^\ex_\infty \to \mathcal{E}$ be a localizing invariant and $\mathcal{C}$ be a compactly generated $\infty$-category. Then
  \[
    F^\cont(\mathcal{C}) \simeq F(\mathcal{C}^\omega).
  \]
  In particular, there is an equivalence $F^\cont\circ\Ind \simeq F$ of functors $\Cat^\ex_\infty \to \mathcal{E}$.
\end{proposition}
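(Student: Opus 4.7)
The plan is to apply $F$ to the short exact sequence~(\ref{equ:calkin-localization-sequence}) specialised to a compactly generated $\mathcal{C}$, and then kill the middle term using the Eilenberg swindle. Everything rests on first unpacking $\Calk(\mathcal{C})$ concretely in the compactly generated setting.

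For compactly generated $\mathcal{C}=\Ind(\mathcal{C}^\omega)$, I would verify that the functor $\hat{y}\colon\mathcal{C}\to\Ind(\mathcal{C}^{\omega_1})$ coincides with the Ind-extension of the fully faithful inclusion $\mathcal{C}^\omega\hookrightarrow\mathcal{C}^{\omega_1}$; this is routine, as both are left adjoints of colimit-preserving functors that agree on $\mathcal{C}^\omega$. Consequently the sequence~(\ref{equ:calkin-localization-sequence}) is the Ind-completion of a Verdier sequence
\[
  \mathcal{C}^\omega \hookrightarrow \mathcal{C}^{\omega_1} \twoheadrightarrow \mathcal{Q}
\]
of small idempotent-complete stable $\infty$-categories, where $\mathcal{Q}$ denotes the idempotent completion of the Verdier quotient $\mathcal{C}^{\omega_1}/\mathcal{C}^\omega$. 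Under the standard equivalence between idempotent-complete small stable categories and compactly generated stable $\infty$-categories via $\Ind$, passing to compact objects on the right identifies $\Calk(\mathcal{C})\simeq\mathcal{Q}$.

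Applying $F$ to this Verdier sequence in $\Cat^{\ex}_\infty$ yields a fiber sequence
\[
  F(\mathcal{C}^\omega) \to F(\mathcal{C}^{\omega_1}) \to F(\Calk(\mathcal{C}))
\]
in $\mathcal{E}$. Since $\omega_1$-compact objects are closed under countable colimits, $\mathcal{C}^{\omega_1}$ admits countable coproducts, so the Eilenberg swindle recalled just above forces $F(\mathcal{C}^{\omega_1})=0$. The fiber sequence then collapses to give
\[
  F(\mathcal{C}^\omega)\simeq\Omega F(\Calk(\mathcal{C}))=F^{\cont}(\mathcal{C}),
\]
which is the first assertion. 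For the second, apply this to $\mathcal{C}=\Ind(\mathcal{D})$ for $\mathcal{D}\in\Cat^{\ex}_\infty$: then $\Ind(\mathcal{D})^\omega$ is the idempotent completion of $\mathcal{D}$, and idempotent invariance of $F$ yields $F^{\cont}(\Ind(\mathcal{D}))\simeq F(\mathcal{D})$.

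The main obstacle I anticipate is the compatibility claimed in the first step: identifying the abstract left adjoint $\hat{y}$ with the naive Ind-extension of $\mathcal{C}^\omega\hookrightarrow\mathcal{C}^{\omega_1}$, and checking that $\Ind$ carries Verdier sequences of small idempotent-complete stable $\infty$-categories to short exact sequences in $\Cat^{\dual}_\infty$ in a way compatible with taking compact objects. Both facts are folklore, but the bookkeeping around the $\omega_1$-compact subcategory and its interaction with left adjoints and Verdier quotients is the only place where genuine care is required; once it is in hand the Eilenberg swindle finishes the proof in one line.
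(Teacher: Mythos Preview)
Your proposal is correct and follows essentially the same route as the paper: identify the Calkin exact sequence~(\ref{equ:calkin-localization-sequence}) for compactly generated $\mathcal{C}$ as the $\Ind$-completion of the Verdier sequence $\mathcal{C}^\omega \to \mathcal{C}^{\omega_1} \to \Calk(\mathcal{C})$, apply $F$, and use the Eilenberg swindle to kill $F(\mathcal{C}^{\omega_1})$. You supply more detail than the paper (the identification of $\hat y$, the explicit idempotent-completion argument for the ``in particular''), but the strategy is identical.
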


\begin{proof}
  The short exact sequence (\ref{equ:calkin-localization-sequence}) is the $\Ind$-construction of the short exact sequence (in $\Cat^\perf$)
  \[
    \mathcal{C}^\omega \to \mathcal{C}^{\omega_1} \to \Calk(\mathcal{C}).
  \]
  Since $\mathcal{C}^{\omega_1}$ admits countable colimits, the Eilenberg swindle implies that $F(\mathcal{C}^{\omega_1})\simeq 0$. The result follows by the localizing property of $F$.
\end{proof}

\begin{example}
  We define the continuous algebraic $K$-theory $\mathrm{K}^\cont: \Cat^\dual_\infty \to \Sp$ by extending the (non-connective) algebraic $K$-theory $\mathrm{K}: \Cat^\ex_\infty \to \Sp$. In particular, for any ring $R$ one recovers the classical algebraic $K$-theory via
  \[
    \mathrm{K}(R) \simeq \mathrm{K}\bigl(D^{\perf}(R)\bigr) \simeq \mathrm{K}^{\cont}\bigl(D(R)\bigr),
  \]
  where $D^{\perf}(R)\subseteq D(R)$ denotes the full subcategory of perfect complexes, i.e., the compact objects in the derived category $D(R)$.
\end{example}

\begin{theorem}[{\cite[Theorem 4.10]{efimov2025ktheorylocalizinginvariantslarge}}] \label{thm:equivalence-loc-and-loc-cont}
  Let $\mathcal{E}$ be a stable $\infty$-category. The restriction along $\Ind : \Cat^{\ex}_\infty \to \Cat^{\dual}_\infty$ induces equivalences
  \[
    \Loc^{\cont}(\mathcal{E}) \xrightarrow{\simeq} \Loc(\mathcal{E})
  \]
  and
  \[
    \Loc^{\cont}_\omega(\mathcal{E}) \xrightarrow{\simeq} \Loc_\omega(\mathcal{E}).
  \]
  The inverse is given by the assignment $F \mapsto F^\cont$.
\end{theorem}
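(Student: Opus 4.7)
The preceding material already provides the functor $F \mapsto F^{\cont}$ from $\Loc(\mathcal{E})$ to $\Loc^{\cont}(\mathcal{E})$ together with one half of the desired equivalence, namely $F^{\cont}\circ\Ind \simeq F$ for every $F\in \Loc(\mathcal{E})$. My plan is to produce the opposite composite: for each continuous localizing invariant $G\in \Loc^{\cont}(\mathcal{E})$ I must construct a natural equivalence $(G\circ\Ind)^{\cont} \simeq G$. Unpacking \cref{def:continuous-localizing-invariant-extension}, this amounts to exhibiting, naturally in a dualizable stable $\infty$-category $\mathcal{C}$, an equivalence
\[
  G(\mathcal{C}) \simeq \Omega\, G(\Ind(\Calk(\mathcal{C}))).
\]

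The approach is to apply $G$ to the short exact sequence (\ref{equ:calkin-localization-sequence}) in $\Cat^{\dual}_{\infty}$,
\[
  \mathcal{C} \xrightarrow{\hat{y}} \Ind(\mathcal{C}^{\omega_1}) \to \Ind(\Calk(\mathcal{C})),
\]
obtaining the fiber sequence
\[
  G(\mathcal{C}) \to G(\Ind(\mathcal{C}^{\omega_1})) \to G(\Ind(\Calk(\mathcal{C})))
\]
in $\mathcal{E}$. Everything then reduces to the vanishing $G(\Ind(\mathcal{C}^{\omega_1})) \simeq 0$, after which the claimed equivalence falls out by taking loops. I would establish this vanishing via the Eilenberg swindle: because $\mathcal{C}^{\omega_1}$ is closed under countable coproducts inside $\mathcal{C}$ (being the subcategory of $\omega_1$-compact objects), the countable direct sum extends to a strongly continuous endofunctor $T:\Ind(\mathcal{C}^{\omega_1}) \to \Ind(\mathcal{C}^{\omega_1})$ satisfying $\id \oplus T \simeq T$. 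The additivity of $G$ on exact endofunctors, which is derived from applying the localizing axiom to the split Verdier sequence $\mathcal{D}\to\Fun(\Delta^1,\mathcal{D})\to\mathcal{D}$, then gives $G(T) \simeq G(\id) + G(T)$, forcing $G(\id_{\Ind(\mathcal{C}^{\omega_1})})=0$ and hence $G(\Ind(\mathcal{C}^{\omega_1}))\simeq 0$.

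Naturality in both $\mathcal{C}$ and $G$ is automatic from the functoriality of $\Ind$, $\Calk$, and the sequence (\ref{equ:calkin-localization-sequence}). The finitary claim $\Loc^{\cont}_{\omega}(\mathcal{E}) \simeq \Loc_{\omega}(\mathcal{E})$ restricts cleanly once one notes, using \cref{prop:Calkin-exact}, that $\Calk$ and $\Ind$ both preserve filtered colimits. I expect the main difficulty to lie in setting up the Eilenberg swindle rigorously within $\Cat^{\dual}_{\infty}$: one must verify that the countable direct sum on $\Ind(\mathcal{C}^{\omega_1})$ is genuinely a morphism in $\Cat^{\dual}_{\infty}$ (strongly continuous, not merely colimit-preserving), and that the additivity argument based on $\mathcal{D}\to\Fun(\Delta^1,\mathcal{D})\to\mathcal{D}$ admits a faithful dualizable analogue in which this sequence remains short exact. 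Once these functorial subtleties are handled, the remainder of the argument is essentially formal.
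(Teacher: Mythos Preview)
The paper does not supply its own proof of this theorem; the result is simply cited from Efimov without further argument. Your proposal is correct and is essentially the standard proof.

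The concerns you flag at the end dissolve once you note that $G(\Ind(\mathcal{C}^{\omega_1})) = (G\circ\Ind)(\mathcal{C}^{\omega_1})$ and that $G\circ\Ind\in\Loc(\mathcal{E})$ (since $\Ind$ takes Verdier sequences in $\Cat^{\perf}_\infty$ to short exact sequences in $\Cat^{\dual}_\infty$). The Eilenberg swindle for ordinary localizing invariants, already recorded in the paper, then applies directly to $\mathcal{C}^{\omega_1}$, which admits countable coproducts; there is no need to rebuild additivity inside $\Cat^{\dual}_\infty$. If you prefer to argue internally to $\Cat^{\dual}_\infty$, your sketch also goes through: $\Ind$ of any exact functor between small stable $\infty$-categories preserves compact objects and is therefore strongly continuous, and for $\mathcal{D}=\Ind(\mathcal{C}^{\omega_1})$ the split sequence $\mathcal{D}\to\Fun(\Delta^1,\mathcal{D})\to\mathcal{D}$ is the $\Ind$ of the corresponding split Verdier sequence on $\mathcal{C}^{\omega_1}$, hence remains short exact in $\Cat^{\dual}_\infty$.
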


We proceed to study the value of a finitary continuous localizing invariant applied to a continuous six-functor formalism of interest, following the strategy of \cite[\Sec 3.6]{krause2024sheaves}.

\begin{lemma} \label{lem:dualizable-verdier-sequence}
  Let $D\in \SixFF(\LCH)^{\cont}$. Then for each closed immersion $i: Z\hookrightarrow X$ with open complement $j: U\hookrightarrow X$, the sequence
  \[
    D(U) \xrightarrow{j_!} D(X) \xrightarrow{i^*} D(Z)
  \]
  is a short exact sequence in $\Cat^{\dual}_\infty$.
\end{lemma}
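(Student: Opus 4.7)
The strategy is to first produce the sequence as a split Verdier sequence in $\Cat^{\ex}_\infty$ using the canonical descent axiom, and then promote it to a short exact sequence in $\Cat^{\dual}_\infty$ by checking strong continuity of the maps, which the six-functor structure provides essentially for free.

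First, I would invoke the canonical descent axiom \cref{def:continuous-six-functor}(1)(b) to obtain the Cartesian square
\[
  \begin{tikzcd}
    D(Z) \ar{r}{i_*} \ar{d} & D(X) \ar{d}{j^*}\\
    0 \ar{r}                & D(U)
  \end{tikzcd}
\]
in $\Cat_\infty^{\ex}$, which in particular exhibits $i_*$ as fully faithful. Since $j^*$ already admits both adjoints $j_!\dashv j^*\dashv j_*$ from the six-functor structure, I can argue exactly as in \cref{rem:verdier-sequence-from-localization}: by \cite[Lemma A.2.8]{calmès2025hermitianktheorystableinftycategories} the square is also coCartesian, so
\[
  D(Z)\xrightarrow{i_*} D(X)\xrightarrow{j^*} D(U)
\]
is a split Verdier sequence in $\Cat^{\ex}_\infty$. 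Passing to left adjoints (which all exist: $i^*\dashv i_*$ and $j_!\dashv j^*$ are supplied by the formalism) converts this into the split Verdier sequence
\[
  D(U)\xrightarrow{j_!} D(X)\xrightarrow{i^*} D(Z).
\]

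To promote this to $\Cat^{\dual}_\infty$, I would verify strong continuity of the two maps: the right adjoint $j^*$ of $j_!$ has further right adjoint $j_*$, and the right adjoint $i_*$ of $i^*$ has further right adjoint $i^!$ (available because the closed immersion $i$ is automatically proper, so the six-functor structure supplies $i^!$). The underlying categories $D(U),D(X),D(Z)$ are dualizable by the running hypothesis on $D$, and for split Verdier sequences of dualizable categories with strongly continuous maps this suffices to land in a short exact sequence in $\Cat^{\dual}_\infty$; I would invoke \cite[Proposition 1.67]{efimov2025ktheorylocalizinginvariantslarge} (or the surrounding discussion in \emph{loc.\ cit.}) for this upgrade. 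The main obstacle is precisely this last promotion step — comparing the Verdier-sequence picture in $\Cat^{\ex}_\infty$ with the short-exact-sequence picture in $\Cat^{\dual}_\infty$ — but it is rendered essentially routine by the fact that the six-functor structure delivers the full adjoint tower $j_!\dashv j^*\dashv j_*$ and $i^*\dashv i_*\dashv i^!$ that certifies strong continuity in both directions.
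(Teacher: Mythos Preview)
Your proposal is correct and follows essentially the same route as the paper: obtain the split Verdier sequence $D(Z)\xrightarrow{i_*}D(X)\xrightarrow{j^*}D(U)$ from localization, pass to left adjoints, then promote to $\Cat^{\dual}_\infty$. Two small points: after passing to left adjoints the resulting sequence is only \emph{right}-split (since $i^*$ need not admit a left adjoint), which is all you need; and the reference \cite[Proposition 1.67]{efimov2025ktheorylocalizinginvariantslarge} concerns filtered colimits of exact sequences rather than the promotion step you want---the paper instead invokes \cite[Proposition 1.65]{efimov2025ktheorylocalizinginvariantslarge} (cofibers in $\Cat^{\dual}_\infty$ agree with those in $\PrL_{\st}$) together with \cite[Proposition 1.84]{efimov2025ktheorylocalizinginvariantslarge} (the fiber in $\Cat^{\dual}_\infty$ is the maximal dualizable subcategory of the underlying fiber, hence here all of $D(U)$), which are exactly the ingredients your strong-continuity check sets up.
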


\begin{proof}
  Begin with the split Verdier sequence in $\Cat^{\ex}_\infty$ (see \cref{rem:verdier-sequence-from-localization})
  \[
    D(Z) \xrightarrow{i_*} D(X) \xrightarrow{j^*} D(U),
  \]
  and pass to left adjoints to obtain
  \[
    D(U) \xrightarrow{j_!} D(X) \xrightarrow{i^*} D(Z).
  \]
  By definition, each term of this sequence is both stable and dualizable. Cofibers in $\Cat^{\dual}_\infty$ may be computed on the underlying presentable stable $\infty$-category by \cite[Proposition 1.65]{efimov2025ktheorylocalizinginvariantslarge}, and then, by passing to right adjoints, identify with fibers in the underlying stable $\infty$-categories. Moreover, the fiber of any morphism in $\Cat^{\dual}_\infty$ is the maximal dualizable full subcategory of the underlying fiber by \cite[Proposition 1.84]{efimov2025ktheorylocalizinginvariantslarge}. These observations together imply that the above sequence is exact in $\Cat^{\dual}_\infty$, as claimed.
\end{proof}

\begin{lemma} \label{lem:sheaf-of-localizing-invariant}
  Let $F\in\Loc^{\cont}_\omega(\mathcal{E})$ be a finitary continuous localizing invariant valued in a stable $\infty$-category $\mathcal{E}$, and $D\in \SixFF(\LCH)^{\cont}$ be regarded as a covariant functor via extension functoriality. Then the functor
  \[
    \LCH_{\open} \to \mathcal{E}: X \mapsto F(D(X))
  \]
  defines a cosheaf.
\end{lemma}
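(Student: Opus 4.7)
The plan is to verify the three cosheaf axioms for $F \circ D: \LCH_{\open} \to \mathcal{E}$ in turn. The empty-space axiom is immediate: the canonical descent condition in \cref{def:continuous-six-functor} gives $D(\emptyset) = 0$, and any (continuous) localizing invariant vanishes on the zero category. For the filtered-colimit axiom, canonical descent gives $D(U) \simeq \lim D(U_i)$ in $\Cat^{\ex}_\infty$ along the right adjoints $(f_{ij})^*$. Passing to the strongly continuous left adjoints $(f_{ij})_!$, which exist because each $f_{ij}$ is an open immersion, rewrites this as a filtered colimit $D(U) \simeq \colim D(U_i)$ in $\Cat^{\dual}_\infty$, and the finitary hypothesis on $F$ yields $F(D(U)) \simeq \colim F(D(U_i))$.

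For the Mayer--Vietoris axiom associated to opens $U, V \subseteq X$ with $X = U \cup V$, I would set $Z = X \setminus V$ and exploit the fact that $Z$ is simultaneously closed in $X$ (with open complement $V$) and closed in $U$ (with open complement $U \cap V$). \Cref{lem:dualizable-verdier-sequence} then supplies two short exact sequences in $\Cat^{\dual}_\infty$,
\[
  D(U \cap V) \xrightarrow{j_!} D(U) \xrightarrow{i^*} D(Z), \qquad D(V) \xrightarrow{j_!} D(X) \xrightarrow{i^*} D(Z),
\]
which fit into a commutative square via the extension functoriality of the open immersions $U \cap V \hookrightarrow V$ and $U \hookrightarrow X$; commutativity of the left square follows from the functoriality of $j_!$ under composition of open immersions. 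Applying $F$ produces a morphism of cofiber sequences in $\mathcal{E}$, so by stability of $\mathcal{E}$ the square
\[
  \begin{tikzcd}
    F(D(U \cap V)) \ar[r] \ar[d] & F(D(U)) \ar[d] \\
    F(D(V)) \ar[r] & F(D(X))
  \end{tikzcd}
\]
is a pushout as soon as the induced map between the two copies of $F(D(Z))$ on the right is an equivalence.

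I expect the chief obstacle to lie precisely in this last identification, which I plan to reduce to the mixed Beck--Chevalley equivalence built into the six-functor formalism (see \cref{def:bcfun}). Concretely, applied to the Cartesian square given by $Z = U \times_X Z \to U \xrightarrow{j} X$ and $Z \xrightarrow{i} X$, the Beck--Chevalley compatibility yields $i^* j_! \simeq i_U^*$, which identifies the comparison map on cofibers with the identity on $F(D(Z))$. Once this naturality is in hand, the Mayer--Vietoris square is a genuine pushout and all three cosheaf axioms have been verified.
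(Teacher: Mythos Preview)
Your proposal is correct and follows the same strategy as the paper: verify the empty-space and filtered-colimit axioms directly, then deduce the Mayer--Vietoris pushout by comparing the two short exact sequences in $\Cat^{\dual}_\infty$ supplied by \cref{lem:dualizable-verdier-sequence} with common cofiber $D(U\setminus V)=D(Z)$. The paper simply labels the right vertical arrow ``$=$'' without further comment, so your base-change justification is more careful than strictly necessary; note also that ordinary \'etale base change (rather than mixed Beck--Chevalley) already suffices here, or even more directly the factorization $i^* = i_U^*\, k^*$ combined with $k^* k_! \simeq \id$ for the open immersion $k\colon U\hookrightarrow X$.
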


\begin{proof}
  By construction, the functor is reduced, i.e., it sends the empty space to a zero object, and it preserves filtered colimits. Given open subsets $U, V \subseteq X$, we obtain, by \cref{lem:dualizable-verdier-sequence}, a horizontally exact commutative diagram in $\Cat^{\dual}_\infty$
  \[\begin{tikzcd}
    {D(U\cap V)} & {D(U)} & {D(U\setminus V)} \\
    {D(V)} & {D(U\cap V)} & {D(U\setminus V)}
    \arrow[from=1-1, to=1-2]
    \arrow[from=1-1, to=2-1]
    \arrow[from=1-2, to=1-3]
    \arrow[from=1-2, to=2-2]
    \arrow["{=}", from=1-3, to=2-3]
    \arrow[from=2-1, to=2-2]
    \arrow[from=2-2, to=2-3]
  \end{tikzcd}\]
  Applying $F$, we obtain a diagram of exact sequences in $\mathcal{E}$. Since $\mathcal{E}$ is stable, it follows that the image of the left square under $F$ is a pushout square in $\mathcal{E}$, establishing the cosheaf gluing condition.
\end{proof}

\begin{lemma} \label{lem:profinite-descent-localizing-invariant}
  Let $F\in\Loc^{\cont}_\omega(\mathcal{E})$ be a finitary continuous localizing invariant valued in a stable $\infty$-category $\mathcal{E}$, and $D\in \SixFF(\LCH)^{\cont}$. Then for a cofiltered limit of compact Hausdorff spaces $X = \lim_{i \in I} X_i$ we have an equivalence
  \[
    F(D(X)) \simeq \colim_{I} F(D(X_i)).
  \]
\end{lemma}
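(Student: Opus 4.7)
The plan is to recast the profinite descent statement for $D$ as a filtered colimit in $\Cat^{\dual}_\infty$ along pullbacks, so that the finitariness of $F$ can be applied directly.

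First I would observe that every continuous map between compact Hausdorff spaces is proper (closed subsets of compact spaces are compact, and compact subsets of Hausdorff spaces are closed). Consequently, for each transition map $f_{ij}: X_i \to X_j$ in the cofiltered system, $(f_{ij})_* = (f_{ij})_!$ is a left adjoint (to $(f_{ij})^!$), and in particular preserves colimits. Thus each pullback $(f_{ij})^*: D(X_j) \to D(X_i)$ is strongly continuous, so the family $\{D(X_i)\}$ with pullback transitions constitutes a filtered diagram indexed by $I^{\op}$ in $\Cat^{\dual}_\infty$.

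Next I would invoke the key structural fact from Efimov's theory \cite[Proposition 1.65]{efimov2025ktheorylocalizinginvariantslarge}: a filtered colimit in $\Cat^{\dual}_\infty$ along strongly continuous functors agrees, at the level of the underlying $\infty$-categories, with the cofiltered limit of the right adjoints. Combining this identification with the profinite descent clause in \cref{def:continuous-six-functor} gives
\[
  D(X) \;\simeq\; \lim\nolimits^{\Cat^{\ex}_\infty}_{I} D(X_i) \;\simeq\; \colim\nolimits^{\Cat^{\dual}_\infty}_{I^{\op}} D(X_i),
\]
where the colimit is taken along the strongly continuous pullback functors $(f_{ij})^*$.

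Finally, since $F$ is finitary in the sense of preserving filtered colimits in $\Cat^{\dual}_\infty$, applying it to this presentation yields
\[
  F(D(X)) \;\simeq\; F\!\left(\colim\nolimits^{\Cat^{\dual}_\infty}_{I^{\op}} D(X_i)\right) \;\simeq\; \colim_{I^{\op}} F(D(X_i)) \;\simeq\; \colim_{I} F(D(X_i)),
\]
as desired. The main obstacle is the second step, namely correctly matching the profinite descent statement (phrased as a $\Cat^{\ex}_\infty$-limit along the non-colimit-preserving pushforwards) with a filtered $\Cat^{\dual}_\infty$-colimit along the pullbacks. This pivots on using properness of every map $f_{ij}$ to verify that the right adjoints $(f_{ij})_*$ preserve colimits, so that Efimov's comparison applies; all remaining manipulations are then formal.
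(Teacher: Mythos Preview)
Your proposal is correct and is essentially an unpacking of the paper's one-line proof, which reads: ``This follows directly from the profinite descent property of $D$ together with the finitary assumption of $F$, after passing to the corresponding left adjoint.'' The step you flag as the main obstacle---passing from the $\Cat^{\ex}_\infty$-limit along $(f_{ij})_*$ to a filtered $\Cat^{\dual}_\infty$-colimit along $(f_{ij})^*$ using properness of the transition maps---is exactly the ``passing to the corresponding left adjoint'' that the paper leaves implicit.
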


\begin{proof}
  This follows directly from the profinite descent property of $D$ together with the finitary assumption of $F$, after passing to the corresponding left adjoint.
\end{proof}

\begin{theorem} \label{thm:localizing-invariant-sp-six-functor}
  Let $F\in\Loc^{\cont}_\omega(\Sp)$ be a finitary continuous localizing invariant, and $D\in \SixFF(\LCH)^{\cont}$. Then for any locally compact Hausdorff space $X$, there is an equivalence
  \[
    F(D(X)) \simeq \Gamma_{\mathrm{c}}\Big( X, \underline{F(D(\pt))} \Big).
  \]
\end{theorem}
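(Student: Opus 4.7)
The strategy closely mirrors the argument of \cref{lem:compact-supp-cohomology-agree-sp} (and its generalization \cref{lem:compact-supp-cohomology-agree}) that identifies $\pi_{X,!}\pi_X^* 1_{D(\pt)}$ with compactly supported sections of a constant sheaf. The key inputs are already at hand: \cref{lem:sheaf-of-localizing-invariant} provides the cosheaf property of $U \mapsto F(D(U))$, and \cref{lem:profinite-descent-localizing-invariant} supplies profinite descent. These play the same structural roles as the localization and profinite descent of $D$ did in \cref{sec:universal-six-functor-formalism}, which is what makes the same outline go through.

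First, fix $X\in\LCH$ and let $\mathcal{F}: \Open(X) \to \Sp$ be the cosheaf $U \mapsto F(D(U))$ furnished by \cref{lem:sheaf-of-localizing-invariant}. Applying Verdier duality (\cref{thm:verdier-duality}) yields a sheaf $\D(\mathcal{F}) \in \Shv(X; \Sp)$, and the inverse equivalence characterizes $\mathcal{F}(U)$ as the compactly supported sections of $\D(\mathcal{F})$ on $U$. Taking $U = X$ gives
\[
  F(D(X)) \simeq \mathcal{F}(X) \simeq \Gamma_{\mathrm{c}}(X, \D(\mathcal{F})),
\]
so it suffices to exhibit an equivalence $\D(\mathcal{F}) \simeq \underline{F(D(\pt))}$ in $\Shv(X; \Sp)$.

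Next, I would compute $\D(\mathcal{F})$ on compact subsets. For compact $K \subseteq X$, Verdier duality gives $\D(\mathcal{F})(K) \simeq \mathcal{F}(X)/\mathcal{F}(X\setminus K)$, and \cref{lem:dualizable-verdier-sequence} supplies the short exact sequence $D(X\setminus K) \to D(X) \to D(K)$ in $\Cat^{\dual}_{\infty}$. Applying $F$ identifies $\D(\mathcal{F})(K) \simeq F(D(K))$. Since $K$ is compact, $\pi_K: K \to \pt$ is proper, so $\pi_K^*$ is strongly continuous (its right adjoint $\pi_{K,*} = \pi_{K,!}$ admits the further right adjoint $\pi_K^!$) and thus induces a canonical map $F(D(\pt)) \to F(D(K)) \simeq \D(\mathcal{F})(K)$. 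These maps are compatible as $K$ varies and assemble into a sheaf morphism $\phi: \underline{F(D(\pt))} \to \D(\mathcal{F})$, using that sheaves on a locally compact Hausdorff space are determined by their values on compact subsets.

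Finally, I would show $\phi$ is an equivalence by the reduction scheme already used in \cref{lem:compact-supp-cohomology-agree-sp}: reduce to $X$ compact; embed $X$ as a closed subspace of a Hilbert cube $[0,1]^I$ via the Urysohn map; reduce the Hilbert cube to the finite-dimensional cubes $[0,1]^n$ using profinite descent (\cref{prop:section-profinite-descent} on the constant-sheaf side and \cref{lem:profinite-descent-localizing-invariant} on the $\D(\mathcal{F})$ side); and finally, since each $[0,1]^n$ is hypercomplete, use the hyperdescent axiom of \cref{def:continuous-six-functor} together with \cref{lem:stalkwise-check} to reduce the problem to a stalkwise check. Both stalks evaluate to $F(D(\pt))$: trivially for the constant sheaf, and for $\D(\mathcal{F})$ via one further profinite-descent computation applied to a cofiltered system of compact neighborhoods of $x$ with intersection $\{x\}$. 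The main obstacle is this last stalkwise verification, since $\D(\mathcal{F})$ admits no transparent closed form on arbitrary opens; its stalk must be extracted indirectly from values on compact neighborhoods, which is precisely where finitariness of $F$ together with \cref{lem:profinite-descent-localizing-invariant} are used crucially.
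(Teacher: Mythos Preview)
Your proposal is correct and follows essentially the same route as the paper's proof: form the cosheaf $U\mapsto F(D(U))$ via \cref{lem:sheaf-of-localizing-invariant}, pass to the Verdier-dual sheaf, identify its values on compacts as $F(D(K))$ using \cref{lem:dualizable-verdier-sequence}, build the comparison map from $\underline{F(D(\pt))}$, and prove it is an equivalence by the compact/Hilbert-cube/profinite-descent/stalkwise reduction of \cref{lem:compact-supp-cohomology-agree-sp}. The only minor remark is that the stalkwise reduction on $[0,1]^n$ uses hypercompleteness of the space (via \cref{lem:stalkwise-check} for $\Sp$-valued sheaves), not the hyperdescent axiom of $D$ from \cref{def:continuous-six-functor}; the latter is not needed here since you are comparing objects of $\Shv([0,1]^n;\Sp)$ rather than of $D([0,1]^n)$.
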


\begin{proof}
  Let $X$ be a locally compact Hausdorff space. Define the functor $\mathcal{F}: \Open(X) \to \Sp$ by
  \[
    U \mapsto F(D(U)).
  \]
  by \cref{lem:sheaf-of-localizing-invariant}, this defines a cosheaf of spectra on $X$. Applying Verdier duality (\cref{thm:verdier-duality}),
  \[
    \D: \CoShv(X; \Sp) \simeq \Shv(X; \Sp),
  \]
  we obtain a sheaf $\D(\mathcal{F})$. For any compact subset $K\subseteq X$, Verdier duality gives
  \[
    \D(\mathcal{F})(K) = \mathcal{F}(X)/\mathcal{F}(X \setminus K).
  \]
  where the value of a sheaf on a subset denotes the global sections of its restriction to that subset. To describe $\D(\mathcal{F})(K)$, consider the short exact sequence
  \[
    D(U) \xrightarrow{j_!} D(X) \xrightarrow{i^*} D(Z)
  \]
  in $\Cat^{\dual}_\infty$ by \cref{lem:dualizable-verdier-sequence} in which $i: Z\hookrightarrow X$ is a closed immersion with open complement $j: U\hookrightarrow X$. Applying the continuous localizing invariant $F$ to the above short exact sequence, we have an exact sequence in $\Sp$
  \[
    \mathcal{F}(U) \to \mathcal{F}(X) \to \mathcal{F}(Z)
  \]
  Taking $U = X \setminus K$, it follows that
  \[
    \D(\mathcal{F})(K) = \mathcal{F}(X)/\mathcal{F}(X \setminus K) \simeq \mathcal{F}(K).
  \]
  Since $K$ is compact, the unique map $\pi_K: K \to \pt$ is proper and $\pi_{K}^*: D(\pt) \to D(K)$ is strongly continuous, which gives a functor
  \[
    \mathcal{F}(\pt) \to \mathcal{F}(K)
  \]
  adjoins over to a morphism $\underline{\mathcal{F}(\pt)} \to \D(\mathcal{F})|_{K}$. As $X$ is locally compact, these local morphisms glue into a morphism of sheaves $\underline{\mathcal{F}(\pt)} \to \D(\mathcal{F})$.

  The remainder of the proof follows the same argument as in the proof of \cref{lem:compact-supp-cohomology-agree-sp}. We reproduce it here for completeness. To complete the proof, we must show that the morphism $\underline{\mathcal{F}(\pt)} \to \D(\mathcal{F})$ is an equivalence. Indeed, under Verdier duality, a sheaf $\mathcal{G}$ corresponds to the cosheaf $\D^{-1}(\mathcal{G})$ whose value on $U$ is the compactly supported sections of $\mathcal{G}$ on $U$, which then gives the desired description of $\mathcal{F}$.

  It remains to verify that $\underline{\mathcal{F}(\pt)} \to \D(\mathcal{F})$ is an equivalence. As sheaves on locally compact spaces are determined by their values on compact subsets, and constant sheaves are stable under pullback, it suffices to check the equivalence when $X$ is compact. Moreover, if the equivalence holds on a compact space, it holds on each of its closed subspaces. Now every compact Hausdorff space embeds into a Hilbert cube $[0,1]^I$ for some set $I$, via the canonical double dual map $X \to [0,1]^I$, where $I = C^0(X, [0,1])$. This map is injective by Urysohn’s Lemma and defines an embedding. Thus, it suffices to prove the claim for $[0,1]^I$.

  The Hilbert cube is an inverse limit of finite-dimensional cubes $[0,1]^n$. By \cref{prop:section-profinite-descent} together with \cref{lem:profinite-descent-localizing-invariant}, it is enough to verify the claim for each $[0,1]^n$. These spaces are hypercomplete, the equivalence of sheaves can be checked on stalks. Since both $\underline{\mathcal{F}(\pt)}$ and $\D(\mathcal{F})$ have stalks equivalent to $\underline{\mathcal{F}(\pt)}$, the morphism is indeed an equivalence.
\end{proof}

\begin{theorem} \label{thm:localizing-invariant-dualizable-six-functor}
  Let $F\in\Loc^{\cont}_\omega(\mathcal{E})$ be a finitary continuous localizing invariant valued in a dualizable stable $\infty$-category $\mathcal{E}$, and $D\in \SixFF(\LCH)^{\cont}$. Then for any locally compact Hausdorff space $X$, there is an equivalence
  \[
    F(D(X)) \simeq \Gamma_{\mathrm{c}}\Big( X, \underline{F(D(\pt))} \Big).
  \]
\end{theorem}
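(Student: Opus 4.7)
The plan is to imitate the proof of \cref{thm:localizing-invariant-sp-six-functor} essentially verbatim, replacing $\Sp$ by the dualizable stable $\infty$-category $\mathcal{E}$, and to check that each ingredient used in the spectrum case remains available. Only three features of $\Sp$ were really used there: stability together with small (co)completeness (for the cosheaf construction and for Verdier duality), presentability (so that $\Shv(X; \mathcal{E})$ has the right functoriality), and the combination of profinite descent and stalkwise detection on hypercomplete spaces. All three hold for any presentable dualizable stable $\infty$-category, so the same argument should go through.

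Concretely, I would first define $\mathcal{F}: \Open(X) \to \mathcal{E}$ by $U \mapsto F(D(U))$; by \cref{lem:sheaf-of-localizing-invariant} this is an $\mathcal{E}$-valued cosheaf. Applying Verdier duality (\cref{thm:verdier-duality}, valid for any stable $\mathcal{E}$ admitting small (co)limits) yields a sheaf $\D(\mathcal{F}) \in \Shv(X; \mathcal{E})$, and the same calculation using \cref{lem:dualizable-verdier-sequence} as in the spectrum case gives $\D(\mathcal{F})(K) \simeq \mathcal{F}(K)$ for every compact $K \subseteq X$. Using that $\pi_K \colon K \to \pt$ is proper so that $\pi_{K,*} = \pi_{K,!}$, the unit produces a morphism $F(D(\pt)) \to F(D(K))$ which adjoins to $\underline{F(D(\pt))}|_K \to \D(\mathcal{F})|_K$; these glue over the locally compact $X$ into a comparison morphism $\phi \colon \underline{F(D(\pt))} \to \D(\mathcal{F})$.

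To show $\phi$ is an equivalence I would run the same telescoping reduction: first reduce to $X$ compact by evaluating on compact subsets; then embed $X$ via Urysohn into a Hilbert cube $[0,1]^I$, written as the cofiltered limit of the finite-dimensional cubes $[0,1]^n$; apply profinite descent to reduce to the cubes themselves — on the left via \cref{lem:profinite-descent-localizing-invariant} and on the right via the $\mathcal{E}$-valued analogue of \cref{prop:section-profinite-descent} — and finally appeal to the stalkwise criterion \cref{lem:stalkwise-check}, which applies because $\mathcal{E}$ is dualizable and hence compactly assembled. Both sheaves have stalk $F(D(\pt))$ at every point, whence $\phi$ is an equivalence.

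The only point requiring any real attention is the $\mathcal{E}$-valued analogue of \cref{prop:section-profinite-descent}, since the quoted statement is phrased specifically for spectra. This is not a genuine obstacle: the cited proof uses only profinite descent for $\Shv(-;\Sp)$ together with formal sheaf manipulations, and the identical argument applies with $\Sp$ replaced by $\mathcal{E}$ upon invoking \cref{prop:sheaf-profinite-descent} for $\mathcal{E}$-valued sheaves; alternatively one may deduce it directly from the tensor decomposition $\Shv(X; \mathcal{E}) \simeq \Shv(X) \otimes \mathcal{E}$ together with the spectral case. In either formulation no new input is required, and the theorem is a direct generalization of its $\Sp$-valued predecessor rather than a genuinely new argument.
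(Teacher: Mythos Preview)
Your proposal is correct and matches the paper's approach exactly: the paper's proof is a single sentence stating that the argument of \cref{thm:localizing-invariant-sp-six-functor} goes through for $\mathcal{E}$-valued sheaves, with the stalkwise equivalence verified via \cref{lem:stalkwise-check}. Your additional discussion of the $\mathcal{E}$-valued analogue of \cref{prop:section-profinite-descent} is a point the paper leaves implicit, but your justification is sound.
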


\begin{proof}
  The argument proceeds as in the proof of \cref{thm:localizing-invariant-sp-six-functor}, now working with $\mathcal{E}$-valued sheaves. The required stalkwise equivalence is then verified using \cref{lem:stalkwise-check}.
\end{proof}

As noted in \cite[Remark 3.6.17]{krause2024sheaves}, the dualizability assumption on $\mathcal{E}$ in \cref{thm:localizing-invariant-dualizable-six-functor} can be relaxed. It suffices to assume that $\mathcal{E}$ is presentable. This follows from the existence of a universal finitary localizing invariant
\[
  \mathcal{U}_{\mathrm{loc}}: \Cat^{\ex}_\infty \to \mathcal{M}_{\mathrm{loc}}
\]
where $\mathcal{M}_{\mathrm{loc}}$ is the stable $\infty$-category of noncommutative motives constructed by Blumberg–Gepner–Tabuada \cite{Blumberg_2013}. By the universal property of $\mathcal{U}_{\mathrm{loc}}$ (see \cite[Theorem 1.1]{Blumberg_2013}), any finitary localizing invariant $F: \Cat^{\ex}_\infty \to \mathcal{E}$, with $\mathcal{E}$ a presentable stable $\infty$-category, factors through $\mathcal{M}_{\mathrm{loc}}$ via a unique colimit-preserving functor $\tilde{F}: \mathcal{U}_{\mathrm{loc}} \to \mathcal{E}$. Moreover, by \cref{thm:equivalence-loc-and-loc-cont}, there exists a universal finitary continuous localizing invariant such that any continuous finitary localizing invariant factor through $\mathcal{M}_{\mathrm{loc}}$.
\[
\begin{tikzcd}
  {\Cat^{\ex}_\infty} & {\mathcal{E}} \\
  {\mathcal{M}_{\mathrm{loc}}}
  \arrow["F", from=1-1, to=1-2]
  \arrow["{\mathcal{U}_{\mathrm{loc}}}"', from=1-1, to=2-1]
  \arrow["{\tilde{F}}"', from=2-1, to=1-2]
\end{tikzcd}
\qquad
\begin{tikzcd}
  {\Cat^{\dual}_\infty} & {\mathcal{E}} \\
  {\mathcal{M}_{\mathrm{loc}}}
  \arrow["{F^{\cont}}", from=1-1, to=1-2]
  \arrow["{\mathcal{U}^{\cont}_{\mathrm{loc}}}"', from=1-1, to=2-1]
  \arrow["{\tilde{F}^{\cont}}"', from=2-1, to=1-2]
\end{tikzcd}
\]
Therefore, it suffices to consider the universal case, and the dualizability assumption need only be verified for $\mathcal{M}_{\mathrm{loc}}$.

\begin{theorem}[{Efimov, \cite[Remark 4.3]{efimov2025ktheorylocalizinginvariantslarge}\cite[Theorem 4.7.1]{krause2024sheaves}}]
  The stable $\infty$-category $\mathcal{M}_{\mathrm{loc}}$ is dualizable.
\end{theorem}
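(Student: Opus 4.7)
The plan is to apply criterion (2) of dualizability from the introduction, in its strongest form: I would show that $\mathcal{M}_{\mathrm{loc}}$ is itself compactly generated, which implies dualizability since a compactly generated presentable stable $\infty$-category is trivially a retract of itself in $\PrL_{\st}$.

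First, I would give an explicit Bousfield-localization description of $\mathcal{M}_{\mathrm{loc}}$. Let $\mathcal{A} := (\Cat^{\perf}_\infty)^\omega$ denote the essentially small full subcategory of compact objects of $\Cat^{\perf}_\infty$, and consider the compactly generated presentable stable $\infty$-category $\mathcal{P} := \Fun^{\oplus}(\mathcal{A}, \Sp)$ of direct-sum-preserving spectra-valued presheaves on $\mathcal{A}$; the representables $y(\mathcal{C})$ form a generating family of compact objects. Identify $\mathcal{M}_{\mathrm{loc}}$ as the Bousfield localization of $\mathcal{P}$ at the set
\[
    S = \{y(0) \to 0\} \cup \big\{ \mathrm{cofib}(y(\mathcal{C}) \to y(\mathcal{D})) \to y(\mathcal{E}) \big\}
\]
indexed by the essentially small collection of Verdier sequences $\mathcal{C} \to \mathcal{D} \to \mathcal{E}$ in $\mathcal{A}$. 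The universal finitary localizing invariant on all of $\Cat^{\perf}_\infty$ is then obtained by extending along the equivalence $\Cat^{\perf}_\infty \simeq \Ind(\mathcal{A})$ via filtered colimits, with the Verdier relations on non-compact sequences imposed automatically by the finitariness of the extension.

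The decisive observation is that every morphism in $S$ is a morphism between compact objects of $\mathcal{P}$: representables are compact in $\mathcal{P}$, and cofibers of morphisms between compact objects of a stable presentable $\infty$-category are again compact. The standard theory of Bousfield localization then gives that localizing a compactly generated presentable stable $\infty$-category at a set of morphisms between compact objects produces a compactly generated presentable stable $\infty$-category. Hence $\mathcal{M}_{\mathrm{loc}}$ is compactly generated, and in particular dualizable.

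The main obstacle is verifying the claimed localization presentation of $\mathcal{M}_{\mathrm{loc}}$, specifically the assertion that imposing Verdier relations only on compact objects suffices to encode the universal property of $\mathcal{U}_{\mathrm{loc}}$. Finitariness is exactly what makes this work: any Verdier sequence $\mathcal{C} \to \mathcal{D} \to \mathcal{E}$ in $\Cat^{\perf}_\infty$ can be written as an $\omega$-filtered colimit of Verdier sequences in $\mathcal{A}$, using that Verdier sequences in $\Cat^{\perf}_\infty$ are stable under filtered colimits together with the equivalence $\Cat^{\perf}_\infty \simeq \Ind(\mathcal{A})$. The careful bookkeeping required to check that the resulting localization represents \emph{finitary} localizing invariants, rather than all localizing invariants, is the technical heart of the argument, and is the step I would expect to consume most of the effort.
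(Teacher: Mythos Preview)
The paper does not prove this theorem; it is stated with attribution to Efimov and to Krause's lecture notes, and no argument is given in the paper itself.

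Your proposal has a genuine gap at exactly the step you flag as the crux. You claim that every Verdier sequence in $\Cat^{\perf}_\infty$ is a filtered colimit of Verdier sequences among the compact objects $\mathcal{A} = (\Cat^{\perf}_\infty)^\omega$, and you justify this by noting that Verdier sequences are closed under filtered colimits and that $\Cat^{\perf}_\infty \simeq \Ind(\mathcal{A})$. This is a non-sequitur: closure of a class of diagrams under filtered colimits does not imply that every such diagram is a filtered colimit of diagrams landing in $\mathcal{A}$. Concretely, if $\mathcal{C} \hookrightarrow \mathcal{D}$ is a Verdier inclusion and one writes $\mathcal{C} = \colim_i \mathcal{C}_i$ and $\mathcal{D} = \colim_j \mathcal{D}_j$ with $\mathcal{C}_i, \mathcal{D}_j \in \mathcal{A}$, the induced maps $\mathcal{C}_i \to \mathcal{D}_{j(i)}$ have no reason to be fully faithful or to have retract-closed image, and you offer no mechanism to arrange this. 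Without the approximation, your localizing set $S$ does not impose all the relations needed for the universal property of $\mathcal{U}_{\mathrm{loc}}$, so the resulting localization is not $\mathcal{M}_{\mathrm{loc}}$.

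This gap is not a routine technicality: your argument, if it worked, would show that $\mathcal{M}_{\mathrm{loc}}$ is compactly generated, which is a well-known hard question that Efimov's proof deliberately sidesteps. The argument in the cited references does not pass through compact generation; it establishes dualizability directly in the compactly-assembled sense (criterion (3) in the paper's notation section), essentially by producing the left adjoint $\hat{y}\colon \mathcal{M}_{\mathrm{loc}} \to \Ind(\mathcal{M}_{\mathrm{loc}})$ to the colimit functor. Your route would prove strictly more than the theorem asserts, and that surplus is exactly where it breaks.
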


Combining these results, we obtain the following generalization:

\begin{theorem} \label{thm:localizing-invariant-six-functor}
  Let $F\in\Loc^{\cont}_\omega(\mathcal{E})$ be a finitary continuous localizing invariant valued in a presentable stable $\infty$-category $\mathcal{E}$, and $D\in \SixFF(\LCH)^{\cont}$. For any locally compact Hausdorff space $X$, there is an equivalence
  \[
    F^{\cont}(D(X)) \simeq \Gamma_{\mathrm{c}}\Big(X, \underline{F^{\cont}(D(\pt))}\Big).
  \]
\end{theorem}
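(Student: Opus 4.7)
The plan is to bootstrap from the dualizable case already established in \cref{thm:localizing-invariant-dualizable-six-functor} by routing through the universal noncommutative motive, as sketched in the remarks preceding the statement. Any finitary continuous localizing invariant $F$ valued in a presentable stable $\infty$-category $\mathcal{E}$ admits a unique factorization $F \simeq \tilde F\circ \mathcal{U}^{\cont}_{\mathrm{loc}}$, where $\mathcal{U}^{\cont}_{\mathrm{loc}}:\Cat^{\dual}_\infty\to\mathcal{M}_{\mathrm{loc}}$ is the universal finitary continuous localizing invariant (obtained from the Blumberg--Gepner--Tabuada motive via \cref{thm:equivalence-loc-and-loc-cont}) and $\tilde F:\mathcal{M}_{\mathrm{loc}}\to\mathcal{E}$ is colimit-preserving.

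Since $\mathcal{M}_{\mathrm{loc}}$ is dualizable by Efimov's theorem recalled above, \cref{thm:localizing-invariant-dualizable-six-functor} applies directly to $\mathcal{U}^{\cont}_{\mathrm{loc}}$ and yields an equivalence
\[
  \mathcal{U}^{\cont}_{\mathrm{loc}}(D(X)) \simeq \Gamma_{\mathrm{c}}\Big(X,\,\underline{\mathcal{U}^{\cont}_{\mathrm{loc}}(D(\pt))}\Big).
\]
Applying $\tilde F$ to both sides, the left-hand side becomes $F(D(X))$ by the factorization, so the proof reduces to commuting $\tilde F$ past the composite operation $\Gamma_{\mathrm{c}}(X,\underline{(-)})$.

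The main (and only real) obstacle is this last commutation. I would establish it using the tensor-product decomposition $\Shv(X;\mathcal{E})\simeq\Shv(X;\Sp)\otimes\mathcal{E}$ of \cite{volpe2023operationstopology}. Under this decomposition, both the constant-sheaf functor $\pi_X^*$ and the proper pushforward $\pi_{X,!}$ arise by tensoring their $\Sp$-valued counterparts with $\mathrm{id}_{\mathcal{E}}$, so the endofunctor $\Gamma_{\mathrm{c}}(X,\underline{(-)})$ on $\mathcal{M}_{\mathrm{loc}}$ identifies with the $\Sp$-action of the spectrum $\Gamma_{\mathrm{c}}(X,\underline{\S})$ on $\mathcal{M}_{\mathrm{loc}}$ via its canonical presentable stable structure. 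Because $\tilde F$ is a colimit-preserving functor between presentable stable $\infty$-categories, it is automatically $\Sp$-linear and hence commutes with this action, giving
\[
  \tilde F\,\Gamma_{\mathrm{c}}(X,\underline{M}) \simeq \Gamma_{\mathrm{c}}(X,\underline{\tilde F(M)})
\]
for every $M\in\mathcal{M}_{\mathrm{loc}}$. Taking $M = \mathcal{U}^{\cont}_{\mathrm{loc}}(D(\pt))$ and combining with the displayed equivalence from \cref{thm:localizing-invariant-dualizable-six-functor} yields the desired formula. The genuine content of the argument lies entirely in the dualizable case already proved; the present theorem is merely its categorification along a colimit-preserving functor, and no further descent, profinite, or Verdier-duality arguments are needed.
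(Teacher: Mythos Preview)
Your proposal is correct and follows exactly the approach the paper takes: factor $F$ through the universal motive $\mathcal{U}^{\cont}_{\mathrm{loc}}$, invoke Efimov's dualizability of $\mathcal{M}_{\mathrm{loc}}$ to apply \cref{thm:localizing-invariant-dualizable-six-functor}, and then push forward along the colimit-preserving $\tilde F$. In fact you supply more detail than the paper does, which simply writes ``combining these results'' and leaves the commutation $\tilde F\,\Gamma_{\mathrm{c}}(X,\underline{(-)})\simeq\Gamma_{\mathrm{c}}(X,\underline{\tilde F(-)})$ implicit; your justification via the $\Sp$-linearity of colimit-preserving functors between presentable stable $\infty$-categories and the identification of $\Gamma_{\mathrm{c}}(X,\underline{(-)})$ with the action of the spectrum $\Gamma_{\mathrm{c}}(X,\underline{\S})$ is the right way to fill this in.
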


\begin{corollary}[Efimov]
  For any locally compact Hausdorff space $X$ there is an equivalence
  \[
    \mathrm{K}^{\cont}(\Shv(X; \mathcal{C})) \simeq \Gamma_{\mathrm{c}}\Big(X, \underline{\mathrm{K}^{\cont}(\mathcal{C})}\Big).
  \]
\end{corollary}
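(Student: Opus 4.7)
The plan is to obtain this corollary as a direct specialization of \cref{thm:localizing-invariant-six-functor}, by feeding in the concrete choices $F^{\cont} = \mathrm{K}^{\cont}$ and $D = \Shv(-;\mathcal{C})$. Under the hypothesis that $\mathcal{C}$ is a dualizable stable $\infty$-category (carrying a symmetric monoidal structure, as recorded in the corollary in the introduction), both inputs lie in the requisite categories, so the work reduces to checking the hypotheses.

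First, I would verify that $\mathrm{K}^{\cont}$ lies in $\Loc^{\cont}_{\omega}(\Sp)$. Non-connective algebraic $K$-theory $\mathrm{K}: \Cat^{\ex}_\infty \to \Sp$ is a finitary localizing invariant by the Blumberg--Gepner--Tabuada theorem (already quoted in \cref{sec:localizing-invariant}), and $\mathrm{K}^{\cont}$ is defined precisely as its continuous extension via $F^{\cont}(\mathcal{C}) = \Omega F(\Calk(\mathcal{C}))$, which preserves finitariness. The target $\Sp$ is presentable stable, so the hypothesis of \cref{thm:localizing-invariant-six-functor} on $\mathcal{E}$ is satisfied.

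Next, I would verify that $\Shv(-;\mathcal{C}) \in \SixFF(\LCH)^{\cont}$. By \cref{prop:prop:free-forgetful-six-functor-adjunction}, for any symmetric monoidal dualizable presentable stable $\infty$-category $\mathcal{C}$, the assignment $X \mapsto \Shv(X;\mathcal{C})$ is the free continuous six-functor formalism generated by $\mathcal{C}$ and hence belongs to $\SixFF(\LCH)^{\cont}$. Evaluation at the point gives $\Shv(\pt;\mathcal{C}) \simeq \mathcal{C}$, so $D(\pt) = \mathcal{C}$.

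Plugging these two inputs into \cref{thm:localizing-invariant-six-functor} yields the equivalence
\[
  \mathrm{K}^{\cont}(\Shv(X;\mathcal{C})) \simeq \Gamma_{\mathrm{c}}\Big(X,\,\underline{\mathrm{K}^{\cont}(\mathcal{C})}\Big),
\]
as desired. There is no real obstacle here: all the structural work (the cosheaf property of $U \mapsto F(D(U))$, profinite descent, Verdier duality, and the stalkwise check on the Hilbert cube) was absorbed into the proof of \cref{thm:localizing-invariant-sp-six-functor} and its generalization. The only thing worth highlighting in a one-line proof is the identification $\Shv(\pt;\mathcal{C})\simeq\mathcal{C}$ and the fact that $\mathrm{K}^{\cont}$ satisfies the hypotheses of the general theorem.
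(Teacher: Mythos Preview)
Your proposal is correct and matches the paper's approach exactly: the corollary is stated without proof in the paper, and the introduction spells out that it is obtained from \cref{thm:localizing-invariant-six-functor} by taking $F^{\cont} = \mathrm{K}^{\cont}$ and $D = \Shv(-;\mathcal{C})$, with $D(\pt) \simeq \mathcal{C}$. Your verification of the two hypotheses is the right unpacking of this specialization.
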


\begin{definition}[Motivic equivalence]
  A functor $F : \mathcal{C} \to \mathcal{D}$ in $\Cat^{\dual}_\infty$ is called a \emph{motivic equivalence} if the induced morphism
  \[
    \mathcal{U}_{\mathrm{loc}}^{\cont}\, \mathcal{C} \to \mathcal{U}_{\mathrm{loc}}^{\cont}\, \mathcal{D}
  \]
  is an equivalence in the $\infty$-category of noncommutative motives $\mathcal{M}_{\mathrm{loc}}$.
\end{definition}

\begin{corollary}
  For any locally compact Hausdorff space $X$ such that the unit $1_{D(X)}$ is compact in $D(X)$, the component at $X$ of the canonical morphism of continuous six-functor formalisms
  \[
    \Shv(-; D(\pt)) \to D
  \]
  is a motivic equivalence.
\end{corollary}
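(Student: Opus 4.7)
The plan is to apply the universal finitary continuous localizing invariant $\mathcal{U}_{\mathrm{loc}}^{\cont}: \Cat^{\dual}_\infty \to \mathcal{M}_{\mathrm{loc}}$ to the counit $\epsilon_X: \Shv(X; D(\pt)) \to D(X)$ and verify that the result is an equivalence. The compactness hypothesis on $1_{D(X)}$ is invoked precisely to ensure that $\epsilon_X$ is strongly continuous, so that it defines a morphism in $\Cat^{\dual}_\infty$ to which $\mathcal{U}_{\mathrm{loc}}^{\cont}$ can be applied: compactness of $1_{D(X)}$ is equivalent to $\pi_{X,*}$ preserving filtered colimits, hence to $\pi_X^*: D(\pt) \to D(X)$ being strongly continuous, and combined with the strong continuity of the initial morphism $[-]: \Shv(-; \Sp) \to D$, the factorization of \cref{cor:factor-initial-six-functor-map} delivers the strong continuity of $\epsilon_X$ itself.

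The core argument then proceeds via \cref{thm:localizing-invariant-six-functor}. Since both $\Shv(-; D(\pt))$ and $D$ lie in $\SixFF(\LCH)^{\cont}$ and both evaluate to $D(\pt)$ at a point, the theorem applied to $\mathcal{U}_{\mathrm{loc}}^{\cont}$ yields a pair of equivalences
\[
  \mathcal{U}_{\mathrm{loc}}^{\cont}\bigl(\Shv(X; D(\pt))\bigr) \simeq \Gamma_{\mathrm{c}}\bigl(X, \underline{\mathcal{U}_{\mathrm{loc}}^{\cont}(D(\pt))}\bigr) \simeq \mathcal{U}_{\mathrm{loc}}^{\cont}\bigl(D(X)\bigr).
\]
It remains to show that $\mathcal{U}_{\mathrm{loc}}^{\cont}(\epsilon_X)$ is identified, under these equivalences, with the identity. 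This I would extract from the naturality of the construction in the proof of \cref{thm:localizing-invariant-sp-six-functor}: for a morphism $\phi: D_1 \to D_2$ in $\SixFF(\LCH)^{\cont}$, the induced morphism of cosheaves $U \mapsto F(\phi_U)$ corresponds, under Verdier duality, to the morphism of constant sheaves $\underline{F(\phi_\pt)}$. Applied to $\phi = \epsilon$, for which $\epsilon_\pt = \id_{D(\pt)}$ (see the construction in \cref{prop:prop:free-forgetful-six-functor-adjunction}), this constant morphism is the identity, and hence $\mathcal{U}_{\mathrm{loc}}^{\cont}(\epsilon_X)$ is an equivalence in $\mathcal{M}_{\mathrm{loc}}$.

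The main obstacle is verifying the naturality of the comparison map $\underline{\mathcal{F}(\pt)} \to \D(\mathcal{F})$ in morphisms of continuous six-functor formalisms. At its heart, this comes down to naturality of the unit map of the $(\pi_K^*, \pi_{K,*})$-adjunction with respect to morphisms that commute with proper pushforwards, which is exactly the compatibility condition imposed on morphisms in $\SixFF(\LCH)^{\cont}$ by \cref{prop:continuous-six-functor-as-subcategory-of-cosys}. Should a formal naturality argument prove delicate, a backup is to reduce to the case $X = \pt$ by sheaf-theoretic methods: using canonical descent, profinite descent, and hyperdescent for $\mathcal{U}_{\mathrm{loc}}^{\cont}$ applied to $\Shv(-; D(\pt))$ and to $D$ — exactly as in the proofs of \cref{thm:sheaf-initial-six-functor} and \cref{thm:localizing-invariant-sp-six-functor} — one checks the equivalence stalkwise on a Hilbert cube, where it becomes tautological.
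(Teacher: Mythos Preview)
Your argument for the strong continuity of $\epsilon_X$ has a genuine gap, and this is precisely where the content of the corollary lies. The claimed equivalence ``compactness of $1_{D(X)}$ $\Leftrightarrow$ $\pi_{X,*}$ preserves filtered colimits'' does not hold in general: from $\Map_{D(X)}(1_{D(X)},-)\simeq\Map_{D(\pt)}(1_{D(\pt)},\pi_{X,*}(-))$ you can only conclude that $\pi_{X,*}$ preserves filtered colimits if $1_{D(\pt)}$ is a compact generator of $D(\pt)$, which is nowhere assumed. You also assert without justification that $[-]_X:\Shv(X;\Sp)\to D(X)$ is strongly continuous; its right adjoint sends $d$ to the sheaf $U\mapsto\Map_{D(X)}(i_!1_{D(U)},d)$, and this preserves filtered colimits only if each $i_!1_{D(U)}$ is compact, i.e.\ each $1_{D(U)}$ is compact, which does not follow from compactness of $1_{D(X)}$ alone (pullback along an open immersion need not preserve compact objects). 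Finally, even granting both ingredients, the factorization of \cref{cor:factor-initial-six-functor-map} expresses $[-]$ as a composite \emph{through} $\epsilon$, so strong continuity of the composite says nothing about $\epsilon$; and if instead you mean the description $\epsilon_X=\mu\circ([-]_X\otimes\pi_X^*)$ from \cref{prop:free-forgetful-cosys-adjunction}, you would additionally need the multiplication $\mu$ to be strongly continuous, which you do not address.

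The paper avoids all of this by computing the right adjoint of $\epsilon_X$ directly as $d\mapsto\bigl(U\mapsto\underline{\Hom}_{D(X)}(i_!i^*1_{D(X)},d)\bigr)$ and then using the projection formula to rewrite the value as $\underline{\Hom}_{D(X)}(1_{D(X)},\,d\otimes i_*1_{D(U)})$. In this form the dependence on $d$ passes through a tensor product followed by $\underline{\Hom}(1_{D(X)},-)$, so compactness of $1_{D(X)}$ alone suffices. Once strong continuity is in hand, the paper concludes exactly as you do by invoking \cref{thm:localizing-invariant-six-functor}; your discussion of naturality is in fact more careful than the paper's, which leaves it implicit (and the subsequent remark in the paper confirms that the object-level equivalence holds unconditionally, so the corollary is really only about producing a map in $\Cat^{\dual}_\infty$).
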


\begin{proof}
  The right adjoint of the canonical morphism $\Shv(-; D(\pt)) \to D$ is given by a functor $D \to \Shv(-; D(\pt))$ which assigns to each object $d\in D(X)$ the presentable sheaf
  \[
    \big( U \xhookrightarrow{i} X \big) \,\mapsto\, \underline{\Hom}_{D(X)}(i_! i^* 1_{D(X)}, d).
  \]
  By the projection formula, we have
  \[
    \underline{\Hom}_{D(X)}(i_! i^* 1_{D(X)}, d) \,\simeq\, \underline{\Hom}_{D(X)}(1_{D(X)}, i_* i^* d) \,\simeq\, \underline{\Hom}_{D(X)}(1_{D(X)}, d \otimes i_* 1_{D(U)}).
  \]
  Since $1_{D(X)}$ is compact the right adjoint preserves filtered colimits and hence all colimits. It follows that the canonical morphism $\Shv(-; D(\pt)) \to D$ is strongly continuous. The desired motivic equivalence then follows immediately from \cref{thm:localizing-invariant-six-functor}.
\end{proof}

\begin{remark}
  In fact, \cref{thm:localizing-invariant-six-functor} yields a slightly stronger statement. For any continuous six-functor formalism $D$, there is a natural equivalence
  \[
    \mathcal{U}_{\mathrm{loc}}^{\cont}\, \Shv(-; D(\pt)) \,\simeq\, \mathcal{U}_{\mathrm{loc}}^{\cont}\, D
  \]
   In particular, the canonical morphism $\Shv(-; D(\pt)) \to D$ may be regarded as a motivic equivalence of continuous six-functor formalisms.
\end{remark} 
\section{Outlook}

In the proof of the universal property of sheaves (\cref{thm:sheaf-initial-six-functor}), we apply profinite descent only to cofiltered limits of intervals $[0,1]^n$, and hyperdescent is used only for these spaces. Consequently, the universal property of sheaves extends to a slightly larger $\infty$-category of six-functor formalisms.

By contrast, in motivic homotopy theory, \cite[Proposition 5.21]{drew2018motivichodgemodules} shows that morphisms between coefficient systems are automatically compatible with the right adjoint $f_*$ for proper maps $f$, without requiring extra assumptions. Therefore, the universal property of the initial object in the $\infty$-category of coefficient systems lifts directly to the $\infty$-category of Nagata six-functor formalisms. This naturally leads to the following question:

\begin{question}
  Is $\Shv(-; \Sp)$ initial among all Nagata six-functor formalisms on locally compact Hausdorff spaces that satisfy only the localization property?
\end{question}

Once the universal property of sheaves has been established, it follows that for any Nagata six-functor formalism $D$, the canonical morphism $\Shv(-; D(\pt)) \to D$ is a cohomological equivalence. This raises the subsequent question:

\begin{question}
  Is the canonical map $\Shv(-; D(\pt)) \to D$ an equivalence? If not, do there exist nontrivial Nagata six-functor formalisms distinct from the sheaf, and how can one classify all Nagata six-functor formalisms?
\end{question} 

\bibliographystyle{hyperalpha}
\bibliography{references}

\end{document}